\setlist{noitemsep, topsep=4.9pt, partopsep=0pt}
\newtheorem{theorem}{Theorem}
\newtheorem{prop}[theorem]{Proposition}
\newtheorem{lemma}[theorem]{Lemma}
\newtheorem{cor}[theorem]{Corollary}
\newtheorem{conj}[theorem]{Conjecture}
\theoremstyle{definition}
\newtheorem{definition}[theorem]{Definition}
\newtheorem{example}[theorem]{Example}
\newtheorem{question}[theorem]{Question}
\theoremstyle{remark}
\newtheorem{remark}[theorem]{Remark}
\numberwithin{theorem}{section}
\numberwithin{equation}{section}
\newcommand{\R}{\mathbb{R}}
\newcommand{\C}{\mathbb{C}}
\newcommand{\Z}{\mathbb{Z}}
\newcommand{\mb}{\mathbb}
\renewcommand{\phi}{\varphi}
\newcommand{\paren}[1]{\left(#1\right)}
\newcommand{\set}[1]{\left\{#1\right\}}
\newcommand{\bracket}[1]{\left[#1\right]}
\newcommand{\abs}[1]{\left\lvert#1\right\rvert}
\newcommand{\norm}[1]{\left\lVert#1\right\rVert}
\newcommand{\inner}[1]{\left\langle#1\right\rangle}
\DeclareMathOperator{\tr}{tr}
\DeclareMathOperator{\cond}{cond}
\DeclareMathOperator{\lcm}{lcm}
\DeclareMathOperator{\supp}{supp}
\begin{document}

\title{Riesz bases of exponentials and multi-tiling in finite abelian groups}

\author{Sam Ferguson, Azita Mayeli and Nat Sothanaphan}

\date{\today}
\maketitle
 
\vspace{-3.5ex}
 
\begin{abstract}
Motivated by the open problem of exhibiting a subset of Euclidean space which has no exponential Riesz basis, we focus on exponential Riesz bases in finite abelian groups. We point out that that every subset of a finite abelian group has such a basis, removing interest in the existence question in this context. We then define tightness quantities for subsets to measure the conditioning of Riesz bases; for normalized tightness quantities, a value of one corresponds to an orthogonal basis, and a value of infinity corresponds to nonexistence of a basis. As an application, we obtain new weak evidence in favor of the open problem by giving a sequence of subsets of finite abelian groups whose tightness quantities go to infinity in the limit. We also prove that the Cartesian product of a set with a finite abelian group has the same tightness quantities as the original set. Lastly, under an additional hypothesis, explicit bounds are given for tightness quantities in terms of a subset's lowest multi-tiling level by a subgroup and its geometric configuration. This establishes a quantitative link between discrete geometry and harmonic analysis in this setting.
\end{abstract}

\vspace{-2.5ex}

\clearpage

{
\hypersetup{linkcolor=blue}
\setcounter{tocdepth}{2}
\tableofcontents
\vspace{1ex}
\listoffigures
}

\clearpage

\section{Introduction}
\label{sec:intro}

This paper develops a framework of what may be called \emph{quantitative spectrality} of subsets of finite abelian groups. We hope that this notion may help settle an unsolved problem concerning existence of Riesz bases of exponentials and lead to further exploration of the relationships between Riesz bases and multi-tiling in the Euclidean setting.

We begin by recalling some definitions that help us motivate our work. A subset $E \subseteq \R^d$ is called \emph{spectral} if there is an exponent set $\Lambda \subseteq \R^d$ such that the exponentials
$$E \ni x \mapsto \exp(2\pi i \lambda \cdot x)$$
for $\lambda \in \Lambda$ form an orthogonal basis for $L^2(E)$.
There is an analogous notion of spectrality for subsets $E$ of a finite abelian group $G$, and it is defined by replacing the above exponentials with restrictions to $E$ of homomorphisms from $G$ into the circle group $\mb{S}^1$, though we will still call these restrictions \emph{exponentials}.

The celebrated conjecture of Fuglede \cite[p.~119]{Fug} states that for a subset $E \subseteq \R^d$ of finite and positive measure, $E$ tiles $\R^d$ if and only if $E$ is spectral. We say that $E$ \emph{tiles} $\R^d$ (by translation) if there exist translates $E+t$, $t \in \R^d$, whose union is $\R^d$, up to a set of measure zero, and whose pairwise intersections are of measure zero. Despite partial progress for some subsets $E$, the conjecture was proven false by Tao \cite{Tao} in 2004, starting from the construction of a counterexample to the corresponding conjecture in finite abelian groups.

Nevertheless, connections exist between tiling and spectrality. In particular, a theorem of Fuglede \cite{Fug} asserts that the conjecture is true if, in the above definitions, the exponent set $\Lambda$ and the set of $t$ defining the translates are required to be lattices. A \emph{lattice} in $\R^d$ is an image of the additive group $\Z^d$ under an invertible linear transformation. Fuglede's theorem thus establishes a relationship between harmonic analysis and lattice or discrete geometry.

As spectral subsets are rare, their applicability has limitations. More common, and still useful, are subsets $E$ for which $L^2(E)$ admits a Riesz basis of exponentials. We say that exponentials $e_n$, $n\geq 1$, form a \emph{Riesz basis} for $L^2(E)$ if their span is dense in $L^2(E)$ and there are constants $0<A\leq B<\infty$ such that, for any complex sequence $(c_n) \in \ell^2$,
\begin{equation}
\label{eq:riesz}
A\sum |c_{n}|^{2}\leq \left\|\sum c_{n}e_{n} \right\|_{L^2(E)}^2\leq B\sum |c_n|^{2}.
\end{equation}
Equivalently, the exponentials $e_{n}$ are a Riesz basis if they form the image of an orthonormal basis $f_n$ of $L^2(E)$ under a bounded, invertible operator $T$. Riesz bases of exponentials generalize the notion of an orthogonal basis of exponentials, which corresponds to the case when $A=B$. If $L^2(E)$ admits a Riesz basis of exponentials as above, we call $E$ a \emph{Riesz set}.

Riesz sets are at least somewhat common: any finite union of ``pixels,'' $[0,1]^d+t$ for some $t \in \Z^d$, is a Riesz set. In fact, any bounded, measurable set that multi-tiles by a lattice is a Riesz set; a particularly simple proof is given by Kolountzakis \cite{Ko}. We say that $E$ \emph{multi-tiles} $\R^d$ at level $k \geq 1$ if translates $E+t$, $t \in \R^d$ exist such that almost every point of $\R^d$ is contained in exactly $k$ of these translates. Tiling means multi-tiling at level $1$.

Despite the ease with which they can be constructed, Riesz sets retain some mystery. For example, no subset of Euclidean space has ever been proved not to be a Riesz set. Olevskii and Ulanovskii \cite[p.~7]{OU} recently drew attention to this problem by emphasizing, as the first question in their collection of harmonic analysis lectures, the conjecture that the unit disc in $\R^2$ is not a Riesz set (see Question \ref{question:olevskii}).

Our goals in this paper are twofold: to make progress on resolving the above conjecture stated by Olevskii and Ulanovskii, and to more deeply understand the relationship between Riesz sets and discrete geometry in the spirit of Fuglede's theorem \cite{Fug} and Kolountzakis's simple proof \cite{Ko}. The latter goal assists us in the former.

To more clearly distinguish between various Riesz bases as we research Riesz sets, we introduce the framework of quantitative spectrality. Looking at the ratio $B/A$ in the bounds \eqref{eq:riesz}, we see that a Riesz basis requires $B/A$ to be finite, while an orthogonal basis requires this ratio to be $1$. Hence, we may think of $B/A$, which we will call the \emph{Riesz ratio} of the Riesz basis, as leading towards a quantitative measure of spectrality. Our hope is that the existence of low Riesz ratios for a set may, in at least some situations, imply that the set has a low multi-tiling level, and vice versa. Our main result is a theorem of this type in the finite abelian group setting (Thm. \ref{thm:introMainRhoUpperBd}). See Section \ref{subsec:contributions} for more details.

If a result of this type is true in the Euclidean setting, and some kind of ``continuity'' holds for the Riesz ratios (see Sec. \ref{subsec:continuityRiesz}), then the unit disc conjecture might be resolved in the following way. Approximate the unit disc by polygons which multi-tile at increasingly high levels. Successive polygons might be such that all of their Riesz ratios become arbitrarily high, which might imply that the unit disc has no finite Riesz ratio, i.e., the unit disc is not a Riesz set.

Nevertheless, such an argument, when naively applied to various concocted approximations, appears to give nonsensical results, e.g., that $[0,1]^2$ is not a Riesz set. Thus, it is unclear how far we are from the correct machinery to run this kind of argument, if it is indeed possible. But we hope that our results will provide a starting place for research in this direction.

\subsection{Our contributions}
\label{subsec:contributions}

We formulate and prove a quantitative relationship between the discrete geometry of subsets of finite abelian groups and the bases used in harmonic analysis on these subsets. Under an additional hypothesis on the subsets, which is always satisfied when the underlying groups are cyclic, we successfully establish such a relationship (Thm. \ref{thm:introMainRhoUpperBd}).

Our first result, obtained by generalizing the 2015 proof of Kolountzakis \cite{Ko} to finite abelian groups, shows that all subsets of finite abelian groups have exponential Riesz bases.

\begin{theorem}[See Cor. \ref{cor:basispairexist} and Sec. \ref{subsec:prelimGeneralFinite}]
\label{thm:introRieszExist}
Let $G$ be a finite abelian group. If $E$ is a nonempty subset of $G$, then $L^{2}(E)$ has a basis consisting of group characters of $G$, that is, $E$ has an exponential Riesz basis.
\end{theorem}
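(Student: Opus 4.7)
The plan hinges on the observation that $L^{2}(E)$ is finite-dimensional of dimension $|E|$, so any basis of characters is automatically a Riesz basis with some finite bounds $0<A\leq B<\infty$ in \eqref{eq:riesz}. This reduces the theorem to a linear-algebra statement: among the $|G|$ characters of $G$, one can find $|E|$ whose restrictions to $E$ are linearly independent.

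The first step is to show that the full family $\set{\chi|_E : \chi \in \widehat G}$ already spans $L^{2}(E)$. Given $f \in L^{2}(E)$, extend it by zero to $\tilde f \in L^{2}(G)$ and expand $\tilde f$ in the orthonormal character basis of $L^{2}(G)$; restriction back to $E$ recovers $f$ as a linear combination of the $\chi|_E$. Equivalently, any $f \in L^{2}(E)$ orthogonal to every $\chi|_E$ would give a $\tilde f \in L^{2}(G)$ orthogonal to every character, forcing $\tilde f = 0$ and hence $f = 0$. From this spanning family one extracts a basis of exactly $|E|$ characters via the standard fact that any spanning set in a finite-dimensional space contains a basis.

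The last step is to verify the Riesz bounds, and this is automatic: the map $(c_n) \mapsto \sum c_n \chi_n|_E$ is a linear isomorphism $\C^{|E|} \to L^{2}(E)$ between finite-dimensional normed spaces, so it and its inverse are bounded, supplying the required constants $A$ and $B$.

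For a more Kolountzakis-flavored rendering consistent with the stated goal of generalizing \cite{Ko}, one can instead observe that every $E \subseteq G$ multi-tiles $G$ by the subgroup $G$ itself at level $|E|$ (trivially), and then mimic the finite-abelian analog of Kolountzakis's construction, choosing one character from each of $|E|$ distinct cosets of the annihilator $G^{\perp} = \set{1}$. There is no substantive obstacle here: the content is pure linear algebra, and the only point of care is parsing what \eqref{eq:riesz} means in finite dimensions. The significance of the theorem therefore lies less in its difficulty than in what it tells us: existence of exponential Riesz bases is not a meaningful question in the finite abelian setting, which is precisely what motivates the paper's subsequent quantitative framework.
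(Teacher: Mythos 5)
Your proof is correct and follows essentially the same route as the paper: the characters form a basis of $L^{2}(G)$, their restrictions to $E$ therefore span $L^{2}(E)$ (the paper packages this as a basis-restriction proposition proved by a matrix-rank argument, whereas you use extension by zero and then extract a basis from a spanning family), and finite-dimensionality makes any basis a Riesz basis. The ``Kolountzakis-flavored'' aside is not a complete argument as stated---choosing one character from each coset of the trivial annihilator is no constraint at all, and not every choice of $|E|$ characters works (cf.\ the paper's Example of a singular pair)---but it is dispensable, since your main argument already suffices.
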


\noindent
This shows that, in a sense, trying to resolve the open problem, Question \ref{question:olevskii}, by imitating the construction of Tao's counterexample to Fuglede's conjecture is doomed to failure. Indeed, the strategy of finding a subset of a finite abelian group with no exponential Riesz basis and then ``lifting" this set to Euclidean space to solve the open problem cannot get off the ground if every subset has an exponential Riesz basis.
See Section \ref{sec:prevLit}.

While the qualitative existence question for exponential Riesz bases in finite abelian groups is thus rendered uninteresting, there remains the possibility that insight into the open problem can be had by shifting to a quantitative viewpoint. To this end, we introduce \emph{tightness quantities} and \emph{normalized tightness quantities} which, loosely speaking, attempt to measure quantitatively how close the set $E$ is to having no exponential Riesz basis. 

For an example, recall the inequalities \eqref{eq:riesz}. If $A_{\text{max}}$ denotes the largest value of $A$ for which the left inequality holds, and $B_{\text{min}}$ denotes the smallest value of $B$ for which the right inequality holds, then we can define the \emph{Riesz ratio} $\rho$ of the Riesz basis $\{f_{i}\}_{i\in I}$ to be $B_{\text{min}}/A_{\text{max}}$. Given an exponential Riesz basis $\Lambda$ of $L^{2}(E)$, where $E$ is a subset of a finite abelian group $G$, we can denote its Riesz ratio by $\rho(E,\Lambda)$. If we define
\begin{equation}
\rho(E)=\inf\{\rho(E,\Lambda):\text{ }\Lambda\text{ is an exponential Riesz basis for }E\},
\label{eq:Rieszratio}
\end{equation}
then $\rho(E)$, which we call the \emph{Riesz ratio of} $E$, is the Riesz ratio of the ``best-behaved" exponential Riesz basis of $E$. Symbolically, the expression ``$\rho(E)=\infty$'' should be interpreted to mean that ``$A_{\text{max}}=0$'' or ``$B_{\text{min}}=\infty$,'' that is, no exponential Riesz basis of $E$ exists. Thus, how small $\rho(E)$ is might be interpreted as measuring how ``tight'' $E$ is, or how far $E$ is from having no exponential Riesz basis; for this reason, we refer to the Riesz ratio \eqref{eq:Rieszratio}, and other similar quantities, as tightness quantities (see Sec. \ref{subsec:prelimTightnessQuants}). When a quantity $Q$ is normalized similarly to $\rho$, so $Q\geq 1$ and ``$Q=\infty$'' corresponds to nonexistence of exponential Riesz bases, then we call $Q$ a normalized tightness quantity (see Sec. \ref{sec:decompose}).

Motivated by the above interpretation of $\rho$ and Question \ref{question:olevskii}, we seek to construct sets $E$ for which $\rho(E)$ is large. Further development of such ideas should help us determine which sets in Euclidean space, if any, should be expected to have no exponential Riesz bases. The first new application of the theorems in this paper is that construction of a sequence of sets in finite abelian groups with arbitrarily large $\rho(E)$ is indeed possible.

\begin{theorem}[Ex. \ref{ex:condtoinfty}]
\label{thm:introRhoDiverges}
There exist a sequence of finite abelian groups $G_{n}$ and a sequence of subsets $E_{n}\subseteq G_{n}$ such that
\begin{equation}
\rho(E_{n})\to\infty
\label{eq:Rieszlimit}
\end{equation}
as $n\to\infty$.
\end{theorem}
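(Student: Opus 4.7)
The plan is to exhibit an explicit sequence. For each $n \geq 3$, take $G_n = \Z_n$ and $E_n = \{0, 1, \ldots, n-2\}$, so $|E_n| = n-1$. Since $\dim L^2(E_n) = n-1$, any exponential Riesz basis $\Lambda$ for $L^2(E_n)$ must consist of exactly $n-1$ characters of $G_n$, hence $\Lambda = \widehat{G_n} \setminus \{\chi_{j_0}\}$ for some $j_0 \in \{0, 1, \ldots, n-1\}$. Let $M$ denote the $(n-1)\times(n-1)$ restriction matrix with $M_{xj} = \chi_j(x)$ for $x \in E_n$ and $\chi_j \in \Lambda$. Since $\|\sum_j c_j e_j\|_{L^2(E_n)}^2 = c^* M^* M c$, the Riesz inequalities \eqref{eq:riesz} identify $A_{\max} = \sigma_{\min}^2(M)$ and $B_{\min} = \sigma_{\max}^2(M)$, so $\rho(E_n, \Lambda) = \sigma_{\max}^2(M)/\sigma_{\min}^2(M)$. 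I will show that $\rho(E_n) = n$, from which the theorem follows.

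First I would carry out a symmetry reduction. Left-multiplying $M$ by the diagonal unitary with entries $\overline{\chi_{j_0}(x)}$ preserves singular values and transforms the columns from $\{\chi_j : j \neq j_0\}$ into $\{\chi_{j-j_0} : j \neq j_0\} = \widehat{G_n} \setminus \{\chi_0\}$. Thus I may assume $\Lambda = \widehat{G_n} \setminus \{\chi_0\}$, reducing the analysis to a single canonical matrix. Writing $\omega = e^{2\pi i/n}$, a geometric series computation gives $(M^* M)_{jk} = \sum_{x=0}^{n-2} \omega^{(k-j)x} = n\delta_{jk} - \omega^{j-k}$ for $j, k \in \{1, \ldots, n-1\}$. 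Hence $M^* M = nI - uu^*$ where $u = (\omega, \omega^2, \ldots, \omega^{n-1})^T$ satisfies $\|u\|^2 = n-1$. Since $uu^*$ has rank one with nonzero eigenvalue $n-1$, the eigenvalues of $M^* M$ are $1$ (simple, with eigenvector $u$) and $n$ (with multiplicity $n-2$ on $u^\perp$). Therefore $\rho(E_n, \Lambda) = n/1 = n$ for every choice of $\Lambda$, so $\rho(E_n) = n \to \infty$.

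The hard part will be the symmetry reduction showing that every choice of $\Lambda$ yields the same Riesz ratio. Without such a reduction, a direct proof that $\rho(E_n) \to \infty$ would require a lower bound on $\rho(E_n, \Lambda)$ valid over all $n$ candidate bases $\Lambda$, which would be considerably more delicate; the diagonal-unitary equivalence collapses this issue and lets the eventual estimate reduce to a single rank-one perturbation of $nI$.
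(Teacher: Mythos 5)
Your proof is correct, but it takes a genuinely different route from the paper's. The paper's proof (Example \ref{ex:condtoinfty}) works in $G=\Z_m^2$ with $E=(\Z_m\times\{0\})\cup\{(0,1)\}$, a ``line plus one point'': a pigeonhole argument (formalized in Proposition \ref{prop:almostinsubgp}) forces any basis partner $B$ to contain two characters agreeing on the line $H=\Z_m\times\{0\}$, so two columns of the Fourier matrix differ in only one entry; testing the lower Riesz inequality on the corresponding difference vector gives $L(E)\leq 2$, while $U(E)\geq m+1$ always, whence $\rho(E)\geq (m+1)/2$. You instead take $G_n=\Z_n$ and $E_n=\Z_n\setminus\{n-1\}$, observe that every candidate $\Lambda$ is the complement of a single character, use the translation invariance of singular values (the paper's Proposition \ref{prop:invtrans}, which is exactly your diagonal-unitary reduction) to collapse all $n$ candidates to one, and then compute $M^*M=nI-uu^*$ exactly as a rank-one perturbation. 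Your computations check out (for $n=3$ your answer $\rho(\{0,1\};\Z_3)=3$ agrees with the value the paper reports in Section \ref{subsec:decomposeCartesian}), and your approach buys an \emph{exact} value $\rho(E_n)=n$ rather than the one-sided bound $\rho\geq(m+1)/2$; the trade-off is that it relies on $|E_n|=|G_n|-1$ so that the optimization over $B$ degenerates, whereas the paper's example illustrates the more robust mechanism (a set slightly thicker than a subgroup) that feeds into its later discussion of multi-tiling and geometric complexity.
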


\noindent
A specific example can be given by taking $G_{n}=\mathbb{Z}_{n+1}^{2}$ and $E_{n}=\left(\{0\}\times\mathbb{Z}_{n+1}\right)\cup\{(1,0)\}$. We can interpret \eqref{eq:Rieszlimit} as providing a weak form of evidence in favor of the open problem, that is, in favor of the existence of a specific subset of Euclidean space having no exponential Riesz basis. This is due to the intuition that if the sequence of sets $E_{n}$ satisfying \eqref{eq:Rieszlimit} ``converged,'' in some sense, to a specific subset $E$ of Euclidean space, and if $\rho$ were continuous with respect to this convergence, then it would follow that
\begin{equation}
\rho(E)=\lim_{n\to\infty}\rho(E_{n})=\infty,
\label{eq:Rieszcontinuity}
\end{equation}
resolving the open problem.

The existence of sets $E_{n}$ as in \eqref{eq:Rieszlimit} raises further questions of whether we can exhibit a general inequality between $\rho(E)$ and quantities pertaining to the geometry of $E$. A lower bound of this type for $\rho(E)$ might give sufficient conditions for \eqref{eq:Rieszlimit} to hold for a sequence of sets $E_{n}$; this would be of immense interest due to its bearing on the open problem, and might lead to its resolution.

It can be shown that $\rho(E)=1$ if and only if $E$ is a spectral set (Prop. \ref{prop:basicPropSpec}), and this fact suggests that a useful lower bound for $\rho(E)$ would already resolve Fuglede's conjecture for wide classes of sets. As such applications are not at hand today, it is not surprising that the lower bound problem appears quite difficult. Thus, we shift our attention to upper bounds. That is, intuitively speaking, we seek an inequality which shows that ``nice'' values of geometric quantities associated with $E$ imply that there exists a ``nearly-orthogonal'' exponential Riesz basis of $E$.

To get started with such an analysis, given a Riesz basis $\Lambda$ of a set $E$, we form the \emph{Fourier matrix} $M=M(E,\Lambda)$ such that each column consists of the values of an element of $\Lambda$ at the points of $E$; this matrix $M$ is useful because, for instance, $\rho(E,\Lambda)=\left(\cond M\right)^{2}$, where ``$\cond$'' is the condition number; we recall that the latter may be calculated in terms of the smallest and largest singular values of the matrix. By making use of cyclotomic polynomials and the values of symmetric polynomials at roots of unity, we can obtain (crude) estimates such as the following.

\begin{theorem}[Thm. \ref{thm:upperbdcondeb}]
\label{thm:introDetLowerBd}
Let $G$ be a finite abelian group, and let $m$ denote the minimal exponent of $G$. Let $E$ be a subset of $G$ with $n$ elements, and let $\Lambda$ be an exponential Riesz basis of $E$. Then, the inequality
\begin{equation}
|\det M(E,\Lambda)|\geq n^{n(1-\varphi(m))/2}
\label{eq:cyclotomicestimate}
\end{equation}
holds, where $\varphi$ is Euler's totient function.
\end{theorem}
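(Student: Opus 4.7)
The plan is to recognize $\det M$ as a nonzero algebraic integer in a cyclotomic field, bound the absolute values of its Galois conjugates via Hadamard's inequality, and exploit the fact that the field norm is a nonzero rational integer.

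First I would set up the algebraic framework. Every character $\chi\colon G\to\mathbb{S}^{1}$ satisfies $\chi(g)^{m}=\chi(mg)=1$, so all entries of $M$ are $m$-th roots of unity. Thus $\det M$ lies in the ring of integers of $K=\mathbb{Q}(\zeta_{m})$, where $\zeta_{m}=e^{2\pi i/m}$. Since $\Lambda$ is a Riesz basis for the $n$-dimensional space $L^{2}(E)$, one has $|\Lambda|=n$, so $M$ is square and invertible and $\det M\neq 0$.

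Next I would use the field norm. The norm
$$N_{K/\mathbb{Q}}(\det M)=\prod_{\sigma\in\operatorname{Gal}(K/\mathbb{Q})}\sigma(\det M)$$
is a nonzero rational integer, hence $|N_{K/\mathbb{Q}}(\det M)|\geq 1$. The Galois group has order $\varphi(m)$, and each $\sigma$ sends $\zeta_{m}$ to a primitive $m$-th root of unity $\zeta_{m}^{k}$, so the entrywise action of $\sigma$ on $M$ produces a matrix $M^{(\sigma)}$ whose entries are again $m$-th roots of unity and whose determinant equals $\sigma(\det M)$. Hadamard's inequality, applied row by row (each row of $M^{(\sigma)}$ has Euclidean norm $\sqrt{n}$), yields $|\sigma(\det M)|\leq n^{n/2}$ for every $\sigma$. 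Isolating the identity term,
$$1\leq|N_{K/\mathbb{Q}}(\det M)|=|\det M|\prod_{\sigma\neq\mathrm{id}}|\sigma(\det M)|\leq|\det M|\cdot n^{n(\varphi(m)-1)/2},$$
and rearranging gives $|\det M|\geq n^{n(1-\varphi(m))/2}$, which is the stated bound.

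The main obstacle, such as it is, is conceptual rather than technical: one must notice that the lower bound should come from this balance between the norm being at least $1$ and each individual conjugate being controlled by Hadamard. After that, every step is routine. A small point worth verifying is that the Galois action sends $m$-th roots of unity to $m$-th roots of unity (not merely primitive ones), but this is immediate because $\sigma$ fixes $\mathbb{Q}$ and so fixes $1$, while acting on the cyclic group $\mu_{m}$ by a power map coprime to $m$.
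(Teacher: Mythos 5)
Your proof is correct and is essentially the paper's argument in different clothing: the paper's Lemma \ref{lem:lowerbdrootunity} forms the product $\prod_{\gcd(k,m)=1}P(\omega^{k})$ and argues it is a nonzero rational integer via symmetric polynomials, which is exactly your statement that $N_{K/\mathbb{Q}}(\det M)$ is a nonzero rational integer, and both arguments then bound each conjugate by $n^{n/2}$ (the paper via the singular-value QM--GM bound of Proposition \ref{prop:simplebds}, equivalent to your row-wise Hadamard estimate) and isolate the identity term. No gaps; the Galois-norm phrasing is just a cleaner packaging of the same computation.
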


\noindent
Recall that, according to Rotman \cite{Ro}, the minimal exponent $m$ of a finite abelian group may be calculated by finding the least common multiple of the orders of all the elements of the group. According to Myerson \cite{My}, it is suspected that the quantities we are estimating cannot be ``anywhere near as small'' as the tiny right side suggests. However, giving an improved, perhaps non-exponential lower bound is in general an open problem on precisely how small a nonzero sum of roots of unity can be.

By itself, the above Theorem \ref{thm:introDetLowerBd} does not yield an upper bound for $\rho(E)$ in terms of geometric quantities associated with $E$. However, by exploiting such inequalities and combining them with results pertaining to multi-tiling by a subgroup---note that a subgroup here is the analogue of a lattice in Euclidean space---we are able to obtain an upper bound as desired, under an additional hypothesis which requires a further definition to state. The results pertaining to multi-tiling by a subgroup are partly inspired by results we obtained earlier on Cartesian products; we briefly mention the highlights of these results before giving the definitions necessary to state our main theorem.

The first observation about Fourier matrices corresponding to Cartesian products in finite abelian groups is that they are tensor products (see Sec. \ref{subsec:decomposeCartesian}). Indeed, if $E_{1}$ is a subset of a group $H$, with exponential Riesz basis $\Lambda_{1}$, and $E_{2}$ is a subset of a group $K$, with exponential Riesz basis $\Lambda_{2}$, then with the usual ordering of rows and columns, the equation
\begin{equation}
M(E_{1}\times E_{2}, \Lambda_{1}\times\Lambda_{2})=M(E_{1},\Lambda_{1})\otimes M(E_{2}, \Lambda_{2})
\label{eq:tensorproduct}
\end{equation}
gives the Fourier matrix for the Cartesian products.

As the singular values of a tensor product are the products of the singular values of the factors---see, for instance, the reference of Horn and Johnson \cite{HJ}---the equation \eqref{eq:tensorproduct} lets us extract information on tightness quantities of Cartesian products. For example, if we define
\begin{equation}
D(E)=\inf\{|\det M(E,\Lambda)|:\Lambda\text{ is an exponential Riesz basis for }E\},
\label{eq:absolutedeterminantofset}
\end{equation}
(see Def. \ref{def:absoluteDet} and \ref{def:setquantities}) and observe that $|\det M(E,\Lambda)|$ is the product of the singular values of $M(E,\Lambda)$, then this fact becomes relevant. We can use \eqref{eq:absolutedeterminantofset} to show that
\begin{equation}
\widetilde{D}(E_{1}\times E_{2})\leq\widetilde{D}(E_{1})\widetilde{D}(E_{2}),
\label{eq:absdetprodest}
\end{equation}
(Cor. \ref{cor:bdQcartesianset}) where $\widetilde{D}(E)\in [1,\infty)$ is the normalized tightness quantity corresponding to $D(E)$, for each set $E$. The precise definition of $\widetilde{D}(E)$ is identified later, in the body of our paper, using Hadamard's inequality for matrices (see Def. \ref{def:normalizedquants}).

As an application of \eqref{eq:absdetprodest}, or rather the analogous inequality for $\rho$, we obtain a sequence of sets which are, in the limit, as well-behaved as can be hoped for with respect to tightness quantities, despite neither tiling nor being spectral.

\begin{theorem}[Ex. \ref{ex:cartclosespec}]
\label{thm:introRhoGoesToOne}
There exist a sequence of finite abelian groups $G_{n}$ and a sequence of subsets $E_{n}\subseteq G_{n}$ such that
\begin{equation}
1 < \rho(E_{n})\to 1
\label{eq:asymptoticgood}
\end{equation}
as $n\to\infty$.
\end{theorem}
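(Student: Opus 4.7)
The plan is to exhibit an explicit construction using the Cartesian-product machinery developed earlier in the paper. Because the Fourier matrix of a Cartesian product is the tensor product of the factor Fourier matrices (see \eqref{eq:tensorproduct}) and singular values multiply under tensor products, the Riesz ratio is submultiplicative: $\rho(E_1 \times E_2) \leq \rho(E_1)\rho(E_2)$, in the same spirit as the inequality \eqref{eq:absdetprodest} for $\widetilde{D}$. The aim is then to drive this upper bound to $1$ while keeping non-spectrality intact.

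First, I would fix a small non-spectral seed set $A$ inside some finite abelian group with $\rho(A) > 1$ but finite (for instance, a carefully chosen three-element subset of a cyclic group whose $3 \times 3$ Fourier matrix is provably non-unitarizable regardless of the choice of three characters). This seed ensures that any construction that has $A$ as a Cartesian factor will inherit a built-in obstruction to spectrality, so that $\rho(E_n) > 1$. Second, I would form $E_n = A \times F_n$ for a carefully chosen sequence $F_n$ living in growing groups $G_n$. The growing ambient group $G_A \times G_n$ supplies an ever richer character set $\widehat{G_A \times G_n}$ from which to draw Riesz bases $\Lambda_n$ of $E_n$; the key point is that $\Lambda_n$ need not be of the tensor-product form $\Lambda_A \times \Lambda_{F_n}$.

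Third, I would verify the two required properties. Non-spectrality, $\rho(E_n) > 1$, should follow from an obstruction inherited from $A$, likely a cyclotomic or determinantal argument along the lines of Theorem \ref{thm:introDetLowerBd}, ruling out the existence of any orthonormal exponential basis for $E_n$. Convergence $\rho(E_n) \to 1$ should be achieved by writing down an explicit Fourier matrix $M(E_n, \Lambda_n)$ for a suitable non-tensor-product choice $\Lambda_n$, and showing that its extreme singular values both converge to a common value. In effect, the non-spectral ``defect'' contributed by $A$ can be made an asymptotically negligible perturbation of an otherwise unitary block coming from the $F_n$ direction.

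The main obstacle is the inherent tension between the two conclusions. The submultiplicative inequality alone only yields $\rho(E_n) \leq \rho(A) \cdot \rho(F_n)$, which, even if $\rho(F_n) \to 1$, cannot push $\rho(E_n)$ below $\rho(A) > 1$; and the abstract's observation that $\rho(E \times G) = \rho(E)$ when $G$ is a full group shows that naive Cartesian products cannot improve the ratio. Thus the argument must crucially exploit bases $\Lambda_n$ that are not products $\Lambda_A \times \Lambda_{F_n}$, so that the infimum in the definition \eqref{eq:Rieszratio} of $\rho(E_n)$ is strictly smaller than what the tensor construction provides. Producing an explicit such $\Lambda_n$ and controlling the singular values of $M(E_n, \Lambda_n)$ with sharp enough asymptotics to conclude $\rho(E_n) \to 1$ is the delicate computational step.
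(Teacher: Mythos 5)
There is a genuine gap, and it is precisely the ``delicate computational step'' you defer to the end: your construction fixes the seed $A$ with $\rho(A)>1$ \emph{independent of $n$}, and no choice of exponent set $\Lambda_n$ can then make $\rho(E_n)\to 1$. You yourself quote the fatal fact: when the second factor is a full group $K$, Theorem \ref{thm:dimexpand} gives the exact equality $\rho(A\times K)=\rho(A)$, so the infimum over \emph{all} bases, product or not, is stuck at $\rho(A)>1$. Hoping that non-tensor-product bases $\Lambda_n$ will beat this is not a gap to be filled by sharper singular-value asymptotics; for $F_n=K_n$ a full group it is provably impossible, and for general $F_n$ you offer no mechanism at all. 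The proposal as written therefore cannot be completed.

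The idea you are missing is to let the non-spectral factor itself vary with $n$ and become asymptotically spectral while never actually being spectral. The paper takes $A_n=\{0,1\}\subseteq\Z_{p_n}$ ($p_n$ the $n$th odd prime) and $E_n=A_n\times\Z_{p_n}\subseteq\Z_{p_n}^2$. Proposition \ref{prop:size2closespec} shows $\rho(A_n;\Z_{p_n})\to 1$ (explicitly, via $B=\{0,(p_n-1)/2\}$, whose $2\times 2$ Fourier matrix tends to the unitary $\bigl(\begin{smallmatrix}1&1\\1&-1\end{smallmatrix}\bigr)/1$ up to normalization), while $A_n$ is never spectral in $\Z_{p_n}$ for odd $p_n$ since orthogonality on a two-point set would force $1+\chi(b/p_n)=0$. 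Corollary \ref{cor:bdQsetprodspec} then gives $\rho(E_n)\le\rho(A_n)\to 1$, and Theorem \ref{thm:dimexpand} (or Proposition \ref{prop:basicPropSpec}) gives $\rho(E_n)=\rho(A_n)>1$. So the exact product identity that blocks your construction is exactly what makes the paper's construction work: the ``defect'' is not cancelled by the product, it is simply sent to zero in the base factor. Note also that your suggested three-element seed would run into Example \ref{ex:Zpsize3notclosespec}, where a fixed shape of size three fails to have $\rho\to 1$ even as the ambient group grows.
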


\noindent
For example, if we take $G_{n}=\mathbb{Z}_{p_{n}}^{2}$ and $E_{n}=\{0,1\}\times\mathbb{Z}_{p_{n}}$, where $p_{n}$ is the $n^{\text{th}}$ odd prime, then
\[
\rho(E_{n})\leq \rho(\{0,1\})\rho(\mathbb{Z}_{p_{n}})=\rho(\{0,1\})\to 1
\]
as $n\to\infty$. More precisely, using $\Lambda=\{0,\frac{p-1}{2}\}$ in $\mathbb{Z}_{p_n}$, we can compute that $\rho(\{0,1\}) - 1 \sim \frac{\pi}{p_{n}}$ asymptotically as $n\to\infty$. The contrast between \eqref{eq:asymptoticgood} and \eqref{eq:Rieszlimit} suggests that there is a full range of possible asymptotic behaviors of ``discrete shapes'' to explore, and this behavior may be sensitive to the multi-tiling level of the sets involved.

We also have a noteworthy result that, at least when the second factor of a Cartesian product is a group in its own right, inequalities of the type given by \eqref{eq:absdetprodest} become exact equalities.

\begin{theorem}[Thm. \ref{thm:dimexpand}]
\label{thm:introRhoProdExact}
Let $E$ be a subset of a finite abelian group $H$, and let $K$ be any finite abelian group. Then,
\begin{equation}
\rho(E\times K)=\rho(E).
\label{eq:cartequality}
\end{equation}
In particular, $E$ is spectral (in $H$) if and only if the Cartesian product $E\times K$ is spectral (in $H \times K$).
\end{theorem}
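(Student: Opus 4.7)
The plan is to prove $\rho(E \times K) \leq \rho(E)$ and $\rho(E) \leq \rho(E \times K)$ separately; the spectrality statement then follows from Proposition \ref{prop:basicPropSpec}, which identifies spectral sets with those satisfying $\rho = 1$, applied to both $E$ and $E \times K$.

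The upper bound is straightforward. I would take an exponential Riesz basis $\Lambda$ of $E$ and pair it with the natural orthogonal basis $\widehat{K}$ of $L^2(K)$. Character orthogonality on the group $K$ makes $M(K, \widehat{K})/\sqrt{|K|}$ unitary, so all of its singular values equal $\sqrt{|K|}$. By the tensor-product formula \eqref{eq:tensorproduct} and the multiplicativity of singular values under tensor products, the singular values of $M(E \times K, \Lambda \times \widehat{K})$ are precisely those of $M(E, \Lambda)$, each scaled by $\sqrt{|K|}$ and repeated $|K|$ times. Uniform scaling does not change the condition number, so $\rho(E \times K, \Lambda \times \widehat{K}) = \rho(E, \Lambda)$; infimizing over $\Lambda$ yields $\rho(E \times K) \leq \rho(E)$.

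The main obstacle is the reverse inequality, since an arbitrary exponential Riesz basis of $E \times K$ need not factor as a Cartesian product. The key idea is to use the translation action of $K$ on the second coordinate to decompose $L^2(E \times K)$ orthogonally into isotypic subspaces
\[
V_\chi = \set{g(x)\chi(y) : g \in L^2(E)}, \qquad \chi \in \widehat{K},
\]
each of dimension $|E|$ and isomorphic, up to the scaling factor $\sqrt{|K|}$, to $L^2(E)$. Every character of $H \times K$ is a joint eigenvector of the translation action, so any exponential Riesz basis $\Lambda \subseteq \widehat{H} \times \widehat{K}$ of $E \times K$ decomposes as $\Lambda = \bigsqcup_\chi \paren{\Lambda_\chi \times \set{\chi}}$, where $\Lambda_\chi := \set{h^* : (h^*, \chi) \in \Lambda}$, with the $\chi$-slice contained in $V_\chi$. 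Orthogonality of the $V_\chi$ together with the linear independence of Riesz-basis elements gives $|\Lambda_\chi| \leq |E|$, and the cardinality constraint $\sum_\chi |\Lambda_\chi| = |E| \cdot |K|$ forces equality for every $\chi$.

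To transfer the Riesz ratio to $\Lambda_\chi$, I would use that $\norm{g(\cdot)\chi(\cdot)}_{L^2(E \times K)}^2 = |K| \cdot \norm{g}_{L^2(E)}^2$ for every $g \in L^2(E)$. Restricting the Riesz inequalities \eqref{eq:riesz} for $\Lambda$ to coefficient sequences supported in a single slice (which yields test functions supported in one $V_\chi$) then shows that $\Lambda_\chi$ satisfies the Riesz inequalities on $L^2(E)$ with bounds $A/|K|$ and $B/|K|$, preserving the ratio. Hence $\rho(E, \Lambda_\chi) \leq \rho(E \times K, \Lambda)$ for every $\chi \in \widehat{K}$, and infimizing over $\Lambda$ completes the proof that $\rho(E) \leq \rho(E \times K)$.
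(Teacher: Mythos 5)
Your proposal is correct. The inequality $\rho(E\times K)\le\rho(E)$ is obtained exactly as in the paper (Cor. \ref{cor:bdQsetprodspec}, via the tensor-product formula and the unitarity of the second factor), and for the reverse inequality the overall skeleton also matches: show that any basis $\Lambda$ equidistributes over the fibers of $\widehat{H}\times\widehat{K}\to\widehat{K}$, then extract one fiber as a good basis for $E$. Your isotypic-decomposition proof of equidistribution is essentially a repackaging of the paper's Proposition \ref{prop:basisequidistr} (both reduce to the fact that more than $|E|$ functions of the form $h^*(x)\chi(y)$ with fixed $\chi$ are dependent on $E\times K$). Where you genuinely diverge is the extraction step. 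The paper applies the block-matrix factorization of Theorem \ref{thm:decomposeMultiTile} to the dual pair $(B,E)$, viewing $B$ as a level-$|E|$ multi-tile of $\widehat{G}$ by $\widehat{H}$, and transfers back via the duality $\rho_E(B)=\rho_B(E)$; this yields the identity $\rho_{E\times K}(\Lambda)=\max_\chi\widetilde{L}_{E}(\Lambda_\chi)\cdot\max_\chi\widetilde{U}_{E}(\Lambda_\chi)$. You instead restrict the Riesz quadratic form to coefficient vectors supported on a single slice and use $\norm{g\otimes\chi}^2_{L^2(E\times K)}=|K|\,\norm{g}^2_{L^2(E)}$, which gives only the one-sided bounds $L_E(\Lambda_\chi)\ge A/|K|$ and $U_E(\Lambda_\chi)\le B/|K|$ --- but that is all the theorem needs, since these already force $\rho_E(\Lambda_\chi)\le\rho_{E\times K}(\Lambda)$. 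Your route is more elementary and self-contained (it uses only the easy ``restriction'' half of the decomposition theorem and avoids the duality step), at the cost of not producing the exact slice-wise identity and not covering the determinant quantity $\widetilde{D}$, which the paper's Theorem \ref{thm:dimexpand} also treats; since the statement at hand concerns only $\rho$, nothing is lost.
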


\noindent
The equation \eqref{eq:cartequality} may be viewed as saying that the conditioning of the best-conditioned Riesz basis for a ``cylindric domain'' is precisely equal to the conditioning of the best-conditioned Riesz basis for the ``base'' of the cylindric domain. From this viewpoint, it may be thought of as a generalization of the 2016 result of Greenfeld and Lev \cite{GnL1} that, under certain circumstances, $E$ is spectral if and only if $E\times K$ is spectral, where $K$ is an interval in $\mathbb{R}$ or a convex planar domain in $\mathbb{R}^{2}$. This result was used in their main 2017 theorem \cite{GnL2} that Fuglede's conjecture holds for all convex polytopes in $\mathbb{R}^{3}$. This theorem and the 2003 result of Iosevich, Katz, and Tao \cite{IKT} that Fuglede's conjecture holds for convex planar domains in $\mathbb{R}^{2}$ were superseded when Lev and Matolsci \cite{LM} proved that Fuglede's conjecture holds for convex domains in all dimensions in 2019. Nevertheless, results on Cartesian products remain of interest because the conjecture that $E$ and $K$ must be spectral when $E\times K$ is spectral remains unresolved, according to Greenfeld and Lev \cite{GnL3}.

Finally, we carefully define a new term needed for us to state the main result of our paper; cf. Sec. \ref{subsec:decomposeMultiTile}. Suppose that $E$ multi-tiles a finite abelian group $G$ at level $\ell$ by a translation set $H$ which is a subgroup of $G$. Then, associated with $(E, H)$, we can consider the number of distinct ``cross sections'' of $E$ with respect to $H$ to be a measure of its geometric complexity. More precisely, define
\[
K=G/H=\{k_{1}, \dots, k_{m}\},
\]
so the $k_{i}\subset G$ are cosets. Then, we define the cross sections of $E$ with respect to $H$ to be the sets
\[
F_{i}=E\cap k_{i}.
\]
For (some) $g_{i}\in k_{i}$, we can define
\[
F_{i}'=F_{i}-g_{i}.
\]
Then, $F_{i}'$ is a subset of $H$, which we call a \emph{translated $H$-cross section} of $E$. Although $F_{i}'$ depends on the specific element $g_{i}\in k_{i}$ that we used, $F_{i}'$ is unique up to $H$-translation. Thus, the number $k$ of distinct (equivalence classes of) translated $H$-cross sections of $E$, modulo $H$-translation, is uniquely determined by $E$ and $H$.

Here is our main result.

\begin{theorem}[Thm. \ref{thm:mainUpperBdLevelComplexity}]
\label{thm:introMainRhoUpperBd}
Let $E$ be a subset of a finite abelian group $G$, and let $H$ be a subgroup of $G$. If \begin{enumerate}[label=(\roman*)]
\item $E$ multi-tiles $G$ by $H$ at level $\ell$,
\item $E$ has $k$ distinct translated $H$-cross sections $F_{i}'\subseteq H$, modulo $H$-translation, and 
\item the distinct translated $H$-cross sections $F_{i}'$ of $E$ have a simultaneous exponential Riesz basis as subsets of $H$, 
\end{enumerate}
then
\begin{equation}
\rho(E)<e\ell^{k\ell+1}.
\label{eq:geometricbound}
\end{equation}
($e$ is Euler's number.)
\end{theorem}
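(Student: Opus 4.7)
The plan is to construct an explicit exponential Riesz basis of $L^2(E)$ from the multi-tiling data, factor its Fourier matrix into a block-diagonal piece indexed by the cross sections and a perfectly conditioned piece encoding the character table of $G/H$, then bound the conditioning of the blocks through a determinant estimate obtained from Theorem \ref{thm:introDetLowerBd}.

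First I would build the basis. Let $\Lambda_0 = \{\psi_1,\dots,\psi_\ell\} \subseteq \widehat{H}$ be a simultaneous exponential Riesz basis for the $k$ distinct translated cross sections $F_a \subseteq H$, which exists by hypothesis (iii). For each $\psi_j$ fix an extension $\tilde\psi_j \in \widehat{G}$ with $\tilde\psi_j|_H = \psi_j$, and let $H^\perp \subseteq \widehat{G}$ denote the annihilator of $H$, naturally identified with $\widehat{G/H}$. Define
\[
\Lambda \;=\; \bigsqcup_{j=1}^{\ell} \tilde\psi_j \cdot H^\perp \;\subseteq\; \widehat{G}.
\]
Then $|\Lambda| = \ell \cdot [G:H] = |E|$ by hypothesis (i), so $\Lambda$ has the correct size for a Riesz basis of $L^2(E)$.

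Next, choose coset representatives $g_1,\dots,g_m$ for $H$ in $G$ so that each translated cross section $F_i' = (E \cap (g_i + H)) - g_i$ depends only on the equivalence class of $i$ among the $k$ classes. A direct computation gives
\[
M(E,\Lambda)_{(g_i+h),\,(\tilde\psi_j \phi)} \;=\; \psi_j(h)\,\tilde\psi_j(g_i)\,\phi(g_i),
\]
which factors as $M(E,\Lambda) = XY$, where (after appropriate reordering) $X$ is block-diagonal with one block $M(F_i',\Lambda_0)$ per coset $i$ (so, among the $m$ blocks, at most $k$ are distinct by (ii)), and $Y$ is block-diagonal in $j$ with each block equal to a unit-modulus diagonal matrix times the character table of $G/H$. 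Each block of $Y$ is $\sqrt{m}$ times a unitary matrix, so $\cond Y = 1$ and hence $\cond M(E,\Lambda) \leq \cond X$. Because $X$ is block-diagonal, its singular values are the union of the block singular values. Using $\sigma_{\max}(M(F_a,\Lambda_0)) \leq \norm{M(F_a,\Lambda_0)}_F = \ell$ and the standard estimate $\sigma_{\min}(M(F_a,\Lambda_0)) \geq |\det M(F_a,\Lambda_0)|/\ell^{\ell-1}$, I obtain
\[
\rho(E) \;\leq\; \cond(M(E,\Lambda))^{2} \;\leq\; \frac{\ell^{2\ell}}{\min_{a \in \{1,\dots,k\}} |\det M(F_a,\Lambda_0)|^{2}}.
\]

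The main obstacle is then to choose the simultaneous Riesz basis $\Lambda_0$ so that $\min_a |\det M(F_a,\Lambda_0)|^2 \geq \ell^{(2-k)\ell-1}/e$, for this combined with the previous display immediately yields \eqref{eq:geometricbound}. The factor $\ell^{k\ell}$ in the target bound arises precisely here: each of the $k$ distinct cross-section Fourier determinants must be kept simultaneously bounded away from zero, and the crude Hadamard bound $|\det| \leq \ell^{\ell/2}$ together with the reciprocal bound from singular-value control contributes one power of $\ell^\ell$ per cross section. I would tackle this by combining Theorem \ref{thm:introDetLowerBd}, applied to each cross section as a subset of $H$ (which provides a per-cross-section lower bound via cyclotomic arithmetic and symmetric-polynomial identities), with a pigeonhole or averaging argument over the finitely many simultaneous Riesz bases guaranteed by (iii), selecting one that maximizes the minimum cross-section determinant. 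The delicate part is absorbing any dependence on $\varphi$ of the exponent of $H$ into the single factor $\ell^{k\ell}$, which I expect requires applying Hadamard's inequality across the $k$ blocks simultaneously and then invoking the cyclotomic lower bound only once to control the worst block, rather than $k$ times separately. Once this lower bound on $\min_a |\det M(F_a,\Lambda_0)|$ is established, the theorem follows immediately.
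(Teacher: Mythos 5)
Your construction of $\Lambda=\bigsqcup_{j}\tilde\psi_j\cdot H^\perp$, the factorization $M(E,\Lambda)=XY$ with $X$ block-diagonal in the cross sections and $Y$ unitary up to scaling, and the reduction to a lower bound on $\min_a\abs{\det M(F_a,\Lambda_0)}$ all match the paper's route (Theorem \ref{thm:decomposeMultiTile}). But the step you defer is the heart of the proof, and the mechanism you sketch for it would not close the gap. Theorem \ref{thm:introDetLowerBd} gives $\abs{\det}\geq\ell^{\ell(1-\varphi(m))/2}$ with $m$ the exponent of $H$, and $\varphi(m)$ is unbounded in terms of $\ell$ and $k$; invoking it even ``only once'' leaves a $\varphi(m)$-dependence that cannot be absorbed into $\ell^{k\ell}$. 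Likewise, hypothesis (iii) guarantees only \emph{one} simultaneous basis, not a family over which to pigeonhole. The missing idea is to average over the Galois orbit of a single simultaneous basis $\Lambda_0$: for each $s$ coprime to the exponent $M$ of $H$, the set $s\Lambda_0$ is again a simultaneous basis, and $\prod_{\gcd(s,M)=1}\prod_{a=1}^{k}\det M(F_a,s\Lambda_0)$ is a nonzero rational integer (a symmetric function of the primitive $M$-th roots of unity), hence of absolute value at least $1$. So some single $s$ satisfies $\prod_{a}\abs{\det M(F_a,s\Lambda_0)}\geq 1$, and Hadamard's bound $\abs{\det}\leq\ell^{\ell/2}$ applied to the other $k-1$ factors gives $\abs{\det M(F_a,s\Lambda_0)}\geq\ell^{\ell(1-k)/2}$ for every $a$; one then runs your argument with $s\Lambda_0$ in place of $\Lambda_0$. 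This is Lemma \ref{lem:lowerbdrootunitykB} and Proposition \ref{prop:lowQsimul} in the paper, and it avoids the cyclotomic estimate of Theorem \ref{thm:introDetLowerBd} altogether.

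There is also a quantitative gap. Even with the optimal bound $\min_a\abs{\det}^2\geq\ell^{(1-k)\ell}$ from the Galois argument, your conversion $\sigma_{\min}\geq\abs{\det}/\ell^{\ell-1}$ (bounding each of $\sigma_2,\dots,\sigma_\ell$ by $\ell$ separately) yields only $\rho(E)\leq\ell^{2\ell}/\min_a\abs{\det}^2\leq\ell^{(k+1)\ell}$, which misses the claimed $e\ell^{k\ell+1}$ by a factor of roughly $\ell^{\ell-1}/e$; correspondingly, your stated target $\min_a\abs{\det}^2\geq\ell^{(2-k)\ell-1}/e$ exceeds what the averaging argument delivers by that same factor, so it is not available. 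The fix is the paper's Propositions \ref{prop:upperbdD} and \ref{prop:invertupperbdD}: using the trace identity $\sigma_1^2+\dots+\sigma_\ell^2=\ell^2$ and AM--GM on $\sigma_2,\dots,\sigma_\ell$, one gets $\sigma_1^2\geq\paren{\frac{\ell-1}{\ell^2}}^{\ell-1}\abs{\det}^2$, and since $(\ell-1)^{\ell-1}>\ell^{\ell-1}/e$ this recovers exactly the factor you are missing and yields $\rho(E)<e\ell^{k\ell+1}$.
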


It can be shown that, for $H$ a cyclic group, condition (iii) above always holds (Prop. \ref{prop:cyclicsimulbasis}). As a corollary of \eqref{eq:geometricbound}, we know that the behavior exhibited by \eqref{eq:Rieszlimit} cannot occur under certain circumstances:

\begin{cor}
Let $G_{n}$ be a sequence of finite abelian groups. For each $n$, let $E_{n}$ be a subset of $G_{n}$ which
\begin{enumerate}[label=(\roman*)]
\item multi-tiles by a subgroup $H_{n}$ of $G_{n}$ at level $\ell_{n}$,
\item has $k_{n}$ distinct translated $H_{n}$-cross sections modulo $H_{n}$-translation, and
\item the distinct translated $H_{n}$-cross sections have a simultaneous Riesz basis as subsets of $H_n$.
\end{enumerate}
Then, if $\{\ell_n\}_{n=1}^\infty$ and $\{k_n\}_{n=1}^\infty$ are bounded, then $\{\rho(E_{n})\}_{n=1}^{\infty}$ is bounded. In particular,
\[
\rho(E_{n})\not\to\infty
\]
as $n\to\infty$.
\end{cor}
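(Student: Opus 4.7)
The plan is to apply Theorem \ref{thm:introMainRhoUpperBd} to each pair $(E_n, H_n)$ separately and then invoke the boundedness hypotheses to extract a uniform upper bound on the sequence $\{\rho(E_n)\}_{n=1}^{\infty}$. Once such a uniform bound is established, the conclusion $\rho(E_n) \not\to \infty$ follows immediately, since a bounded sequence cannot diverge to infinity.

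First, I would observe that hypotheses (i)--(iii) of the corollary match, term for term, the hypotheses (i)--(iii) of Theorem \ref{thm:introMainRhoUpperBd} applied with $E = E_n$, $G = G_n$, $H = H_n$, $\ell = \ell_n$, and $k = k_n$. The theorem then yields
$$\rho(E_n) < e\,\ell_n^{k_n \ell_n + 1} \qquad \text{for every } n \geq 1.$$

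Next, I would combine these pointwise bounds into a single uniform bound. Setting $L = \sup_n \ell_n$ and $K = \sup_n k_n$, both of which are finite by assumption, and noting that each multi-tiling level satisfies $\ell_n \geq 1$, the bounding function $(k, \ell) \mapsto e\,\ell^{k\ell + 1}$ is nondecreasing in both variables on the relevant range $\ell \geq 1$, $k \geq 0$. Hence
$$\rho(E_n) < e\,L^{KL + 1}$$
for all $n$, which shows that $\{\rho(E_n)\}_{n=1}^{\infty}$ is bounded, and in particular cannot tend to infinity.

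There is no substantive obstacle: this corollary is essentially a restatement of Theorem \ref{thm:introMainRhoUpperBd} in the limiting-sequence language used for Theorem \ref{thm:introRhoDiverges}, and all of the real work lies inside Theorem \ref{thm:introMainRhoUpperBd} itself. The only minor point worth verifying is the elementary monotonicity of $\ell \mapsto \ell^{k\ell + 1}$ on $\ell \geq 1$, together with the trivial fact that $\ell_n \geq 1$ by definition of a multi-tiling level. The value of the corollary, from an expository standpoint, is to make explicit which geometric invariants of the sets $E_n$ must grow unboundedly in order to produce the divergent behavior \eqref{eq:Rieszlimit}.
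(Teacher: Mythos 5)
Your proposal is correct and is exactly the argument the paper intends: the corollary is stated as an immediate consequence of Theorem \ref{thm:introMainRhoUpperBd}, obtained by applying the bound $\rho(E_n)<e\,\ell_n^{k_n\ell_n+1}$ to each $n$ and using boundedness of $\{\ell_n\}$ and $\{k_n\}$ (indeed, since these are bounded sequences of positive integers, only finitely many values of the bound occur). Your monotonicity check is a fine way to make the uniform bound explicit, and the edge case $\ell_n=1$ is harmless since then $E_n$ tiles by $H_n$ and $\rho(E_n)=1$ by Proposition \ref{prop:TilesAreSpec}.
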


Our paper is organized as follows.

\begin{itemize}
\item Section \ref{sec:prevLit}. We give background on previous literature.

\item Section \ref{sec:prelim}. We give preliminaries and notations. A noteworthy result is Theorem \ref{thm:introRieszExist} on existence of exponential Riesz bases.

\item Section \ref{sec:properties}. We investigate basic (Sec. \ref{subsec:basicProps}) and invariance (Sec. \ref{subsec:invarianceProps}) properties of tightness quantities. Of note are the use of Hadamard's inequality (Rem. \ref{remark:HadamardIneq}), the affine restriction property (Prop. \ref{prop:affinerestrict}), and an example that proves Theorem \ref{thm:introRhoDiverges}.

\item Section \ref{sec:bounds}. We prove rather sophisticated estimates for tightness quantities, starting from their relationships (Sec. \ref{subsec:boundsRelations}), upper bounds for pairs (Sec. \ref{subsec:boundsUpperEB}), and finally upper bounds for sets (Sec. \ref{subsec:boundsUpperE}).
A key estimate is Lemma \ref{lem:lowerbdrootunity}, leading to Theorem \ref{thm:introDetLowerBd} for pairs. The argument later extends to Lemma \ref{lem:lowerbdrootunitykB} and Theorem \ref{thm:upperBdSetBySize} for sets.

\item Section \ref{sec:decompose}. We prove various decomposition results that will be used in the proof of our main theorem. Section \ref{subsec:decomposeCartesian} derives inequalities involving Cartesian products. Section \ref{subsec:decomposeMultiTile} concerns multi-tiles by subgroups (Thm. \ref{thm:decomposeMultiTile}), generalizing Cartesian products where one factor is the full direct summand. Finally, Section \ref{subsec:decomposeExact} proves an exact result of independent interest, Theorem \ref{thm:introRhoProdExact}.

\item Section \ref{sec:MultiTileGeometricComplex}. We investigate simultaneous bases (Sec. \ref{subsec:simulBasis}) and state and prove our main Theorem \ref{thm:introMainRhoUpperBd} (Sec. \ref{subsec:mainResult}).

\item Section \ref{sec:conj}. We discuss a heuristic that any continuity we have in the tightness quantities may be ``going the wrong way'' for the open problem to be as easily resolved as \eqref{eq:Rieszcontinuity} suggests (Sec. \ref{subsec:continuityRiesz}), as well as a conjecture on fractals (Sec. \ref{subsec:fractals}).
\end{itemize}

During our investigations, we have made used of a MATLAB program we wrote ourselves to compute tightness quantities. It is available at \url{https://github.com/natso26/RieszMultiTiling}.

 \subsection{Open questions}
\label{subsec:openQuestions}

There are too many open problems that arise from this work for us to give any definitive list, but here are four that we have found most fascinating.

\begin{question}[Simultaneous basis question]
Is it true that for every finite abelian group $G$, and every pair of nonempty subsets $E_{1}, E_{2}$ of $G$ such that $|E_{1}|=|E_{2}|$, there exists a collection of exponentials $\Lambda\subseteq\widehat{G}$ such that the restrictions of the exponentials to $E_{1}$ form a Riesz basis for $E_{1}$ and the restrictions to $E_{2}$ form a Riesz basis for $E_{2}$? In other words, given two subsets of the same size, is there always a simultaneous exponential Riesz basis for both?
\end{question}

In $\mathbb{Z}_{2}^{2}$, there exist three subsets, each with two elements, which have no simultaneous Riesz basis (Ex. \ref{ex:nosimulbasis}). However, the above question for two subsets is unresolved.

\begin{question}[Lower bound for $\rho$]
Can we obtain a nontrivial lower bound for $\rho(E)$, perhaps under additional hypotheses, in terms of geometric information about $E$?
\end{question}

We might hope for some inequality similar in character to our main Theorem \ref{thm:introMainRhoUpperBd}.

\begin{question}[Low multi-tiling level but geometrically complex]
If $G_{n}=\mathbb{Z}_{p_{n}}^{2}$, where $p_{n}$ is the $n^{\text{th}}$ odd prime, and we define
\[
E_{n}=\left(\{0\}\times\mathbb{Z}_{p_n}\right)\cup\left((1,1)\mathbb{Z}_{p_n}\right)\cup\{(1,0)\},
\]
then is it true that
\[
\rho(E_{n})\to\infty
\]
as $n\to\infty$?
\end{question}

This question is important because, unlike the family of sets $E_{n}$ for which \eqref{eq:Rieszlimit} occurred before, each set in this sequence mulit-tiles at level $2$. In other words, we know that ``bad'' behavior can be generated with a sequence of sets whose lowest multi-tiling levels become arbitrarily large. Is it possible to generate equally ``bad'' behavior with a sequence of sets whose ``geometric complexity'' $k$ becomes arbitrarily large, but whose lowest multi-tiling levels are bounded? If the answer is ``Yes,'' then this may suggest an alternative pathway to resolving the open question about nonexistence of Riesz bases in Euclidean space, by focusing on geometric complexity rather than the failure of multi-tiling at finite level for the geometric reasoning behind the proposed solution.

More generally, given a family of sets $E_{n}$ as above which are ``defined similarly'' for groups $G_{n}=\mathbb{Z}_{p_n}^{2}$, what are the asymptotics of $\rho(E_{n})$ as $n\to\infty$? Can these asymptotics be given in terms of geometric quantities associated with $E_{n}$?

\begin{question}[Continuous settings]
Can our discrete main Theorem \ref{thm:introMainRhoUpperBd} be generalized to the continuous settings in some way? In particular, our argument has three main components:

\begin{enumerate}
\item Decomposition (Thm. \ref{thm:decomposeMultiTile});
\item Looping around (Prop. \ref{prop:lowQsimul});
\item Simultaneous basis.
\end{enumerate}

Can these three components be thus generalized?
\end{question}

\section{Previous literature}
\label{sec:prevLit}

To introduce and follow the historical thread for the problem we consider here, we first review Fuglede's conjecture \cite[p.~119]{Fug}. In 1974, Fuglede conjectured, for subsets of Euclidean space, a connection between their discrete geometry and orthogonal bases of complex exponentials of the type used in Fourier analysis. Specifically, given a measurable subset $E$ of $\mathbb{R}^{d}$ of finite, positive measure, \emph{Fuglede's conjecture} says that $E$ tiles $\mathbb{R}^{d}$ if and only if $E$ is spectral. We recall that $E$ \emph{tiles} $\mathbb{R}^{d}$ if there exists a translation set $T\subseteq\mathbb{R}^{d}$ such that the translates $E+t=\{x+t:x\in E\}$ of $E$ by the elements $t\in T$ are pairwise disjoint, meaning $(E+t)\cap(E+t')=\varnothing$ for $t\neq t'$, and cover $\mathbb{R}^{d}$, meaning $\bigcup_{t\in T}(E+t)=\mathbb{R}^{d}$, with each of these equations being considered only up to sets of measure zero. We say that $E$ is \emph{spectral} if there exists an exponent set $\Lambda\subseteq\mathbb{R}^{d}$ such that the exponentials $x\mapsto e^{2\pi i \lambda\cdot x}$ with domain $E$, with exponents given by the $\lambda\in\Lambda$, are pairwise orthogonal with respect to $\langle f, g\rangle = \int_{E}f\overline{g}\ dx$ and complete in $L^{2}(E)$; equivalently, $\{x\mapsto e^{2\pi i \lambda\cdot x}\}_{\lambda\in\Lambda}$ forms an orthogonal basis for $L^{2}(E)$. Fuglede \cite[pp.~107--108]{Fug} proved that if $E$ is spectral with exponent set $\Lambda$, then $\Lambda$ is the joint spectrum of a unique family of self-adjoint, pairwise commuting extensions of the operators $\left.\frac{1}{2\pi i}\frac{\partial}{\partial x_{1}}\right|_{C_{c}^{\infty}(E)},\dots, \left.\frac{1}{2\pi i}\frac{\partial}{\partial x_{d}}\right|_{C_{c}^{\infty}(E)}$, hence the name ``spectral" and the common parlance of calling $\Lambda$ a \emph{spectrum} of $E$.

Despite partial results in favor of Fuglede's conjecture, such as the 2001 result of \L aba \cite{Lab} that it holds for a union of two intervals in $\mathbb{R}$ and the 2003 result of Iosevich, Katz, and Tao \cite{IKT} that it holds for convex planar domains in $\mathbb{R}^{2}$, later improved to all dimensions by Lev and Matolcsi \cite{LM} in 2019, the conjecture was disproved by Tao \cite{Tao} in 2004. Specifically, Tao gave a counterexample in $\mathbb{R}^{5}$. To do this, Tao produced a spectral subset $E$ of size $6$ in $\mathbb{Z}_{3}^{5}$, verifying the spectral property by hand. As $6$ does not divide $3^{5}$, this set $E$ does not tile, and hence violates the analogue of Fuglede's conjecture in $\mathbb{Z}_{3}^{5}$. Then, Tao ``lifted'' $E$ to Euclidean space using the function $f:\mathbb{R}^{5}\to \mathbb{Z}_{3}^{5}$, given by $f(x)=r$, which rounds each coordinate $x_{i}$ down to the integer $\lfloor x_{i}\rfloor$ and then finds that integer's remainder $r_{i}$ modulo $3$ in $\mathbb{Z}_{3}$. The inverse image $f^{-1}(E)$ under $f$ of Tao's set $E$, however, does not meet Fuglede's requirement of having finite measure. Nevertheless, using a scaling argument involving four scales, Tao proved that the intersection $C\cap f^{-1}(E)$ of the inverse image with a suitably large and well-chosen cube $C$ in $\mathbb{R}^{5}$ persists in being a spectral set which does not tile; Tao concluded that Fuglede's conjecture fails in $\mathbb{R}^{d}$ for $d\geq 5$.

Tao's counterexample only shows that the ``spectral implies tiling" implication in Fuglede's conjecture fails. Subsequently, the ``tiling implies spectral'' implication has been disproved in $\mathbb{R}^{5}$, and both implications have been disproved in $\mathbb{R}^{3}$ and $\mathbb{R}^{4}$ also, due to the work of Farkas, Kolountzakis, Matolcsi, M\'{o}ra, and R\'{e}v\'{e}sz \cite{FarkMM, FarkR, KM2, KM1, Mat} in 2005 and 2006. All of their counterexamples were obtained by finding and lifting suitable sets from finite abelian groups. Both implications of Fuglede's conjecture in $\mathbb{R}^{d}$ for $d\leq 2$ remain unresolved, but in 2017 it was shown by Iosevich, Mayeli, and Pakianathan \cite{IMP} that the analogue of Fuglede's conjecture in $\mathbb{Z}_{p}^{d}$ holds for all primes $p$ when $d\leq 2$. This is contrasted by the result of Ferguson and Sothanaphan \cite{FS}, given in 2019, that it fails already in $\mathbb{Z}_{p}^{4}$ for all odd primes $p$. The work of Iosevich, Mayeli, and Pakianathan suggests, at least, that any counterexamples to Fuglede's conjecture in $\mathbb{R}$ or $\mathbb{R}^{2}$, if they exist, will be rather more difficult to obtain than the one given by Tao for $\mathbb{R}^{5}$.

In 2016, Olevskii and Ulanovskii drew attention to the following question, by stating it as the first open problem in their book of harmonic analysis lectures \cite[p.~7]{OU}.

\begin{question}[Olevskii, Ulanovskii]
\label{question:olevskii}
Let $\mathcal{B}$ be the unit disc in $\mathbb{R}^{2}$. Does the space $L^{2}(\mathcal{B})$ admit an exponential Riesz basis?
\end{question}

\noindent
Moreover, so far no single example of a set $S$ (in any dimension) is known such that the space $L^{2}(S)$ does not admit an exponential Riesz basis.

To make this question precise, we define a \emph{Riesz basis} of a Hilbert space $\mathcal{H}$ to be a family of vectors $\{f_{i}\}_{i\in I}$ in $\mathcal{H}$ such that (i) every vector $f\in\mathcal{H}$ can be expressed as $f=\sum_{i\in I}c_{i}f_{i}$, for some uniquely determined $c_{i}\in\mathbb{C}$, and (ii) there exist constants $0<A\leq B<\infty$ such that, for all $c_{i}\in\mathbb{C}$, the bounds
\begin{equation}
A\sum_{i\in I}|c_{i}|^{2}\leq \left\|\sum_{i\in I}c_{i}f_{i} \right\|_{\mathcal{H}}^2\leq B\sum_{i\in I}|c_{i}|^{2}\label{eq:Riesz}
\end{equation}
hold. An equivalent definition is that $\{f_{i}\}_{i\in I}$ is a Riesz basis of a Hilbert space $\mathcal{H}$ if it is the image of an orthonormal basis of $\mathcal{H}$ under a bounded, invertible operator $T:\mathcal{H}\to\mathcal{H}$. Given a measurable subset $S$ of finite, positive measure in $\mathbb{R}^{d}$, we define an \emph{exponential Riesz basis of $S$} to be a family of exponential functions $\{e_{\lambda}\}_{\lambda\in\Lambda}$ on $S$, indexed by $\Lambda\subseteq\mathbb{R}^{d}$ and given by $e_{\lambda}(x)=e^{2\pi i \lambda\cdot x}$ for each $\lambda\in\Lambda$, which form a Riesz basis of $L^{2}(S)$; if an exponential Riesz basis of $S$ exists, then we say that $L^{2}(S)$ \emph{admits an exponential Riesz basis}. In the literature, such as in the work of Young \cite{You}, a series of the form $\sum_{\lambda\in\Lambda}c_{\lambda}e_{\lambda}$ is referred to as a \emph{nonharmonic Fourier series}.

It is noteworthy that Olevskii and Ulanovskii first ask, in their open problem, whether the unit disc has an exponential Riesz basis. Already in 1974, Fuglede \cite[pp.~111--112]{Fug} announced the result that there do not exist infinitely many pairwise orthogonal exponentials on the unit disc $\mathcal{B}$. As $L^{2}(\mathcal{B})$ is not finite-dimensional, it follows that $\mathcal{B}$ is not a spectral set in $\mathbb{R}^{2}$. In 2001, Fuglede \cite{Fug2} gave the details of his proof, using Bessel functions to show that in $\mathbb{R}^{d}$, for all $d\geq 2$, there do not exist infinitely many pairwise orthogonal exponentials on the unit ball. Thus, the unit disc in $\mathbb{R}^{2}$, a set having no orthogonal basis of exponentials, is perhaps a reasonable candidate for possessing the stronger property of having no exponential Riesz basis; at least, no known theorem rules out this possibility.

Although direct evidence for the unit disc having no exponential Riesz basis is hard to come by in the literature, this is an area of active research interest, as suggested by a recent result of Iosevich, Lai, Liu, and Wyman proved in 2019 \cite{ILLW}. It implies that $L^2(\mb{S}^{d-1})$ has no ``Fourier frame'' or frame of exponentials. Here, the unit sphere $\mb{S}^{d-1}$ in $\mathbb{R}^{d}$, $d\geq 2$, is equipped with its usual surface measure. Having no Fourier frame implies the weaker property of having no exponential Riesz basis. Thus, this is perceived as relevant to the exponential Riesz basis problem for the unit disc, due to the circle $\mb{S}^{1}$ being its boundary, and appears to support the reasonableness of the unit disc as a candidate for having no exponential Riesz basis under Lebesgue measure.

In contrast, a regular polygon is not necessarily a reasonable candidate for the property of having no Riesz basis of exponentials. For example, it is well known that a regular octagon does not tile $\mathbb{R}^{2}$, unlike a square or a regular hexagon. By the result of Iosevich, Katz and Tao \cite{IKT}, it follows that a regular octagon is not a spectral set. However, it can be shown that a regular octagon does multi-tile $\mathbb{R}^{2}$.

We say that a subset $E$ of $\mathbb{R}^{d}$ \emph{multi-tiles} if there exists a positive integer $\ell$ such that, for some translation set $T\subseteq\mathbb{R}^{d}$, the indicator functions $1_{E+t}$ of the translates of $E$ by the elements $t\in T$ are such that
\begin{equation}
\sum_{t\in T}1_{E+t}(x)=\ell\label{eq:multi}
\end{equation}
holds for almost every $x\in\mathbb{R}^{d}$. If equation \eqref{eq:multi} holds for an integer $\ell$, we also say that $E$ \emph{multi-tiles at level} $\ell$; clearly, a set multi-tiles at level $1$ precisely when it tiles.

Returning to the octagon, which multi-tiles $\mathbb{R}^{2}$, our attention is drawn to the 2014 result of Grepstad and Lev \cite{GL} that if $E$ is a bounded, Riemann measurable set of positive measure that multi-tiles $\mathbb{R}^{d}$ by a translation set $T$ which is a lattice, then $E$ has an exponential Riesz basis. For our purposes, we may define a \emph{lattice} in $\mathbb{R}^{d}$ to be any set which is the image of $\mathbb{Z}^{d}$ under some invertible linear transformation $A:\mathbb{R}^{d}\to\mathbb{R}^{d}$. As it can be shown that a regular octagon multi-tiles by a lattice, and is Riemann measurable since its boundary has Lebesgue measure zero, it follows that a regular octagon has an exponential Riesz basis. Thus, a regular polygon, even if it is not spectral, may have an exponential Riesz basis.

Grepstad and Lev \cite[p.~2]{GL} say that their result is in the ``spirit of Fuglede's theorem." The theorem of Fuglede \cite{Fug} that their remark refers to is below.

\begin{theorem}[Fuglede]
If a subset $S$ of $\mathbb{R}^{d}$ of finite, positive measure tiles by a translation set $T$ which is a lattice, then $S$ admits a spectrum $\Lambda$ which is a lattice, and conversely.
In particular, if $S$ has a translation set given by $T=A(\mathbb{Z}^{d})$, with $A$ a linear transformation having $\det(A)\neq 0$, then the dual lattice $\Lambda=(A^{-1})^{t}(\mathbb{Z}^{d})$ is a spectrum of $S$, and conversely.
\end{theorem}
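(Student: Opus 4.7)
The plan is to use Fourier analysis, specifically the relationship between periodization of an $L^{1}$ function and its Fourier transform on a dual lattice (a form of Poisson summation). Write $T=A(\mathbb{Z}^{d})$ with $\det(A)\neq 0$, set $\Lambda=T^{*}=(A^{-1})^{t}(\mathbb{Z}^{d})$, and note $\operatorname{covol}(T)=|\det(A)|$ and $\operatorname{covol}(\Lambda)=|\det(A)|^{-1}$.

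For the forward direction, first rephrase the tiling relation $\sum_{t\in T}\mathbf{1}_{S}(x-t)=1$ as saying that the $T$-periodization $g$ of $\mathbf{1}_{S}$ is identically $1$. Since $\mathbf{1}_{S}\in L^{1}(\mathbb{R}^{d})$, the Fourier coefficient of $g$, viewed as a function on the torus $\mathbb{R}^{d}/T$, at the character indexed by $\mu\in T^{*}$ equals $\operatorname{covol}(T)^{-1}\widehat{\mathbf{1}_{S}}(\mu)$. Comparing with the Fourier expansion of the constant function $1$ yields both $\widehat{\mathbf{1}_{S}}(\mu)=0$ for every $\mu\in\Lambda\setminus\{0\}$ and the identity $|S|=\operatorname{covol}(T)$. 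Because $\langle e_{\lambda},e_{\lambda'}\rangle_{L^{2}(S)}=\widehat{\mathbf{1}_{S}}(\lambda'-\lambda)$, this vanishing immediately yields orthogonality of $\{e_{\lambda}\}_{\lambda\in\Lambda}$ on $S$.

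For completeness, I would exploit the tiling to construct an isometry $\Phi:L^{2}(S)\to L^{2}(\mathbb{R}^{d}/T)$ sending $f$ to its $T$-periodic extension; this is well-defined because the $T$-translates of $S$ partition $\mathbb{R}^{d}$ up to null sets, and it is isometric (after a trivial rescaling) since $|S|=\operatorname{covol}(T)$. Under $\Phi$, each $e_{\lambda}|_{S}$ with $\lambda\in\Lambda$ maps to the standard character on $\mathbb{R}^{d}/T$ indexed by $\lambda$, and those characters form an orthogonal basis of $L^{2}(\mathbb{R}^{d}/T)$; hence $\{e_{\lambda}|_{S}\}_{\lambda\in\Lambda}$ is an orthogonal basis of $L^{2}(S)$, proving $\Lambda$ is a spectrum. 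For the converse, suppose $\Lambda=T^{*}$ is a spectrum of $S$. Orthogonality gives $\widehat{\mathbf{1}_{S}}(\mu)=0$ for $\mu\in\Lambda\setminus\{0\}$ just as before. To recover the tiling, apply Parseval to $e_{\mu}\in L^{2}(S)$ for $\mu$ ranging over a fundamental domain of $\Lambda$, obtaining $\sum_{\lambda\in\Lambda}|\widehat{\mathbf{1}_{S}}(\lambda-\mu)|^{2}=|S|^{2}$; integrating in $\mu$ and invoking Plancherel for $\mathbf{1}_{S}$ yields $|S|=\operatorname{covol}(T)$. Then the $T$-periodization $g$ has all Fourier coefficients vanishing except at $\mu=0$, where the coefficient equals $1$, forcing $g\equiv 1$ and hence tiling by $T$.

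The main obstacle is the converse direction, where completeness of the spectrum must be converted into the measure-theoretic identity $|S|=\operatorname{covol}(T)$ before the Fourier argument can close. The Parseval-plus-Plancherel integration trick circumvents this cleanly, but it depends on genuine basis completeness rather than merely orthogonality, which is exactly the point at which the spectrum hypothesis enters with full strength. A minor technicality is justifying convergence of the periodization and of the Fourier series of $g$, but these are standard for $L^{1}$ functions and pose no real difficulty.
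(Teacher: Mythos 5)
Your argument is correct, but note that the paper itself does not prove this theorem: it is quoted as a classical result, attributed to Fuglede \cite{Fug}, with the remark that Kolountzakis \cite{KoS} gave a brief proof ``using tempered distributions and the Poisson summation formula.'' What you have written is essentially that Poisson-summation proof, carried out at the elementary level of Fourier series of $L^1$ and $L^2$ functions on the torus $\mathbb{R}^d/T$ rather than with tempered distributions. All the key steps check out: the identification of the Fourier coefficients of the $T$-periodization of $\mathbf{1}_S$ with $\operatorname{covol}(T)^{-1}\widehat{\mathbf{1}_S}(\mu)$ for $\mu$ in the dual lattice; the deduction of orthogonality from $\widehat{\mathbf{1}_S}$ vanishing on $\Lambda\setminus\{0\}$; completeness via the unitary $\Phi:L^2(S)\to L^2(\mathbb{R}^d/T)$, which is legitimate because level-one tiling makes $S$ an essential fundamental domain; and, in the converse, the Parseval-plus-Plancherel integration over a fundamental domain of $\Lambda$ to force $|S|=\operatorname{covol}(T)$, after which uniqueness of Fourier coefficients for $L^1$ functions on the compact group $\mathbb{R}^d/T$ gives $g\equiv 1$. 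You correctly isolate the one place where completeness (as opposed to mere orthogonality) of the exponentials is genuinely needed, namely in recovering the volume identity in the converse direction; an orthogonal system that is not complete would only give the inequality $\sum_{\lambda}|\widehat{\mathbf{1}_S}(\lambda-\mu)|^2\leq|S|^2$ via Bessel, and hence only $|S|\leq\operatorname{covol}(T)$. The payoff of your phrasing over the distributional one is that every object is an honest function and the convergence issues reduce to standard facts about Fourier coefficients of $L^1$ periodic functions.
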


\noindent
In a 2004 survey, Kolountzakis \cite{KoS} gave a new, brief proof of Fuglede's theorem using tempered distributions and the Poisson summation formula.
Observe that Fuglede's conjecture can be thought of as the generalization of Fuglede's theorem obtained by dropping the lattice requirement on $T$ and $\Lambda$. Also note that, unlike the result of Grepstad and Lev, Fuglede's theorem does not require that $S$ be Riemann measurable.

In 2015, Kolountzakis \cite{Ko} gave an independent proof of Grepstad and Lev's theorem, albeit with the hypothesis on $E$ of Riemann measurability weakened to Lebesgue measurability. Moreover, while Grepstad and Lev used the method of Meyer's quasicrystals, Kolountzakis's proof is essentially elementary in character, using only linear algebra, the homomorphism property of exponentials, and basic measure theory. This raises the question of whether the study of Riesz bases of exponentials might be more clearly elaborated in the more general context of locally compact abelian groups. This question serves as one starting point of our paper. Another motivation comes from Tao's success in addressing Fuglede's conjecture by examining finite abelian groups. Based on these considerations, it seems natural to examine Riesz bases of exponentials in finite abelian groups, in the hopes of shedding light on the open problem of whether there exists a set which has no exponential Riesz basis.

\section{Preliminaries and notations}
\label{sec:prelim}

For a vector $x\in \C^d$, we denote the $\ell_2$ norm by $\|x \|$ and define it by $\|x\| := \sqrt{\sum_i |x_i|^2}$. The operator norm or induced-$\ell_2$ norm of a matrix $A$ is defined by 
$$\|A\|=\sup\{\|Ax\|: x\in \C^d, \  \ \|x\|=1\}.$$

\begin{definition}[Condition number]
The condition number of a matrix $A$ is given by $\cond (A)=\norm{A}\norm{A^{-1}}$. We define this number to be $\infty$ if $A$ is not invertible.
\end{definition}  
 
It is clear that the condition number of a matrix is at least $1$, as $1=\|A A^{-1}\|\leq\cond(A)$ for any invertible matrix $A$. Observe also that $\cond(A)=\sigma_{\max}(A)/\sigma_{\min}(A)$, where $\sigma_{\max}(A)$ and $\sigma_{\min}(A)$ denote the maximum and minimum of the set of singular values of $A$, respectively. This also justifies that $\cond(A)\geq 1$ and $\cond(A)=\infty$ if $A$ is not invertible.

We explain our setup in $\Z_m^d$ first, and then generalize to arbitrary finite abelian groups later in Section \ref{subsec:prelimGeneralFinite}.

\subsection{Exponential Riesz bases}
\label{subsec:prelimRieszBases}

Let $m$ and $d$ be positive integers, and consider $\Z_m^d$. 
Define $\chi(t)=\exp(2\pi i t)$. 
For nonempty $E,B \subseteq \Z_m^d$ of equal size, we call $(E,B)$ an \emph{equal-size pair}. For any $b\in B$, we define  exponential function $\chi_b: E\to  \mb S^1$  by 
$\chi_b(x):=\chi(\inner{x,b}/m)$, $x\in E$. Here, $\inner{x,b}$ is the usual inner product in $\mb Z_m^d$. 

\begin{definition}
We call $(E,B)$ a \emph{basis pair} if the exponential functions $\chi(\inner{\cdot,b}/m)$, $b \in B$, form a basis of $L^2(E)$.
\end{definition} 

It is known that in finite-dimensional space, every basis is a Riesz basis. Nevertheless, we are interested in the Riesz constants of subfamilies of the functions $\chi(\inner{\cdot,b}/m)$, $b \in B$, when they form a basis. This leads to the following definition.

\begin{definition}
\label{def:riesz}
Let $(E,B)$ be an equal-size pair with $\abs{E}=n$.
Let $B=\set{b_1,\dots,b_n}$, and $f_i: = \chi(\inner{\cdot,b_i}/m)$  be  functions  defined on $E$.
We say $(E,B)$  is a Riesz basis pair if there are positive and non-zero  constants $A_1$ and $A_2$ such that the following holds for all finite sequences $\{c_i\}_{i=1}^n \in \C^n$
\begin{equation}
\label{eq:rieszdef}
A_1 \sum_{i=1}^n \abs{c_i}^2 \leq \norm{\sum_{i=1}^n c_i f_i}_{L^2(E)}^2 \leq A_2 \sum_{i=1}^n \abs{c_i}^2 . 
\end{equation}

In this case, we say the exponentials $\{\chi(\inner{\cdot,b}/m): \ b\in B\}$ is a Riesz basis for $L^2(E)$ or $E$ has a exponential Riesz basis. \\

We define the \emph{optimal lower Riesz constant} $L_E(B)$ and \emph{optimal upper Riesz constant} $U_E(B)$ to be, respectively, the supremum of all constants $A_1$ and the infimum of all constants  $A_2$ such that the inequality 
\eqref{eq:rieszdef}
holds for   $\{c_i\}_{i=1}^n \in \C^n$.
\end{definition}

Notice that $0 \leq L_E(B) \leq U_E(B) < \infty$.
If $(E,B)$ is a basis pair, then since the corresponding basis is Riesz, $L_E(B) > 0$ as well. \\

An interesting question  to ask here is  if any set $E$ has an exponential Riesz basis. 
Later in Corollary \ref{cor:basispairexist} we will prove that the answer to this question is affirmative and for every $E$, there is a $B$ such that $(E,B)$ is a basis pair.  It is easy to see that the optimal  upper and lower Riesz constants are identical if in addition the orthogonality holds. First we have a definition. 

\begin{definition}
We call $(E,B)$ a \emph{spectral pair} if the exponential functions $\chi(\inner{\cdot,b}/m)$, $b \in B$, are orthogonal on $E$. 
\end{definition} 

If $(E,B)$ is a spectral pair with $\abs{E}=n$, then since the functions $f_i$ are orthogonal and have norm $\sqrt{n}$, $L_E(B) = U_E(B) = n$. It is known that not every set $E\subseteq \Z_m^d$ is a spectral set. 
For example, the spectral sets in $\Z_p^2$ are of size $1, p, p^2$ \cite{IMP}. 
We are interested in such a set $E$, given by the following definition.

\begin{definition}
A set $E$ is a \emph{spectral set} if there is a $B$ such that $(E,B)$ is a spectral pair. In this case, $B$ is called a  \emph{spectrum set} for $E$.
\end{definition}

\subsection{Tightness quantities}
\label{subsec:prelimTightnessQuants}

We define quantities that will help us to understand the behaviors of the Riesz constants $L_E(B)$ and $U_E(B)$.
Let $E=\set{e_1,\dots,e_n}$ and
$B=\set{b_1,\dots,b_n}$, indexed according to the lexicographic order in $\Z_m^d$. That is, $i< j$ implies there exists $k$ such that $(e_i)_{\ell}=(e_j)_{\ell}$ for $\ell<k$ and $(e_i)_{k}<(e_j)_{k}$, and similarly for the elements of $B$. (We could pick any other order, as the quantities we are going to define do not depend on the orderings of $E$, $B$.)

\begin{definition}[Fourier matrix]
Let $(E,B)$ be an equal-size pair with $\abs{E}=n$.
The \emph{Fourier matrix} of $(E,B)$, denoted  by $T(E,B)=(t_{i,j})$, is the $n\times n$ matrix with the  entries  $t_{i,j}:=\chi( \inner{e_i,b_j}/m), 1\leq i,j\leq n$. 
\end{definition}

Note that the $j$th column of $T(E,B)$ is the function $f_j = \chi(\inner{\cdot,b_j}/m)$ applied to each element of $E$.  With this, we can write 

$$ T(E,B) =  \left(\begin{array}{c}
\vdots \\ f_1 \\ \vdots 
\end{array}
\begin{array}{c}
\vdots \\ f_2 \\ \vdots 
\end{array}
\cdots 
\begin{array}{c}
\vdots \\ f_n \\ \vdots
\end{array} \right).
$$

The following result presents the  exact values of  optimal Riesz constants  of a pair in terms of operator norm of Fourier matrix. 

\vskip.124in

\begin{prop}
\label{prop:exprrieszconst}
Let $(E,B)$ be an equal-size pair. Let $T=T(E,B)$ be the associated Fourier matrix. Then
$$L_E(B) = \norm{T^{-1}}^{-2}, \ \  \text{and} \ \   U_E(B) = \norm{T}^2,$$
where $L_E(B) = 0$ if $T$ is not invertible. Moreover, 
\begin{equation}\label{u-t} 
 U_E(B) =\cond(T)^2 L_E(B).
\end{equation}
\end{prop}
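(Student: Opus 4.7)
The plan is to translate every quantity in sight into ordinary linear algebra on $\mathbb{C}^n$ and then read off the three identities from the operator-norm characterization of the extremal singular values of $T$.

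First I would identify the coefficient-to-function map with the action of $T$. Because $L^2(E)$ is the $n$-dimensional space equipped with the counting-measure inner product and the $j$th column of $T = T(E,B)$ is precisely $f_j$ restricted to $E$, evaluating $\sum_{i=1}^n c_i f_i$ at the points $e_1,\dots,e_n$ gives the vector $T c$, where $c=(c_1,\dots,c_n)^\top$. Consequently
\[
\Bigl\|\sum_{i=1}^n c_i f_i\Bigr\|_{L^2(E)}^{2} \;=\; \|Tc\|^{2}\qquad\text{for all } c\in\mathbb{C}^n,
\]
and the defining inequality \eqref{eq:rieszdef} is exactly $A_1 \|c\|^2 \le \|Tc\|^2 \le A_2 \|c\|^2$. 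Hence $U_E(B)$ is the smallest $A_2$ that works, and $L_E(B)$ is the largest $A_1$, both taken over nonzero $c$.

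Next I would read off $U_E(B)$. By the definition of the operator norm,
\[
U_E(B) \;=\; \sup_{c\neq 0} \frac{\|Tc\|^2}{\|c\|^2} \;=\; \|T\|^2,
\]
which is the second claim. For $L_E(B)$, split into cases. If $T$ is singular, pick $c\neq 0$ with $Tc=0$; then $\|Tc\|^2/\|c\|^2 = 0$, forcing $L_E(B)=0$, consistent with the convention $\|T^{-1}\|^{-2} = \infty^{-2} = 0$. If $T$ is invertible, substitute $y = Tc$ (so $c = T^{-1}y$) and compute
\[
L_E(B) \;=\; \inf_{c\neq 0} \frac{\|Tc\|^2}{\|c\|^2} \;=\; \inf_{y\neq 0} \frac{\|y\|^2}{\|T^{-1}y\|^2} \;=\; \Bigl(\sup_{y\neq 0} \frac{\|T^{-1}y\|^2}{\|y\|^2}\Bigr)^{-1} \;=\; \|T^{-1}\|^{-2},
\]
which is the first claim.

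Finally, the relation $U_E(B) = \cond(T)^2\, L_E(B)$ is a one-line consequence: when $T$ is invertible,
\[
\cond(T)^2\, L_E(B) \;=\; \bigl(\|T\|\,\|T^{-1}\|\bigr)^2 \cdot \|T^{-1}\|^{-2} \;=\; \|T\|^2 \;=\; U_E(B),
\]
and the singular case is covered by the natural $0\cdot\infty$ convention (equivalently, one checks the identity directly using $\sigma_{\min}$ and $\sigma_{\max}$, whose ratio is $\cond T$). There is no substantive obstacle in the argument; the only place that calls for minor care is the non-invertible case of the first identity, where one must verify that $L_E(B)=0$ matches the symbolic value $\|T^{-1}\|^{-2}=0$, and ensure the $L^2(E)$ inner product is taken with respect to counting measure so that $\|\sum c_i f_i\|_{L^2(E)} = \|Tc\|$ holds on the nose.
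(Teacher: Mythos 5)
Your proposal is correct and follows essentially the same route as the paper's own proof: rewriting the Riesz inequality as $A_1\|c\|^2 \le \|Tc\|^2 \le A_2\|c\|^2$, reading off $U_E(B)=\|T\|^2$ from the operator norm, handling the singular case separately, and using the substitution $y=Tc$ to obtain $L_E(B)=\|T^{-1}\|^{-2}$, after which \eqref{u-t} is immediate. Your explicit attention to the counting-measure identification and the $0\cdot\infty$ convention is a small additional care the paper leaves implicit, but the argument is the same.
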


\begin{proof} Notice the 
equation \eqref{eq:rieszdef} can be written as
$$A_1 \norm{c}^2 \leq \norm{Tc}^2 \leq A_2 \norm{c}^2$$
for all $c \in \C^n$.
Thus, the minimum possible $A_2$ is
$$\sup_{c \neq 0} \frac{\norm{Tc}^2}{\norm{c}^2} = \norm{T}^2.$$
On the other hand, if $T$ is not invertible, then the maximum possible $A_1$ is zero. Otherwise, this value is
$$\inf_{c \neq 0} \frac{\norm{Tc}^2}{\norm{c}^2}
= \inf_{c \neq 0} \frac{\norm{c}^2}{\norm{T^{-1}c}^2} =
\paren{\sup_{c \neq 0} \frac{\norm{T^{-1}c}^2}{\norm{c}^2}}^{-1}
= \norm{T^{-1}}^{-2}.$$
 
 The equation (\ref{u-t}) now holds from the relation 
 
$$\sqrt{U_E(B)/L_E(B)} = \norm{T}\norm{T^{-1}}=\cond(T) .$$
This completes the proof. 
\end{proof}

Notice, according to \eqref{u-t} the quantity  $\cond(T)$  captures how ``tight'' the Riesz basis is, with the tightest possible basis having $L_E(B)=U_E(B)$. This will only hold if $\cond(T)=1$ or $\|T\|= \|T^{-1}\|^{-1}$. Later, in Proposition \ref{prop:basicspectral}, we will prove  that this is only possible when the pair is a spectral pair, i.e, the orthogonality also holds. \\

We define a number measuring this tightness as follows.

\vskip.2in

\begin{definition}
\label{def:cond}
The \emph{Riesz ratio of $B$ with respect to $E$} is
$$\rho_E(B)  = \cond(T(E,B))^2 = U_E(B)/L_E(B).$$
\end{definition}

We are also interested in the absolute value of the determinant of $T(E,B)$.
It will be important later in obtaining bounds on $\rho$, $U$ and $L$ (see Sec. \ref{sec:bounds}).

\begin{definition}
\label{def:absoluteDet}
The \emph{absolute determinant of $B$ with respect to $E$} is $D_E(B) = \abs{\det T(E,B)}$.
\end{definition}

As $T(E,B)$ becomes more singular, $\rho_E(B)$ goes to infinity, while $D_E(B)$ goes to zero. Thus, $D_E(B)$ offers another way to understand how singular $T(E,B)$ is, and this corresponds to how tight the Riesz basis is, with less tight bases having more singular $T(E,B)$.
Because $D_E(B)$ is the absolute value of a linear combination of $m$th roots of unity, it is sometimes easier to work with than $\rho_E(B)$.

Later, we will show that the quantities $L_E(B)$, $U_E(B)$, $\rho_E(B)$, $D_E(B)$ each provide a way of quantifying how ``close" to being spectral a pair $(E,B)$ is.
By looking at the optimizing partner $B$, we can define the following quantities for a set $E$ which, as we later show, capture the notion of $E$ being ``close" to spectral.

\begin{definition}
\label{def:setquantities}
\
\begin{itemize}
\item The \emph{lower Riesz constant} of $E$ is $L(E) = \max_B L_E(B)$.
\item The \emph{upper Riesz constant} of $E$ is $U(E) = \min_B U_E(B)$.
\item The \emph{Riesz ratio} of $E$ is $\rho(E)=\min_B \rho_E(B)$.
\item The \emph{absolute determinant} of $E$ is $D(E) = \max_B D_E(B)$.
\end{itemize}
\end{definition}

\subsection{Existence of Riesz bases}
\label{subsec:prelimExistence}

As mentioned earlier, for every $E$, there is a $B$ such that $(E,B)$ is a basis pair. Here, we prove that fact, by first proving a more general proposition.

\begin{prop}[Basis restriction]
\label{prop:basisrestrict}
Let $S$ be a finite set, and let $m=|S|$. If $\{f_{1},\dots, f_{m}\}$ is a basis for $L^{2}(S)$, then, given a nonempty subset $E$ of $S$, there exists a subset of $\{f_{1}, \dots, f_{m}\}$ of size $|E|$ which is a basis for $L^{2}(E)$.
\end{prop}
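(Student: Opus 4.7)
The plan is to translate the statement into finite-dimensional linear algebra and invoke the equality of row rank and column rank. Identify $L^2(S)$ with $\C^S$ and $L^2(E)$ with $\C^E$, so restriction from $S$ to $E$ becomes the coordinate-projection that deletes the entries indexed by $S\setminus E$. Form the $m\times m$ matrix $M$ whose $j$th column is $f_j$ viewed as a vector in $\C^S$. The hypothesis that $\{f_1,\dots,f_m\}$ is a basis of $L^2(S)$ is equivalent to $M$ being invertible.

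Next, let $M_E$ denote the $|E|\times m$ submatrix obtained by keeping only the rows indexed by $E$. Since $M$ is invertible, its rows are linearly independent in $\C^m$; the rows of $M_E$ are a subset of these rows, hence are also linearly independent. Therefore $M_E$ has row rank $|E|$, and by equality of row rank and column rank, also column rank $|E|$. Pick any index set $J\subseteq\{1,\dots,m\}$ with $|J|=|E|$ such that the columns of $M_E$ indexed by $J$ are linearly independent.

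To conclude, observe that the $j$th column of $M_E$ is precisely $f_j|_E$ viewed as a vector in $\C^E\cong L^2(E)$. Thus $\{f_j|_E : j\in J\}$ is a linearly independent family of $|E|$ vectors in the $|E|$-dimensional space $L^2(E)$, hence a basis.

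There is no real obstacle here beyond cleanly setting up the dictionary between function-theoretic and matrix-theoretic language; the heart of the argument is the single fact that a full-rank row submatrix of an invertible matrix still admits a full-rank square column submatrix, i.e., row rank equals column rank. This same bookkeeping will presumably be reused when the proposition is applied in the next corollary to produce an exponential Riesz basis from any full character basis.
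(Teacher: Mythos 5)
Your proof is correct and is essentially the paper's own argument, differing only in that you transpose the matrix convention (points indexing rows rather than columns) and select rows of $E$ first before extracting a column subset. Both proofs reduce to the same key step: applying the equality of row rank and column rank to the rectangular submatrix obtained by restricting to $E$.
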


\begin{proof}
Observe that $\text{dim}\left(L^{2}(S)\right)=m$, and let $n=|E|$. Write $E=\{x_{1},\dots, x_{n}\}$ and write $S=\{x_{1}, \dots, x_{m}\}$, so $m\geq n$. Let $N=(n_{i,j})_{1\leq i, j\leq m}$ be the $m\times m$ matrix given by $n_{i,j}=f_{i}(x_{j})$. As $\{f_{1},\dots, f_{m}\}$ is a basis for $L^{2}(S)$, the rank of $N$ is $m$. Hence, the $m$ columns of $N$ are linearly independent. In particular, the first $n$ columns of $N$ are linearly independent. Thus, the rank of the $m\times n$ matrix $\tilde{N}=(n_{i,j})_{1\leq i\leq m, 1\leq j\leq n}$ consisting of the first $n$ columns of $N$ is $n$. As column rank equals row rank, there exist $n$ linearly independent rows of $\tilde{N}$; say $\{\left(f_{i_{j}}(x_{1}), \dots, f_{i_{j}}(x_{n})\right)\}_{j=1}^{n}$ is such a set of $n$ linearly independent rows. Then $\{f_{i_1},\dots, f_{i_n}\}$, when restricted to $E$, is a basis for $L^{2}(E)$.
\end{proof}

\begin{cor}
\label{cor:basispairexist}
Let $E \subseteq \Z_m^d$. Then there is a $B$ such that $(E,B)$ is a basis pair.
\end{cor}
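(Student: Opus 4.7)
The plan is simply to reduce the corollary to the just-proved Proposition \ref{prop:basisrestrict}. The key input I need is the well-known fact that the full family of characters on the ambient group $\Z_m^d$ forms a basis of $L^2(\Z_m^d)$. More precisely, enumerate $\Z_m^d = \{b_1, \dots, b_{m^d}\}$ and set $f_i = \chi(\inner{\cdot, b_i}/m)$, viewed as functions on $S := \Z_m^d$. These are the $m^d$ characters of the finite abelian group $\Z_m^d$, they are pairwise orthogonal on $S$ with respect to the counting inner product, and there are $\dim L^2(S) = m^d$ of them, so they constitute a (Riesz, indeed orthogonal) basis of $L^2(S)$.

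Given a nonempty $E \subseteq \Z_m^d = S$, I would then apply Proposition \ref{prop:basisrestrict} with this choice of $S$ and basis $\{f_1, \dots, f_{m^d}\}$. The proposition supplies indices $i_1, \dots, i_n$ (with $n = |E|$) such that $\{f_{i_1}, \dots, f_{i_n}\}$, restricted to $E$, is a basis for $L^2(E)$. Setting $B := \{b_{i_1}, \dots, b_{i_n}\} \subseteq \Z_m^d$ gives an equal-size pair $(E,B)$ whose associated exponentials $\chi(\inner{\cdot, b}/m)$, $b \in B$, form a basis of $L^2(E)$. By Definition of basis pair, $(E,B)$ is a basis pair, completing the proof.

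There is essentially no obstacle here: all the nontrivial content is already packaged in Proposition \ref{prop:basisrestrict}, whose proof proceeded by extracting a full-rank $n \times n$ submatrix from the $m \times n$ evaluation matrix via the equality of row and column rank. The only thing the corollary adds is the choice of the ambient orthogonal character basis on $\Z_m^d$ to plug into that general mechanism, which is why the argument collapses to a one-line invocation.
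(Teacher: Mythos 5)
Your proof is correct and is essentially identical to the paper's: both invoke the fact that the $m^d$ characters form a basis of $L^2(\Z_m^d)$ and then apply Proposition \ref{prop:basisrestrict} to extract a subfamily that is a basis of $L^2(E)$. No further comment is needed.
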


\begin{proof}
The exponential functions $\chi(\inner{\cdot,b}/m)$, $b \in \Z_m^d$, form a basis of $L^2(\Z_m^d)$. The result now follows from Proposition \ref{prop:basisrestrict}
\end{proof}

We now give an alternative proof of Corollary \ref{cor:basispairexist}.

\begin{proof}[Alternative proof] Let $n=|E|$. Then, $L^{2}(E):=\{f:E\to\mathbb{C}\}$ is $n$-dimensional, so we seek $n$ exponentials to serve as a basis. That is, we want to find $a_{1}, \dots, a_{n}$ in $\Z_m^{d}$ such that, given $f\in L^{2}(E)$, there exist unique $c_{1}, \dots, c_{n}\in\mathbb{C}$ such that
\[
f(x)=\sum_{j=1}^{n}c_{j}e_{a_j}(x)
\]
for all $x\in E$, where $e_{a_j}(x)=e^{2\pi i (a_{j}\cdot x)/m}$. Writing $E$ as $E=\{x_{1}, \dots, x_{n}\}$, we see that this is equivalent to seeking $a_{1}, \dots, a_{n}$ such that the system of $n$ equations
\[
f(x_{r})=\sum_{j=1}^{n}c_{j}e_{a_j}(x_{r}),
\]
with $1\leq r\leq n$, in the $n$ unknowns $c_{1}, \dots, c_{n}$, has a unique solution for all $\left(f(x_{1}), \dots, f(x_{n})\right)\in\mathbb{C}^{n}$. This, in turn, is equivalent to the existence of $a_{1}, \dots, a_{n}$ such that the matrix of coefficients of the $c_{j}$, denoted $M=\left(m_{j,r}\right)_{1\leq j, r\leq n}$ with $m_{j, r}=e_{a_j}(x_{r})$, satisfies $\det(M)\neq 0$. If, however, no such $a_{1}, \dots, a_{n}$ in $\Z_m^d$ exist, then for all $a=\left(a_{1}, \dots, a_{n}\right)\in\Z_m^{dn}$, we have
\[
0=\det(M)=\sum_{\sigma\in S_{n}}\text{sgn}(\sigma)e(a_{1}\cdot x_{\sigma(1)})\cdots e(a_{n}\cdot x_{\sigma(n)}).
\]
Equivalently, letting $x_{\sigma}=(x_{\sigma(1)}, \dots, x_{\sigma(n)})\in\Z_m^{dn}$, we have
\[
0=\sum_{\sigma\in S_{n}}\text{sgn}(\sigma)e(a\cdot x_{\sigma})
\]
for all $a\in\Z_m^{dn}$, where $e(a\cdot x_{\sigma})=e^{2\pi i (a\cdot x_{\sigma})/m}$, contradicting the fact that the family of functions $\{a\mapsto e(a\cdot x_{\sigma})\}_{\sigma\in S_{n}}$ is linearly independent, because it is a family of $n!$ distinct orthogonal characters of $\Z_m^{dn}$.
\end{proof} 

\subsection{General finite abelian groups}
\label{subsec:prelimGeneralFinite}

The following is a natural setting for some of our results, specifically some invariance properties in Section \ref{subsec:invarianceProps}.

Let $G$ be a finite abelian group.
We call a homomorphism from an abelian group $G$ to an abelian group $H$ a \emph{$\Z$-linear transformation}.
Thus, an invertible $\Z$-linear transformation is the same as a group isomorphism.
Call a composition of a $\Z$-linear transformation followed by a translation an \emph{affine transformation}.

Let $\widehat{G}$ be the Pontryagin dual of $G$, that is, the set of all homomorphisms from $G$ to the circle group $\mb{S}^1$.
We have $G \cong \widehat{G}$, but there is no canonical isomorphism. However, if we pick generators so that $G \cong \Z_{n_1}\times\dots\times \Z_{n_k}$, then an element $b \in \widehat{G}$ acts on each $(x_1,x_2,\dots,x_k) \in G$, $x_i \in \Z_{n_i}$, by
\begin{equation}
\label{eq:ghatacts}
b(x_1,x_2,\dots,x_k) = \chi\paren{\frac{x_1 b_1}{n_1}+\frac{x_2 b_2}{n_2}+\dots+\frac{x_k b_k}{n_k}}
\end{equation}
for some $b_i \in \Z_{n_i}$. So we can identify $b\in \widehat G$ with $(b_1,b_2,\dots,b_k) \in G$.

For the double dual $\widehat{\widehat{G}}$, there is a canonical isomorphism $G \cong \widehat{\widehat{G}}$ that identifies $x \in G$ with the evaluation map $b \mapsto b(x)$.

For nonempty $E \subseteq G$ and $B \subseteq \widehat{G}$ of equal size, we call $(E,B)$ an \emph{equal-size pair}. If $G = \Z_m^d$ and we identify $\widehat{\Z_m^d}$ with $\Z_m^d$ as above, we recover our previous definition in $\Z_m^d$.

Call an equal-size pair $(E,B)$ a \emph{basis pair} if the elements of $B$, after restriction to $E$, form a basis of $L^2(E)$. Call it a \emph{spectral pair} if these elements are orthogonal on $E$.
(Recall that $f_1, f_2 \in L^2(E)$ are orthogonal if $\sum_{x \in E} f_1(x)\overline{f_2(x)} = 0$.)
In such a case, call $E$ a \emph{spectral set} and $B$ a \emph{spectrum set} for $E$. These coincide with our previous definitions for $\mathbb{Z}_{m}^{d}$. Similarly, we can define $L_E(B)$ and $U_E(B)$ by modifying Definition \ref{def:riesz} in the same way.

Let $T(E,B)$ be the linear operator from $L^2(B)$ to $L^2(E)$ defined by
\begin{equation}
\label{eq:TEBgeneral}
T(E,B)h = \sum_{b \in B} h(b) b \in L^2(E), \quad h \in L^2(B).
\end{equation}
Let $E=\set{x_1,x_2,\dots,x_n}$ and $B=\set{b_1,b_2,\dots,b_n}$.
If we identify $f \in L^2(E)$ with the vector $(f(x_1),f(x_2),\dots,f(x_n)) \in \C^n$ and $h \in L^2(B)$ with the vector $(h(b_1),h(b_2),\dots,h(b_n)) \in \C^n$, then $T(E,B)$ is the matrix whose $(i,j)$-th entry is $b_j(x_i)$. So this definition of $T(E,B)$ coincides with our previous definition in the case  $G=\Z_m^d$.
It is easy to check that Proposition \ref{prop:exprrieszconst} still holds.
Moreover, $\rho_E(B)$ and $D_E(B)$ can be defined as before.

Definition \ref{def:setquantities} depends on what ambient group $B$ is allowed to sit inside. For example, if $E$ is a subset of of a subgroup $H$ of $G$, and we optimize over $B\subseteq \widehat{H}$, then we write $L(E;H)$ for the optimal lower Riesz constant.
In general, if the ambient group $G$ containing $E$ needs to be specified, we will write $L(E;G)$ instead of $L(E)$, and similarly for the other tightness quantities.

It is known that the characters of $G$ are linearly independent and $|G|=|\widehat{G}|$,  so $\widehat{G}$ is a basis of $L^2(G)$.
By the basis restriction proposition \ref{prop:basisrestrict}, for every $E \subseteq G$, there is $B \subseteq \widehat{G}$ such that $(E,B)$ is a basis pair. Therefore, all results from Section \ref{sec:prelim} can be generalized to this setting.

In particular, Corollary \ref{cor:basispairexist} becomes Theorem \ref{thm:introRieszExist}.

Similarly, results in Section \ref{subsec:basicProps} hold in this setting.
We give some remarks on duality.
If $(E,B)$ is an equal-size pair, then $(B,E)$ is also an equal-size pair, where we think of $E$ as an element of $\widehat{\widehat{G}}$ under the identification $\widehat{\widehat{G}} \cong G$.
To show that $T(E,B) = T(B,E)^t$, we observe that the entries $b_j(x_i) = x_i(b_j)$.
Alternatively, we can use Equation \eqref{eq:TEBgeneral} to show that
$$(T(E,B)h,f)_{L^2(E)} = (h, T(B,E)f)_{L^2(B)}$$
for every $f \in L^2(E)$ and $h \in L^2(B)$, where $(\cdot,\cdot)$ is the natural pairing $(f_1,f_2)_{L^2(E)} = \sum_{x\in E} f_1(x)f_2(x)$.

\section{Properties of tightness quantities}
\label{sec:properties}

In what follows, we investigate properties of $L_E(B)$, $U_E(B)$, $\rho_E(B)$, and $D_E(B)$ for an equal-size pair $(E,B)$, and $L(E)$, $U(E)$, $\rho(E)$, and $D(E)$ for a set $E$.

\subsection{Basic properties}
\label{subsec:basicProps}

We begin with immediate properties, whose proofs we omit.

\begin{prop}[Duality]
\label{prop:duality}
$T(E,B)=T(B,E)^t$, $L_E(B)=L_B(E)$, $U_E(B)=U_B(E)$, $\rho_E(B) = \rho_B(E)$, and $D_E(B)=D_B(E)$.
\end{prop}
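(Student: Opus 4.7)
The plan is to reduce all five equalities to the single matrix identity $T(E,B) = T(B,E)^t$ and then invoke basic linear algebra. First I would establish this identity by unpacking the definitions: if we write $E = \{x_1,\dots,x_n\}$ and $B = \{b_1,\dots,b_n\}$, then the $(i,j)$-entry of $T(E,B)$ is $b_j(x_i)$. Under the canonical isomorphism $G \cong \widehat{\widehat{G}}$, the element $x_i \in E$ acts on $b_j \in B$ by evaluation, so $x_i(b_j) = b_j(x_i)$. Therefore the $(i,j)$-entry of $T(B,E)$, which by the same definition equals $x_j(b_i)$, coincides with the $(j,i)$-entry of $T(E,B)$. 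This gives $T(B,E) = T(E,B)^t$.

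Next, I would pass this identity through the operator norm. The singular values of a matrix and of its transpose agree (this is a standard fact, since the singular values of $A$ are the square roots of the eigenvalues of $A^*A$, and $(A^t)^*(A^t) = \overline{A}A^t$ has the same eigenvalues as $A A^* = (AA^*)^t$). Hence $\|T(E,B)\| = \|T(B,E)^t\| = \|T(B,E)\|$, which by Proposition \ref{prop:exprrieszconst} yields $U_E(B) = U_B(E)$. For the lower constants, if $T(E,B)$ is invertible then so is its transpose, and $(T^t)^{-1} = (T^{-1})^t$, so the same singular-value argument gives $\|T(E,B)^{-1}\| = \|T(B,E)^{-1}\|$ and thus $L_E(B) = L_B(E)$; if $T(E,B)$ is singular then so is $T(B,E)$, and both lower constants equal zero.

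The equality $\rho_E(B) = \rho_B(E)$ is then immediate from $\rho_E(B) = U_E(B)/L_E(B)$ (with the usual convention when $L_E(B) = 0$). Finally, $D_E(B) = |\det T(E,B)| = |\det T(B,E)^t| = |\det T(B,E)| = D_B(E)$.

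There is essentially no obstacle here — the whole proposition is a formal consequence of the transpose identity together with the well-known invariance of singular values under transposition. The only mild subtlety worth stating explicitly is the use of the canonical identification $\widehat{\widehat{G}} \cong G$ to even make sense of $T(B,E)$, since $B$ sits in $\widehat{G}$ and one must view each $x \in E$ as an element of $\widehat{\widehat{G}}$; this is precisely the identification recalled in Section \ref{subsec:prelimGeneralFinite} and requires no further comment.
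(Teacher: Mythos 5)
Your proof is correct and follows essentially the route the paper intends: the paper omits the proof as immediate, but its remarks on duality in Section \ref{subsec:prelimGeneralFinite} establish exactly the transpose identity $T(E,B)=T(B,E)^t$ via the identification $\widehat{\widehat{G}}\cong G$, from which all the equalities follow by invariance of singular values (and determinant modulus) under transposition, as you argue.
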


\begin{prop}[Basis pair]
\label{Basis pair}
Let $(E,B)$ be an equal-size pair.
The following are equivalent:
\begin{itemize}
\item $(E,B)$ is a basis pair;
\item $(B,E)$ is a basis pair; 
\item $T(E,B)$ is invertible;
\item $L_E(B)>0$;
\item $\rho_E(B) < \infty$;
\item $D_E(B) > 0$.
\end{itemize} 
\end{prop}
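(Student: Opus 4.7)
The plan is to use the single pivot condition that $T(E,B)$ is invertible and show that each of the other five conditions is equivalent to it. Since $|E| = |B| = n$, the Fourier matrix $T(E,B)$ is $n \times n$, so invertibility is equivalent to linear independence of its columns (equivalently, rows).

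First I would handle the two basis-pair conditions. The pair $(E,B)$ is a basis pair precisely when the $n$ functions $\chi_b|_E$, $b\in B$, are a basis for the $n$-dimensional space $L^2(E)$; with matching dimensions this is equivalent to linear independence, hence to the columns of $T(E,B)$ being linearly independent, hence to $T(E,B)$ being invertible. For $(B,E)$, I would invoke the Duality proposition \ref{prop:duality}, which gives $T(B,E) = T(E,B)^t$; since a square matrix and its transpose have the same rank, $(B,E)$ is a basis pair if and only if $T(E,B)$ is invertible.

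Next I would handle the three numerical conditions. For $L_E(B)>0$, I would cite Proposition \ref{prop:exprrieszconst}: $L_E(B) = \norm{T^{-1}}^{-2}$ when $T=T(E,B)$ is invertible and $L_E(B) = 0$ otherwise, so $L_E(B)>0$ is exactly the invertibility of $T(E,B)$. For $\rho_E(B)<\infty$, I would use Definition \ref{def:cond} together with the convention that $\cond(T) = \infty$ exactly when $T$ is not invertible; hence $\rho_E(B) = \cond(T(E,B))^2 < \infty$ iff $T(E,B)$ is invertible. For $D_E(B)>0$, I would use Definition \ref{def:absoluteDet}: $D_E(B) = \abs{\det T(E,B)}$, which is positive exactly when $T(E,B)$ is invertible.

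There is no real obstacle here; the proposition is a repackaging of standard linear algebra together with the identifications already made in Definitions \ref{def:riesz}, \ref{def:cond}, \ref{def:absoluteDet} and Proposition \ref{prop:exprrieszconst}. The only minor care needed is to note explicitly the equal-size hypothesis, since otherwise ``basis'' would not reduce to ``linearly independent,'' and to invoke Duality for the $(B,E)$ clause rather than reproving it from scratch.
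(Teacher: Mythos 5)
Your proposal is correct, and it is exactly the standard argument the authors had in mind: the paper explicitly omits the proof of this proposition as ``immediate,'' and your reduction of every clause to invertibility of $T(E,B)$ via Propositions \ref{prop:duality} and \ref{prop:exprrieszconst} and Definitions \ref{def:cond} and \ref{def:absoluteDet} is the intended routine verification.
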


The next result shows that all  quantities  in Proposition \ref{Basis pair} have nice expressions in terms of the singular values of the matrix  $T(E,B)$. These will prove very useful in understanding their behaviors.  

\begin{prop}
\label{prop:exprsingval}
For an equal-size pair $(E,B)$ with $|E|=n$,  
let  $0 \leq \sigma_1 \leq \sigma_2 \dots\leq \sigma_n$ be the singular values of the matrix $T(E,B)$. Then $L_E(B)=\sigma_1^2$, $U_E(B)=\sigma_n^2$, $\rho_E(B)=\sigma_n^2/\sigma_1^2$, and $D_E(B) = \sigma_1 \sigma_2 \dots \sigma_n$.
Moreover, we have 
$$\sigma_1^2+\sigma_2^2+\dots+\sigma_n^2 = n^2.$$
\end{prop}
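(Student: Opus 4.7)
The plan is to chain together standard matrix facts with Proposition \ref{prop:exprrieszconst} and Definitions \ref{def:cond} and \ref{def:absoluteDet}, then identify the last claim as a Frobenius--norm computation peculiar to the fact that the entries of $T(E,B)$ are roots of unity.

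First I would recall that the operator norm of any matrix $M$ equals its largest singular value, and if $M$ is invertible then $\|M^{-1}\|$ equals the reciprocal of the smallest singular value of $M$. Applying this to $T=T(E,B)$, whose singular values are $\sigma_1 \leq \sigma_2 \leq \dots \leq \sigma_n$, Proposition \ref{prop:exprrieszconst} gives $U_E(B) = \|T\|^2 = \sigma_n^2$ immediately, and, when $T$ is invertible, $L_E(B) = \|T^{-1}\|^{-2} = \sigma_1^2$. When $T$ is singular, $\sigma_1 = 0$, and Proposition \ref{prop:exprrieszconst} sets $L_E(B) = 0$; so the formula $L_E(B) = \sigma_1^2$ still holds. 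The Riesz ratio is then $\rho_E(B) = U_E(B)/L_E(B) = \sigma_n^2/\sigma_1^2$ by Definition \ref{def:cond}, in agreement with the standard identity $\cond(T) = \sigma_n/\sigma_1$ valid for invertible $T$ (and $\rho_E(B) = \infty$ when $\sigma_1 = 0$, matching the convention $\cond(T) = \infty$ for singular $T$).

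Next, for the absolute determinant, I would use the singular value decomposition $T = U\Sigma V^{\ast}$ with $U,V$ unitary and $\Sigma = \operatorname{diag}(\sigma_1,\dots,\sigma_n)$. Then $|\det T| = |\det U||\det \Sigma||\det V^{\ast}| = \sigma_1\sigma_2\cdots\sigma_n$, which is exactly $D_E(B)$ by Definition \ref{def:absoluteDet}.

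The last identity is the only step with any content beyond linear-algebra bookkeeping, and it is where I expect a reader might pause, though it is really just a Frobenius-norm computation. I would write
\[
\sum_{i=1}^{n} \sigma_i^2 \;=\; \tr(T^{\ast}T) \;=\; \sum_{i,j=1}^{n} |t_{i,j}|^2,
\]
and then observe that every entry $t_{i,j} = \chi(\langle e_i, b_j\rangle/m)$ (or, in the general finite abelian group setting, $t_{i,j} = b_j(x_i)$) is a value of a character of $G$ and therefore has modulus $1$. Hence $|t_{i,j}|^2 = 1$ for all $i,j$, so the double sum is $n^2$. I do not expect any genuine obstacle; the main thing to get right is handling the singular case $\sigma_1 = 0$ cleanly in the first two identities so that the statement is uniformly correct whether or not $(E,B)$ is a basis pair.
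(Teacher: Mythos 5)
Your proof is correct and follows essentially the same route as the paper: both reduce everything to singular values via Proposition \ref{prop:exprrieszconst}, compute the determinant from the SVD (the paper writes $|\det T|^2 = \det T^*T = \prod\sigma_i^2$, a cosmetic variant of your argument), and obtain the trace identity from the fact that every entry of $T(E,B)$ has modulus one (the paper phrases this as each diagonal entry of $T^*T$ equaling $n$). No gaps.
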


\begin{proof}
Put $T:=T(E,B)$. We apply Proposition \ref{prop:exprrieszconst}. Then $\norm{T}=\sigma_n$. If $\sigma_1>0$, then 
  $T$ is invertible and  the singular values of $T^{-1}$ are given by  $1/\sigma_1 \geq 1/\sigma_2 \geq\dots \geq 1/\sigma_n$. Therefore $\norm{T^{-1}} = 1/\sigma_1$. Since $\sigma_1^2,\sigma_2^2,\dots,\sigma_n^2$ are the eigenvalues of $T^*T$,
$$\abs{\det T}^2 = \det T^*T = \sigma_1^2\sigma_2^2\dots\sigma_n^2.$$
Note that all the  diagonal entries of $T^*T$ are equal to $n$, therefore 
$$n^2 = \tr T^*T = \sigma_1^2+\sigma_2^2+\dots+\sigma_n^2.$$
These give all our desired results.
\end{proof}

The following result provides upper and lower  bounds for $L_E(B)$, $U_E(B)$, $\rho_E(B)$, and $D_E(B)$ in terms of  the size of $E$.

\begin{prop}
\label{prop:simplebds}
Let $(E,B)$ be an equal-size pair with $\abs{E}=n$.
Then
$0 \leq L_E(B) \leq n \leq U_E(B) \leq n^2$, $1 \leq \rho_E(B) \leq \infty$, and $0 \leq D_E(B) \leq n^{n/2}$. Moreover, each of these bounds is attained for $E$ and $B$ of arbitrarily large sizes.
\end{prop}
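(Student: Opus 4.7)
The plan is to read off all eight inequalities from Proposition \ref{prop:exprsingval}. Writing $0 \leq \sigma_1 \leq \dots \leq \sigma_n$ for the singular values of $T(E,B)$, that proposition gives $L_E(B) = \sigma_1^2$, $U_E(B) = \sigma_n^2$, $\rho_E(B) = \sigma_n^2/\sigma_1^2$, and $D_E(B) = \sigma_1\cdots\sigma_n$, together with the key identity $\sigma_1^2 + \cdots + \sigma_n^2 = n^2$. The nonnegativity $L_E(B) \geq 0$ and $D_E(B) \geq 0$ is immediate from $\sigma_i \geq 0$. Since $\sigma_1$ is the minimum and $\sigma_n$ the maximum of the $\sigma_i$, averaging the identity yields $L_E(B) \leq n \leq U_E(B)$; dropping nonleading terms gives $U_E(B) \leq n^2$. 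The bound $\rho_E(B) \geq 1$ is just $\sigma_n \geq \sigma_1$, with the value $\infty$ corresponding to $\sigma_1 = 0$. Finally, AM-GM applied to $\sigma_1^2,\dots,\sigma_n^2$ gives $(\sigma_1^2\cdots\sigma_n^2)^{1/n} \leq \tfrac{1}{n}\sum_i \sigma_i^2 = n$, so $D_E(B)^2 \leq n^n$ and hence $D_E(B) \leq n^{n/2}$.

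For attainment at arbitrarily large sizes $n$, I will exhibit two one-parameter families that together saturate all eight extremes. The first is the spectral pair $E = B = \Z_n$ inside $G = \Z_n$, whose Fourier matrix is the usual (unnormalized) DFT matrix; all of its singular values equal $\sqrt{n}$, so this single family simultaneously saturates the upper bound on $L_E(B)$, the lower bound on $U_E(B)$, the lower bound on $\rho_E(B)$, and the upper bound on $D_E(B)$. The second is the rank-one family inside $G = \Z_n^2$ given by $E = \{0\}\times\Z_n$ and $B = \Z_n\times\{0\}$ (viewed inside $\widehat{G} \cong G$). Every inner product $\inner{e_i,b_j}$ vanishes, so $T(E,B)$ is the all-ones $n\times n$ matrix, whose singular values are $n,0,\dots,0$. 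This saturates the remaining four extremes: $L_E(B) = 0$, $U_E(B) = n^2$, $\rho_E(B) = \infty$, and $D_E(B) = 0$. Both families are defined for every positive integer $n$, so each listed bound is attained for pairs of arbitrarily large size.

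I do not expect a genuine obstacle here: every inequality reduces to a one-line argument on the $\sigma_i$ once Proposition \ref{prop:exprsingval} is in hand, and attainment is witnessed by two explicit examples. The only mild bookkeeping point is verifying that the two families between them saturate all eight listed bounds (rather than only the four ``equal-$\sigma_i$'' extremes coming from spectral pairs), which is why I pair the DFT example with a rank-one all-ones example.
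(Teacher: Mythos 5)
Your proposal is correct and follows essentially the same route as the paper: the inequalities are read off from Proposition \ref{prop:exprsingval} together with the identity $\sum_i \sigma_i^2 = n^2$ (your AM--GM on the $\sigma_i^2$ is the paper's quadratic mean--geometric mean step), and attainment is witnessed by the same two families --- a full-group DFT pair and a ``perpendicular'' pair whose Fourier matrix is the all-ones matrix, exactly as in Example \ref{ex:simplebdsattained}.
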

 
\begin{proof} 
The estimations for $L_E(B)$, $U_E(B)$ and $\rho_E(B)$ are straightforward by Proposition \ref{prop:exprsingval}.
For the inequality $D_E(B) \leq n^{n/2}$, we use Proposition \ref{prop:exprsingval}  and {\it the quadratic mean-geometric mean inequality}:
$$D_E(B)^{1/n} = (\sigma_1\sigma_2 \dots\sigma_n)^{1/n}
\leq \sqrt{\frac{\sigma_1^2+\sigma_2^2+\dots+\sigma_n^2}{n}} = \sqrt{n}.$$
Example \ref{ex:simplebdsattained} below shows that these bounds can be attained for $E$ and $B$ of arbitrarily large sizes.
\end{proof}

\begin{example}
\label{ex:simplebdsattained}
Let  $G=\mb Z_m^2$, and take 
$$E=\set{(0,0),(1,0),\dots,(m-1,0)} \quad \text{and} \quad B=\set{(0,0),(0,1),\dots,(0,m-1)}.$$
It is easily verified that for this pair 
$L_E(B)=0$, $U_E(B)=n^2$, $\rho_E(B)=\infty$ and $D_E(B)=0$.
If $E=B=\Z_m^d$, we obtain $L_E(B)=U_E(B)=n$, $\rho_E(B)=1$  and $D_E(B) = n^{n/2}$.
To show this, notice  $T(E,B)/\sqrt{n}$ is a unitary matrix, so all the singular values of $T(E,B)$ are  equal to $\sqrt{n}$.
\end{example}

\begin{remark}
\label{remark:HadamardIneq}
The result that $D_E(B) \leq n^{n/2}$ in Proposition \ref{prop:simplebds} can be given an alternative proof using {\it  Hadamard's inequality}. This inequality states  that for a given matrix $M$ with    $n$ columns $\{v_i\}_{i=1}^n$,
$$|\det M|\leq \Pi_{i=1}^n \|v_i\| .$$
Applying Hadamard's inequality to the  $n\times n$  matrix $T$, and noting that $\|v_i\|^2=n$, 
 we obtain    $\abs{\det T} \leq n^{n/2}$. The traditional proof of Hadamard's inequality is similar to Proposition \ref{prop:simplebds}'s proof.
\end{remark}
 
 \vskip.124in 

We can now characterize a spectral pair in terms of our defined quantities.

\begin{prop}[Spectral pair] 
\label{prop:basicspectral} 
For a given equal-size pair $(E,B)$, 
the following are equivalent:
\begin{itemize}
\item $(E,B)$ is a spectral pair;
\item $(B,E)$ is a spectral pair;
\item $T(E,B)/\sqrt{n}$ is a unitary matrix; 
\item $L_E(B)=U_E(B)$;
\item $L_E(B)=n$;
\item $U_E(B)=n$;
\item $\rho_E(B)=1$;
\item $D_E(B) = n^{n/2}$.
\end{itemize}
\end{prop}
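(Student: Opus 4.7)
The plan is to reduce every one of the eight listed conditions to a single singular-value statement. Let $\sigma_1 \leq \sigma_2 \leq \cdots \leq \sigma_n$ denote the singular values of $T := T(E,B)$. By Proposition \ref{prop:exprsingval}, these satisfy
\[
\sigma_1^2 + \sigma_2^2 + \cdots + \sigma_n^2 = n^2,
\]
and we have $L_E(B) = \sigma_1^2$, $U_E(B) = \sigma_n^2$, $\rho_E(B) = \sigma_n^2/\sigma_1^2$, and $D_E(B) = \sigma_1 \sigma_2 \cdots \sigma_n$. I will show that each of (1)--(8) is equivalent to the single statement $(\star)$: $\sigma_1 = \sigma_2 = \cdots = \sigma_n = \sqrt{n}$.

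For (1), (2), (3), the key observation is that because $|\chi_b(x)| = 1$ for every $x \in E$ and $b \in B$, each column of $T$ has squared $\ell^2$-norm $n$, so the diagonal of $T^*T$ is identically $n$. Hence $(E,B)$ is a spectral pair iff the off-diagonal entries of $T^*T$ vanish iff $T^*T = nI$ iff $T/\sqrt{n}$ is unitary; this handles (1)$\Leftrightarrow$(3), and also yields (3)$\Leftrightarrow$$(\star)$ since a matrix is unitary exactly when all its singular values equal $1$. The equivalence (1)$\Leftrightarrow$(2) then follows from the duality identity $T(B,E) = T(E,B)^t$ (Prop.\ \ref{prop:duality}), since transposition preserves singular values and hence preserves the unitary characterization.

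Conditions (4)--(7) each pin down a single singular value, and the sum constraint $\sum \sigma_i^2 = n^2$ does the rest. If $L_E(B) = U_E(B)$ then $\sigma_1 = \sigma_n$, forcing $\sigma_1 = \cdots = \sigma_n$; the sum constraint then pins the common value to $\sqrt{n}$, giving (4)$\Leftrightarrow$$(\star)$, and (7) is equivalent to (4) since $\rho_E(B)=1$ iff $\sigma_n = \sigma_1$. For (5), $\sigma_1^2 = n$ together with $\sigma_1 \leq \sigma_i$ gives $\sigma_i^2 \geq n$ for all $i$, and $\sum \sigma_i^2 = n^2$ forces equality throughout; (6) is symmetric, using $\sigma_i \leq \sigma_n$.

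Finally, (8) follows from the AM--GM inequality applied to the squared singular values:
\[
D_E(B)^2 = \prod_{i=1}^n \sigma_i^2 \leq \paren{\frac{1}{n}\sum_{i=1}^n \sigma_i^2}^n = n^n,
\]
with equality iff all the $\sigma_i^2$ are equal, which by the sum constraint forces $(\star)$. There is no substantive obstacle: the entire statement is a consequence of Proposition \ref{prop:exprsingval} together with AM--GM. The only point requiring care is the observation driving (1)$\Leftrightarrow$(3), namely that the diagonal of $T^*T$ is automatically $nI$, which is what makes spectrality of the pair equivalent to unitarity of $T/\sqrt{n}$ and thereby links the orthogonality condition to the singular-value picture used throughout the remaining equivalences.
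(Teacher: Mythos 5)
Your proof is correct and follows essentially the same route as the paper's: both reduce all eight conditions, via Proposition \ref{prop:exprsingval} and the trace identity $\sum\sigma_i^2=n^2$, to the statement that every singular value of $T(E,B)$ equals $\sqrt{n}$, which is then identified with unitarity of $T(E,B)/\sqrt{n}$ and hence with orthogonality of the columns. The paper states this in one line; you have simply written out the individual equivalences (including the AM--GM step for $D_E(B)$, which mirrors the argument already used in Proposition \ref{prop:simplebds}) in full detail.
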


\begin{proof}
By Proposition \ref{prop:exprsingval}, the last five conditions are all equivalent to the statement that all singular values of $T(E,B)$ are the same and are equal  to $\sqrt{n}$. This, in turn, is equivalent to $T(E,B)/\sqrt{n}$ being a unitary matrix. By definition, this is equivalent to $(E,B)$ being a spectral pair. From this, we obtain duality between $E$ and $B$.
\end{proof}

From Propositions \ref{prop:simplebds} and \ref{prop:basicspectral}, notice that a spectral pair $(E,B)$ has the largest $L_E(B)$, smallest $U_E(B)$, smallest $\rho_E(B)$, and largest $D_E(B)$ among all pairs of the same sizes.
Thus, we can think of a pair $(E,B)$ with large $L_E(B)$, small $U_E(B)$, small $\rho_E(B)$, and large $D_E(B)$ as being ``close to spectral.''

From Definition \ref{def:setquantities}, we can now see that $L(E)$, $U(E)$, $\rho(E)$, and $D(E)$ are defined to be the corresponding quantities for a pair $(E,B)$ that are closest to being spectral with respect to these quantities.

Recall that the definition of a spectral pair means that $\chi(\inner{\cdot,b}/m)$, $b \in B$, are orthogonal on $E$. The following proposition makes the intuition of ``being close to spectral'' precise.

\begin{prop}
\label{prop:closeortho}
Fix $n$, and let $(E,B)$ be an equal-size pair with $\abs{E}=n$.
Any of the following statements implies all others:
\begin{itemize}
\item The exponential functions $\chi(\inner{\cdot,b}/m)$, $b \in B$, are close to being orthogonal on $E$;
\item $L_E(B)$ is close to $n$;
\item $U_E(B)$ is close to $n$;
\item $\rho_E(B)$ is close to $1$;
\item $D_E(B)$ is close to $n^{n/2}$.
\end{itemize}
More precisely, define the angle $\theta \in [0,\pi/2]$ between two nonzero vectors $c,d \in \C^n$ by $$\cos \theta = \frac{\abs{ \sum_{i=1}^n c_i \overline{d_i}}}{\norm{c}\norm{d}}.$$
For a matrix $A$ with nonzero columns, define $\operatorname{ortho}(A)$ to be the maximum of $\pi/2-\theta$ over all angles $\theta$ between two distinct columns of $A$.
Then, if any of the following quantities is close to zero, then all others must also be close to zero: $\operatorname{ortho}(T(E,B))$, $L_E(B)-n$, $U_E(B)-n$, $\rho_E(B)-1$, and $D_E(B) - n^{n/2}$.
\end{prop}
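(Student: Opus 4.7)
The plan is to reduce all five conditions to the single statement ``all singular values of $T := T(E,B)$ are close to $\sqrt{n}$.'' By Proposition~\ref{prop:exprsingval}, the four quantities $L_E(B)$, $U_E(B)$, $\rho_E(B)$, $D_E(B)$ depend only on the singular values $(\sigma_1,\ldots,\sigma_n)$, subject to the fixed constraint $\sigma_1^2+\cdots+\sigma_n^2=n^2$, and by Proposition~\ref{prop:basicspectral} each attains its respective extremal value ($n$, $n$, $1$, or $n^{n/2}$) precisely when every $\sigma_i=\sqrt{n}$.

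For these four quantities I would apply a straightforward compactness argument. The constraint set $K = \{(\sigma_1,\ldots,\sigma_n)\in\mathbb{R}^n : 0\leq\sigma_1\leq\cdots\leq\sigma_n,\ \sum\sigma_i^2=n^2\}$ is compact, each of $|\sigma_1^2 - n|$, $|\sigma_n^2 - n|$, $|\sigma_n^2/\sigma_1^2 - 1|$, and $|\sigma_1\cdots\sigma_n - n^{n/2}|$ is continuous on $K$ (with the third extended to $+\infty$ at $\sigma_1 = 0$), and each has the singleton zero set $\{(\sqrt{n},\ldots,\sqrt{n})\}$. A standard compactness argument then yields: for every $\epsilon > 0$ there exists $\delta > 0$ such that any of these four functions being less than $\delta$ forces the singular value tuple into an $\epsilon$-ball around $(\sqrt{n},\ldots,\sqrt{n})$, hence forces each of the other three to also be less than $\epsilon$.

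To integrate $\operatorname{ortho}(T)$, I would examine the Gram matrix $G := T^*T$. Its diagonal entries all equal $n$ (each column of $T$ has squared norm $n$), and its off-diagonal entries $G_{jk}$ satisfy $|G_{jk}| = n \cos\theta_{jk}$, so $\operatorname{ortho}(T)$ is small iff every off-diagonal entry of $G - nI$ is small in absolute value. Since the diagonal of $G - nI$ vanishes, entrywise smallness on the finite-dimensional space of $n \times n$ matrices is equivalent to operator-norm smallness; and since $G - nI$ is Hermitian with eigenvalues $\sigma_i^2 - n$, its operator norm equals $\max_i |\sigma_i^2 - n|$. Thus $\operatorname{ortho}(T)$ is small iff $\max_i |\sigma_i^2 - n|$ is small iff the singular value tuple lies near $(\sqrt{n},\ldots,\sqrt{n})$, which by the previous paragraph closes the chain of equivalences.

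The main obstacle is essentially bookkeeping: the back-and-forth between entrywise and operator-norm closeness for $G - nI$, and the quantitative extraction of $\delta$ from $\epsilon$ via compactness. Neither step is difficult in its own right, but stating the five-way equivalence cleanly does require one to spell out the compactness step and confirm that the control is genuinely uniform over $K$, rather than merely pointwise. Crucially, the argument depends on nothing special about the ambient group $G$ or the precise roots of unity appearing as entries of $T$---only on the normalization $\norm{\text{column}}^2 = n$ and the singular value calculus of Proposition~\ref{prop:exprsingval}.
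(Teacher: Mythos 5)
Your proof is correct and follows the same overall architecture as the paper's: both reduce all five conditions to the single statement that every singular value of $T(E,B)$ is close to $\sqrt{n}$, using Proposition \ref{prop:exprsingval} together with the trace identity $\sum_i\sigma_i^2=n^2$, and both pass through the Gram matrix $T^*T$ to connect this to $\operatorname{ortho}$. The one place you genuinely diverge is the direction showing that closeness of all singular values to $\sqrt{n}$ forces $\operatorname{ortho}(T)$ to be small: the paper argues the contrapositive by exhibiting an explicit test vector $\omega e_i+e_j$ that dephases the inner product of two non-nearly-orthogonal columns, yielding the quantitative bound $\norm{T}^2\geq n(1+\cos\theta)$ and hence $U_E(B)$ bounded away from $n$; you instead observe that $T^*T-nI$ is Hermitian with eigenvalues $\sigma_i^2-n$, so its operator norm is $\max_i\abs{\sigma_i^2-n}$, which dominates every off-diagonal entry $\abs{(T^*T)_{jk}}=n\cos\theta_{jk}$. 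Your version delivers a clean two-sided equivalence in one stroke; the paper's test-vector computation has the compensating advantage of producing an explicit lower bound on $U_E(B)$ in terms of the worst angle. You also spell out, via compactness of the constrained singular-value set, the uniformity that the paper merely asserts when it says the four numeric quantities are near their extremes ``exactly when'' all singular values are near $\sqrt{n}$; that elaboration, including treating $\rho$ as a continuous map into $[1,\infty]$ at $\sigma_1=0$, is correct and fills in a step the paper leaves implicit.
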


\begin{proof}
Let $T=T(E,B)$.
Proposition \ref{prop:exprsingval} implies that each of $L_E(B)-n$, $U_E(B)-n$, $\rho_E(B)-1$, and $D_E(B) - n^{n/2}$ is close to zero exactly when all singular values of $T(E,B)$ are close to $\sqrt{n}$.
This proves the equivalence between these statements.

Now if $\operatorname{ortho}(T)$ is close to zero, then $T^*T$ is close to $nI$. By continuity, all singular values of $T$ are close to $\sqrt{n}$. 
Conversely, suppose that $\operatorname{ortho}(T)$ is not close to zero. Specifically, assume that the angle $\theta$ between columns $i$ and $j$ is not close to $\pi/2$.
Let $e_1,e_2,\dots,e_n$ be the standard basis of $\C^n$.
Pick $\omega \in \C$ of norm one such that
$$\omega \sum_{k=1}^n (Te_i)_k\overline{(Te_j)_k} = \cos\theta \norm{Te_i} \norm{Te_j},$$
that is, $\omega$ should ``dephase'' the inner product.
Then
$$\norm{T(\omega e_i+ e_j)}^2 = \norm{Te_i}^2+\norm{Te_j}^2 + 2 \cos\theta \norm{Te_i}\norm{Te_j}
= 2n + 2n\cos\theta,$$
while $\norm{\omega e_i+e_j} = \sqrt{2}$.
So $\norm{T} \geq \sqrt{n(1+\cos\theta)}$.
Because $\cos\theta$ is not close to zero, $\norm{T}$ is not close to $\sqrt{n}$, so $U_E(B)$ is not close to $n$.
\end{proof}

We can now put bounds on $L(E)$, $U(E)$, $\rho(E)$, and $D(E)$ and characterize spectral sets in a similar way.

\begin{prop}
Let $\abs{E}=n$.
Then $0 < L(E) \leq n$, $n \leq U(E) < n^2$ if $n>1$, $\sqrt{U(E)/L(E)} \leq \rho(E) < \infty$, and $0 < D(E) \leq n^{n/2}$. 
\end{prop}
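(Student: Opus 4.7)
The plan is to dispatch the proposition one inequality at a time, using only Proposition \ref{prop:simplebds}, the existence result Corollary \ref{cor:basispairexist}, the basis-pair characterization Proposition \ref{Basis pair}, and the singular-value formulas of Proposition \ref{prop:exprsingval}.

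First, the non-strict bounds $L(E)\leq n$, $U(E)\geq n$, $\rho(E)\geq 1$, and $D(E)\leq n^{n/2}$ are immediate consequences of Proposition \ref{prop:simplebds}: for every equal-size pair $(E,B)$ one has $L_E(B)\leq n$, $U_E(B)\geq n$, $\rho_E(B)\geq 1$, and $D_E(B)\leq n^{n/2}$, and taking the max or min over $B$ preserves these.

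Second, for the strict positivity statements $L(E)>0$, $\rho(E)<\infty$, and $D(E)>0$, I would invoke Corollary \ref{cor:basispairexist} (in the general finite abelian group form established in Section \ref{subsec:prelimGeneralFinite}) to produce a single $B\subseteq\widehat{G}$ such that $(E,B)$ is a basis pair. Proposition \ref{Basis pair} then converts this into $L_E(B)>0$, $\rho_E(B)<\infty$, and $D_E(B)>0$; the strict bounds on $L(E)$, $\rho(E)$, and $D(E)$ follow after taking max/min.

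Third, for $U(E)<n^2$ when $n>1$, I would reuse the same basis-pair witness $B$ from the previous step. By Proposition \ref{prop:exprsingval}, the singular values $0<\sigma_1\leq\cdots\leq\sigma_n$ of the (invertible) matrix $T(E,B)$ satisfy $\sigma_1^2+\cdots+\sigma_n^2=n^2$ and $U_E(B)=\sigma_n^2$. Since $\sigma_1>0$ by invertibility and $n>1$, we obtain $\sigma_n^2<n^2$, and hence $U(E)\leq U_E(B)<n^2$. For $\sqrt{U(E)/L(E)}\leq\rho(E)$, I would choose $B^{*}$ realizing the minimum in $\rho(E)=\min_B\rho_E(B)$. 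Then $U(E)\leq U_E(B^{*})$ and $L(E)\geq L_E(B^{*})$, so
$$\frac{U(E)}{L(E)}\leq \frac{U_E(B^{*})}{L_E(B^{*})}=\rho_E(B^{*})=\rho(E),$$
and combining with $\rho(E)\geq 1$ yields $\sqrt{U(E)/L(E)}\leq\sqrt{\rho(E)}\leq\rho(E)$.

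The only mild subtlety---scarcely an obstacle---is the deliberate reuse of a \emph{single} basis-pair witness $B$ throughout the strict-inequality arguments, so that the strict positivity of $\sigma_1$ supplied by invertibility of $T(E,B)$ is simultaneously available for $\rho(E)<\infty$, $D(E)>0$, $L(E)>0$, and $U(E)<n^2$. Everything else reduces to routine bookkeeping with the singular-value identities already in hand.
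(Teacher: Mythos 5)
Your proof is correct and follows essentially the same route as the paper, which simply cites Proposition \ref{prop:simplebds} for the non-strict bounds and Corollary \ref{cor:basispairexist} for the strict ones; you have merely filled in the routine details (the singular-value identity for $U(E)<n^2$ and the optimizing witness $B^{*}$ for the $\rho$ inequality). No gaps.
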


\begin{proof}
Straightforward by the definitions and Proposition \ref{prop:simplebds}.
The strict inequalities follow from the fact that for every $E$, there is a $B$ such that $(E,B)$ is a basis pair (Cor. \ref{cor:basispairexist}).
\end{proof}

The proposition below shows that a spectral set has the largest $L(E)$, smallest $U(E)$, smallest $\rho(E)$, and largest $D(E)$ among all sets of the same sizes. We omit its proof.

\begin{prop}[Spectral set]
\label{prop:basicPropSpec}
Let $\abs{E}=n$.
The following are equivalent:
\begin{itemize}[noitemsep]
\item $E$ is spectral;
\item $L(E)=U(E)$;
\item $L(E) = n$;
\item $U(E) = n$;
\item $\rho(E) = 1$;
\item $D(E) = n^{n/2}$.
\end{itemize}
\end{prop}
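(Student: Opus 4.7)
The plan is to reduce everything to the pair-level characterization already proved in Proposition \ref{prop:basicspectral}, using only two additional ingredients: the uniform bounds of Proposition \ref{prop:simplebds} and the fact that in Definition \ref{def:setquantities} the extrema are taken over a \emph{finite} family (since $\widehat{G}$ is finite, there are only finitely many subsets $B \subseteq \widehat{G}$ with $|B| = n$). In particular, each of $L(E)$, $U(E)$, $\rho(E)$, $D(E)$ is attained at some $B^{*}$, so I can freely replace ``$L(E) = n$'' by ``there exists $B$ with $L_{E}(B) = n$,'' and similarly for the other quantities.

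Next I would handle the four ``extremum'' equivalences in parallel, since they all follow the same pattern. Taking $L(E) = n$ as the archetype: by Proposition \ref{prop:simplebds} we have $L_{E}(B) \leq n$ for every equal-size pair, so $L(E) \leq n$, and equality holds if and only if some $B$ achieves $L_{E}(B) = n$. By Proposition \ref{prop:basicspectral} this in turn is equivalent to $(E,B)$ being a spectral pair, which by definition is equivalent to $E$ being spectral. Identical reasoning, flipping the direction of the inequality in each case, handles $U(E) = n$ (using $U_{E}(B) \geq n$), $\rho(E) = 1$ (using $\rho_{E}(B) \geq 1$), and $D(E) = n^{n/2}$ (using $D_{E}(B) \leq n^{n/2}$).

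Finally, the sandwich condition $L(E) = U(E)$ is a short consequence of the bounds in Proposition \ref{prop:simplebds}, which give $L(E) \leq n \leq U(E)$: equality between $L(E)$ and $U(E)$ forces both to equal $n$, reducing to one of the cases already handled, and conversely $E$ spectral yields $L(E) = n = U(E)$ from those cases. I do not anticipate any real obstacle here; the argument is essentially a bookkeeping exercise that splices together Propositions \ref{prop:basicspectral} and \ref{prop:simplebds}. The only step worth double-checking is that the extrema in Definition \ref{def:setquantities} are genuinely attained rather than merely approached, but this is immediate from the finiteness of $\widehat{G}$.
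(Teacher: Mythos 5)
Your proof is correct and is exactly the argument the paper intends: the paper omits the proof of Proposition~\ref{prop:basicPropSpec}, but the intended route is precisely this splicing of the pair-level characterization (Prop.~\ref{prop:basicspectral}) with the uniform bounds (Prop.~\ref{prop:simplebds}) and the attainment of the extrema in Definition~\ref{def:setquantities}. Nothing further is needed.
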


\subsection{Invariance properties}
\label{subsec:invarianceProps}

Our defined quantities also have invariance properties which we can exploit. We begin with translational invariance.

\begin{prop}[Translational invariance] 
\label{prop:invtrans}
\
\begin{itemize}
\item $L_E(B)$, $U_E(B)$, $\rho_E(B)$, and $D_E(B)$ are invariant under translations of $E$ and $B$.
\item $L(E)$, $U(E)$, $\rho(E)$, and $D(E)$ are invariant under translations of $E$.
\end{itemize}
\end{prop}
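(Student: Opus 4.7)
The plan is to show that translating $E$ or $B$ only multiplies the Fourier matrix $T(E,B)$ by diagonal unitary matrices, and then to invoke Proposition \ref{prop:exprsingval} to conclude that the singular values (and hence all four quantities) are unchanged. The key identity is the homomorphism property: for any $b \in \widehat{G}$ and any $x,x_0 \in G$,
\[
b(x+x_0) = b(x_0)\,b(x),
\]
and dually, for $x \in G \cong \widehat{\widehat{G}}$ and $b, b_0 \in \widehat{G}$,
\[
(b+b_0)(x) = b_0(x)\,b(x).
\]

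Using these, I would argue as follows. Fix an equal-size pair $(E,B)$ with $E=\{x_1,\dots,x_n\}$ and $B=\{b_1,\dots,b_n\}$, and let $x_0 \in G$, $b_0 \in \widehat{G}$. The first identity gives
\[
T(E+x_0, B) = T(E,B)\,D_R, \qquad D_R := \operatorname{diag}(b_1(x_0),\dots,b_n(x_0)),
\]
and the second gives
\[
T(E, B+b_0) = D_L\,T(E,B), \qquad D_L := \operatorname{diag}(b_0(x_1),\dots,b_0(x_n)).
\]
Since characters of $G$ take values in $\mb{S}^1$, both $D_L$ and $D_R$ are unitary. Multiplying by a unitary matrix on either side preserves the full list of singular values, so by Proposition \ref{prop:exprsingval} each of $L_E(B)$, $U_E(B)$, $\rho_E(B)$, and $D_E(B)$ is unchanged under these translations. (Alternatively, $|\det D_L|=|\det D_R|=1$ handles $D_E(B)$ directly, and the Rayleigh quotient expressions handle the rest.) This proves the first bullet.

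For the second bullet, fix $x_0 \in G$ and write $E' = E + x_0$. As $B$ ranges over all size-$|E|$ subsets of $\widehat{G}$, the set of Fourier matrices $\{T(E',B)\}_B = \{T(E,B)\,D_R(B)\}_B$ has the same singular value spectra (as multisets) as $\{T(E,B)\}_B$, hence the optimization problems defining $L(E')$, $U(E')$, $\rho(E')$, and $D(E')$ coincide with those defining $L(E)$, $U(E)$, $\rho(E)$, and $D(E)$, respectively. The main step to verify cleanly is simply the diagonal factorization above; once that is in hand, the rest is a one-line consequence of Proposition \ref{prop:exprsingval} together with the definitions in Definition \ref{def:setquantities}. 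There is no real obstacle here, only the need to be careful about the two sided action of $\widehat{G}$ on $G$ and the resulting left-versus-right diagonal multiplications.
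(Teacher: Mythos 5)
Your proposal is correct and follows essentially the same route as the paper: both factor the translated Fourier matrix as the original matrix multiplied by diagonal unitary matrices (the paper does both translations at once, picking up an extra harmless scalar $y(x)$, while you compose the two translations separately), and both then invoke Proposition \ref{prop:exprsingval} to conclude that the singular values, and hence all four quantities, are preserved. The passage to the set-level quantities via the unchanged optimization over $B$ matches the paper as well.
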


\begin{proof}
Translation of $E$ and $B$ changes $T=T(E,B)$ to $D_1 T D_2$, where $D_i$ are diagonal
with diagonal entries of absolute value 1.
Specifically, if $E=\set{e_1,\dots,e_n}$ and $B=\set{b_1,\dots,b_n}$,
$$T(E+x,B+y)=y(x)\, \text{diag}[y(e_1),\dots,y(e_n)]\,T(E,B)\,\text{diag}[b_1(x),\dots,b_n(x)].$$
Notice that the $D_i$ are unitary, so the singular values of $D_1 T D_2$ and $T$ are equal.
By Proposition \ref{prop:exprsingval}, the results for pairs $(E,B)$ follow, and these imply the corresponding results for sets $E$.
\end{proof}

The rest of the invariance results are best stated in the setting of a finite abelian group $G$ (see Sec. \ref{subsec:prelimGeneralFinite}), so we will do so.
First, Proposition \ref{prop:invtrans} holds in this setting with the same proof.

Recall that a group homomorphism in this setting is called a $\Z$-linear transformation. For $G=\Z_p^d$, $p$ a prime, this is a linear transformation in the sense of a vector space.

\begin{prop}
\label{prop:invlinear}
Let $E \subseteq G$. The quantities $L(E)$, $U(E)$, $\rho(E)$, and $D(E)$ are invariant under invertible $\Z$-linear transformations.
\end{prop}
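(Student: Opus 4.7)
The plan is to transport the optimization defining each set-level quantity through the automorphism, using the dual map on $\widehat{G}$. Concretely, for any invertible $\Z$-linear transformation $\phi: G \to G$, I would define the dual automorphism $\psi: \widehat{G} \to \widehat{G}$ by $\psi(b)(x) := b(\phi^{-1}(x))$; this is well-defined as a group homomorphism (composition of a character with a $\Z$-linear map), and is a bijection because $\phi^{-1}$ is. Set $E' := \phi(E)$, and for any $B \subseteq \widehat{G}$ with $|B| = |E|$, let $B' := \psi(B)$.

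The key observation is that the Fourier matrices match: for $b \in B$, writing $b' := \psi(b) \in B'$ and $x' := \phi(x)$ for $x \in E$, we have $b'(x') = b(\phi^{-1}(\phi(x))) = b(x)$. With orderings chosen so that $x_i \mapsto \phi(x_i)$ and $b_j \mapsto \psi(b_j)$ preserve indices, the matrices $T(E', B')$ and $T(E, B)$ are literally equal entry-by-entry. Even if one uses the fixed lexicographic ordering, the two matrices differ only by a permutation of rows and a permutation of columns, which preserves the set of singular values.

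By Proposition \ref{prop:exprsingval}, the quantities $L_E(B)$, $U_E(B)$, $\rho_E(B)$, and $D_E(B)$ depend only on the singular values of $T(E, B)$, so $L_{E'}(B') = L_E(B)$, and likewise for $U$, $\rho$, $D$. Since $\psi$ is a bijection on $\widehat{G}$, as $B$ ranges over all subsets of $\widehat{G}$ of size $|E|$, $B' = \psi(B)$ ranges over exactly the same family. Taking the appropriate maximum or minimum over $B$ in Definition \ref{def:setquantities} then yields $L(E') = L(E)$, $U(E') = U(E)$, $\rho(E') = \rho(E)$, and $D(E') = D(E)$, proving invariance.

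There is no serious obstacle here; the argument is essentially a bookkeeping exercise once the correct dual map is identified. The only mild subtlety is checking that $\psi$ sends $\widehat{G}$ bijectively to itself (rather than to some other dual), which comes directly from the fact that $\phi$ is an automorphism of $G$ and Pontryagin duality is functorial. Note also that Proposition \ref{prop:invtrans} already covers translations, so together with this result, the tightness quantities for a set are invariant under all invertible affine transformations of $G$.
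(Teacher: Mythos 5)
Your proof is correct and follows essentially the same route as the paper: the paper's own proof of this proposition is a one-line appeal to the quantities being definable without choosing generators, and the explicit dual-map computation you carry out (defining $\psi(b)=b\circ\phi^{-1}$, checking $\psi(b)(\phi(x))=b(x)$, and transporting the optimization over $B$ through the bijection $\psi$) is exactly the content of the paper's subsequent Proposition \ref{prop:invlineareb}. You have simply written out in full the bookkeeping the paper defers to that neighboring result.
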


\begin{proof}
Follows because these quantities can be defined without choosing generators for $G$.
\end{proof}

In fact, we can say more.
Notice that a $\Z$-linear transformation $A: G \to G$ induces the transpose $A^t: \widehat{G} \to \widehat{G}$ defined by $A^t(b) = b \circ A$.
For $G=\Z_p^d$, $p$ a prime, this is the usual transpose of a linear transformation. If $A$ is invertible, we can check that $A^t$ is invertible with $(A^t)^{-1} = (A^{-1})^t$; denote this by $A^{-t}$.
We have the following.

\begin{prop}
\label{prop:invlineareb}
Let $(E,B)$ be an equal-size pair. For an invertible $\Z$-linear transformation $A:G \to G$, let $(E',B')=(AE,A^{-t}B)$.
Then $L_{E'}(B') = L_E(B)$, $U_{E'}(B') = U_E(B)$, $\rho_{E'}(B')=\rho_E(B)$, and $D_{E'}(B') = D_E(B)$.
\end{prop}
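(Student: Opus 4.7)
The plan is to show that the Fourier matrix $T(E',B')$ coincides with $T(E,B)$ up to permutation of rows and columns, which forces all singular values to agree and hence gives the desired invariance via Proposition \ref{prop:exprsingval}.

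The key computation is to observe how the transpose $A^{-t}$ interacts with $A$ on dual pairings. By the very definition of $A^{-t}$, for every $b \in \widehat{G}$ and $x \in G$, we have
\[
(A^{-t}b)(Ax) \;=\; (b \circ A^{-1})(Ax) \;=\; b(A^{-1}Ax) \;=\; b(x).
\]
Thus, writing $E=\{x_1,\dots,x_n\}$ and $B=\{b_1,\dots,b_n\}$ and taking the induced listings $E'=\{Ax_1,\dots,Ax_n\}$, $B'=\{A^{-t}b_1,\dots,A^{-t}b_n\}$, the $(i,j)$-entry of the matrix built from these orderings is exactly $(A^{-t}b_j)(Ax_i)=b_j(x_i)$, which is the $(i,j)$-entry of $T(E,B)$.

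Of course, the Fourier matrix in Definition \ref{def:setquantities} is built using a fixed (e.g.\ lexicographic) ordering of the underlying sets, and $AE$, $A^{-t}B$ need not inherit the same ordering. However, reordering the elements of $E'$ or $B'$ amounts to multiplying the matrix above on the left or right by a permutation matrix. So $T(E',B')$ differs from $T(E,B)$ by a product $P_1 T(E,B) P_2$ with $P_1,P_2$ permutation matrices, in particular unitary. Hence $T(E',B')$ and $T(E,B)$ have identical singular values, and Proposition \ref{prop:exprsingval} yields $L_{E'}(B')=L_E(B)$, $U_{E'}(B')=U_E(B)$, $\rho_{E'}(B')=\rho_E(B)$, and $D_{E'}(B')=D_E(B)$.

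There is no real obstacle here; the whole proposition rests on the single algebraic identity $(A^{-t}b)(Ax)=b(x)$, which is precisely why the choice of $A^{-t}$ (rather than $A$) on the dual side is the correct one to pair with $A$ on $E$. The role of permutation matrices is only a bookkeeping issue about how one lists elements of $E$ and $B$, and plays no substantive part in the argument.
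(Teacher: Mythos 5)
Your proof is correct and follows essentially the same route as the paper: the single identity $(A^{-t}b_j)(Ax_i) = b_j(A^{-1}Ax_i) = b_j(x_i)$ showing the Fourier matrices have the same entries, so the tightness quantities agree. Your extra remark about permutation matrices is harmless bookkeeping that the paper handles implicitly by noting these quantities are independent of the orderings of $E$ and $B$.
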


\begin{proof}
The entries of the Fourier matrices are the same:
$$A^{-t}b_j(Ax_i) = b_j\circ A^{-1}(Ax_i) = b_j(x_i),$$
so $T(E,B)$ and $T(E',B')$ are essentially the same linear transformation.

Alternatively, we can use Equation \eqref{eq:TEBgeneral} to check this fact in a coordinate-free way. Specifically, we can show that for every $h \in L^2(B')$,
$$T(E,B)(h \circ A^{-t}) \circ A^{-1} = T(E',B')h.$$
\end{proof}

Recall that an affine transformation is the composition of a $\Z$-linear transformation followed by a translation. 

\begin{cor}
\label{cor:invaffine}
Let $E \subseteq G$. The quantities $L(E)$, $U(E)$, $\rho(E)$, and $D(E)$ are invariant under invertible affine transformations.
\end{cor}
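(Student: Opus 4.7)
The plan is to observe that the corollary follows immediately by composing the two preceding propositions, since an invertible affine transformation factors as an invertible $\Z$-linear transformation followed by a translation. Specifically, let $\phi: G \to G$ be an invertible affine transformation, so there is an invertible $\Z$-linear transformation $A: G \to G$ and an element $t \in G$ such that $\phi(x) = A(x) + t$ for all $x \in G$.

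First I would apply Proposition \ref{prop:invlinear} to conclude that $L(A(E)) = L(E)$, $U(A(E)) = U(E)$, $\rho(A(E)) = \rho(E)$, and $D(A(E)) = D(E)$. Then, since $\phi(E) = A(E) + t$, I would apply Proposition \ref{prop:invtrans} to the set $A(E)$ to get $L(\phi(E)) = L(A(E) + t) = L(A(E))$, and similarly for $U$, $\rho$, and $D$. Chaining these two equalities yields the claim.

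There is really no main obstacle here, as all the work has been done in the two preceding propositions; the only content is the observation that the definition of affine transformation given just before the corollary is precisely the composition we need. If one wanted to be more explicit, one could write the argument as a single displayed chain of equalities, for example
\begin{equation*}
\rho(\phi(E)) = \rho(A(E) + t) = \rho(A(E)) = \rho(E),
\end{equation*}
with the first equality by definition of $\phi$, the second by translational invariance, and the third by $\Z$-linear invariance. The analogous computations for $L$, $U$, and $D$ are identical in structure.
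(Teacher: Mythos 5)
Your proposal is correct and matches the paper's proof exactly: the paper simply cites Propositions \ref{prop:invtrans} and \ref{prop:invlinear}, and your write-up is just a more explicit version of that same composition argument. Nothing further is needed.
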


\begin{proof}
Follows from Propositions \ref{prop:invtrans} and \ref{prop:invlinear}.
\end{proof}

Recall also the notation $L(E;G)$, etc., which means $L(E)$, etc., when the group $G$ needs to be specified.

\begin{prop}[Affine restriction]
\label{prop:affinerestrict}
Suppose that $H$ is a direct summand of $G$, that is, there is a subgroup $K \subseteq G$ such that $G=H \oplus K$. Let $E$ be contained in the coset $H+a$, thought of as a group with the group structure inherited from $H$. Then
$L(E;H+a)=L(E;G)$, $U(E;H+a)=U(E;G)$, $\rho(E;H+a)=\rho(E;G)$, and $D(E;H+a)=D(E;G)$.
\end{prop}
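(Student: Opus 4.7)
The plan is to reduce to the case $a=0$ by translational invariance and then transport the tightness quantities from $\widehat{G}$ to $\widehat{H}$ using the direct-sum decomposition. First, by Proposition~\ref{prop:invtrans}, each of $L$, $U$, $\rho$, $D$ is unchanged when $E$ is translated by $-a$, both within the coset $H+a$ (identified with $H$ via $h+a\leftrightarrow h$) and within $G$. This reduces the proposition to the case $a=0$: assuming $F=E-a\subseteq H$ with $G=H\oplus K$, I need only show $L(F;H)=L(F;G)$ and the three analogous equalities for $U$, $\rho$, $D$. The decomposition $G=H\oplus K$ induces an identification $\widehat{G}\cong\widehat{H}\oplus\widehat{K}$ under which the restriction-to-$H$ map $\pi:\widehat{G}\to\widehat{H}$ is projection onto the first factor, hence surjective. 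Since the $K$-coordinate of any $x\in F\subseteq H$ vanishes, $b(x)=\pi(b)(x)$ for every $b\in\widehat{G}$, so each column $\chi_b|_F$ of a Fourier matrix $T(F,B)$ depends on $b$ only through $\pi(b)\in\widehat{H}$.

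This gives a bijective correspondence between size-$n$ subsets $B'\subseteq\widehat{H}$ and size-$n$ subsets $B\subseteq\widehat{G}$ on which $\pi$ is injective: the map $B\mapsto\pi(B)$ has as inverse any choice of lift through $\pi$, which exists by surjectivity. Under this correspondence $T(F,B)$ and $T(F,B')$ are equal up to a permutation of columns, so by Proposition~\ref{prop:exprsingval} they share all of $L_F$, $U_F$, $\rho_F$, $D_F$. Lifting immediately yields the one-sided inequalities $L(F;G)\geq L(F;H)$, $D(F;G)\geq D(F;H)$, $\rho(F;G)\leq\rho(F;H)$, and $U(F;G)\leq U(F;H)$.

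For the reverse inequalities I must handle those $B\subseteq\widehat{G}$ of size $n$ with $\pi|_B$ not injective. Any such $B$ makes $T(F,B)$ have repeated columns and hence be rank-deficient, so by Proposition~\ref{prop:exprsingval} it forces $L_F(B)=0$, $\rho_F(B)=\infty$, and $D_F(B)=0$, i.e., the worst possible values. Consequently, non-injective $B$ are irrelevant for the extrema of $L$, $\rho$, and $D$, and those three equalities are already established.

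The main obstacle I anticipate is the case of $U$, since a non-injective $B$ still has $U_F(B)<\infty$ and could a priori undercut the $\widehat{H}$-optimum. My plan is to write $U_F(B)=\bigl\|\sum_{j}m_{j}(\chi_{b'_{j}}|_{F})(\chi_{b'_{j}}|_{F})^{*}\bigr\|$, where $\{b'_{1},\dots,b'_{k}\}$ are the distinct elements of $\pi(B)$ with multiplicities $m_{j}\geq 1$ summing to $n$, and then to collapse excess multiplicities iteratively: at each step I decrease some $m_{j_{0}}\geq 2$ by $1$ and increase some $m_{j'}=0$ by $1$, choosing $b'_{j'}\in\widehat{H}$ so that the replacement rank-one operator lies nearly in the orthogonal complement of the top eigenspace of $TT^{*}$, thereby guaranteeing $\|T'(T')^{*}\|\leq\|TT^{*}\|$. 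After finitely many such steps the resulting $B^{*}\subseteq\widehat{G}$ has $\pi|_{B^{*}}$ injective and $U_F(B^{*})\leq U_F(B)$; pushing $B^{*}$ forward through $\pi$ gives an element of the $\widehat{H}$-optimization with the same or smaller $U_F$ value, so $U(F;H)\leq U_F(B)$, and taking the infimum over $B$ completes the proof.
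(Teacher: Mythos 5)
Your overall route is the same as the paper's: reduce to $a=0$ by translational invariance (Prop.~\ref{prop:invtrans}), identify $\widehat{G}\cong\widehat{H}\oplus\widehat{K}$, observe that every $b\in\widehat{G}$ acts on $F\subseteq H$ only through its projection $\pi(b)\in\widehat{H}$, so that $T(F,B)$ and $T(F,\pi(B))$ agree up to a column permutation when $\pi|_B$ is injective, while a non-injective $\pi|_B$ produces repeated columns and a singular Fourier matrix. This correctly settles $L$, $\rho$, and $D$, exactly as the paper's Claims 1--4 do (the paper phrases the injectivity step as $|p(B)|=|B|$ for any basis pair).

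The gap is in your treatment of $U$. You are right to flag it as the one quantity for which a non-injective $B$ is not automatically irrelevant, since $U_F(B)$ stays finite; the paper itself quietly sidesteps this by running the reverse inequality only over basis pairs. But your repair does not work as stated. You delete a duplicated column and insert a character $b'_{j'}$ whose restriction to $F$ is ``nearly in the orthogonal complement of the top eigenspace of $TT^{*}$,'' claiming this forces $\|T'(T')^{*}\|\leq\|TT^{*}\|$. Two problems: (i) the available replacement columns form a fixed finite set of norm-$\sqrt{n}$ vectors (restrictions of characters of $H$ to $F$), and nothing guarantees that any unused one lies near the orthogonal complement of a prescribed subspace; (ii) even exact orthogonality of $v_{j'}$ to the top eigenspace of $A=TT^{*}$ does not prevent the norm from increasing, because adding the rank-one term $v_{j'}v_{j'}^{*}$ (of trace $n$) to $A-v_{j_0}v_{j_0}^{*}$ can push a lower eigenvalue above $\lambda_{\max}(A)$ whenever the relevant spectral gap is smaller than $n$. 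So the monotonicity of $\|TT^{*}\|$ along your swaps is unproved. If one reads Definition~\ref{def:setquantities} as optimizing only over basis partners $B$ --- which is how the paper's own proof implicitly proceeds --- your first two paragraphs already complete the argument and the third is unnecessary; under the literal reading, the $U$ equality still lacks a proof in the non-injective case.
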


\begin{proof}
Let $Q$ denote any of the quantities $-L$, $U$, $\rho$, and $-D$.
By Proposition \ref{prop:invtrans}, we can assume that $a=0$.
Indeed, if $E=E'+a$ where $E' \subseteq H$, then
$Q(E;H+a)=Q(E'+a;H+a)=Q(E';H)$ and $Q(E;G)=Q(E'+a;G)=Q(E';G)$.
So suppose that $a=0$.
Our proof is in the notation of dual groups.

Recall that $\widehat{G}\cong\widehat{H}\times\widehat{K}$, canonically, with the following identification.
For $\hat{h}\in\widehat{H}$ and $\hat{k}\in\widehat{K}$,
let $(\hat{h},\hat{k})$ act as an element of $\widehat{G}$ by
$(\hat{h},\hat{k})(h,k) = \hat{h}(h)\hat{k}(k)$
for $h \in H$ and $k \in K$.
We define the projection $p((\hat{h},\hat{k}))=(\hat{h},1)$, where ``$1$" denotes the trivial homomorphism.

Let $B\subseteq\widehat{G}$ be such that $(E,B)$ is a basis pair.

\emph{Claim 1}. $|p(B)|=|B|$.

\emph{Proof of Claim 1}. Clearly $|p(B)|\leq |B|$. Hence, if $|p(B)|\neq |B|$, there exist $\hat{g}_{1}, \hat{g}_{2}\in B$, $\hat{g}_{1}\neq\hat{g}_{2}$, with $p(\hat{g}_{1})=p(\hat{g}_{2})$. Set $(\hat{h}_{1}, 1)=p(\hat{g}_{1})$ and $(\hat{h}_{2}, 1)=p(\hat{g}_{2})$. Given $x\in E$, as $x=x+0$ with $x\in H$ and $0\in K$, we have, for $(\hat{h}_{1},\hat{k}_{1})=\hat{g}_{1}$, that $\hat{g}_{1}(x)=(\hat{h}_{1}(x), \hat{k}_{1}(0))$. Hence, $\hat{g}_{1}(x)=(\hat{h}_{1}(x),1)=p(\hat{g}_{1})(x)$. Similarly, we have $p(\hat{g}_{2})(x)=(\hat{h}_{2}(x),1)=\hat{g}_{2}(x)$. As $p(\hat{g}_{1})=p(\hat{g}_{2})$, it follows that $\hat{g}_{1}(x)=\hat{g}_{2}(x)$ for all $x\in E$, so $\{E\ni x\mapsto\hat{g}(x)\}_{\hat{g}\in B}$ is linearly dependent, a contradiction with $(E, B)$ being a basis pair. Thus, $|p(B)|=|B|$.\ \qedsymbol

\emph{Claim 2}. $T(E,B)$ and $T(E,p(B))$ are equal up to a permutation of the columns.

\emph{Proof of Claim 2}.
From the proof of Claim 1, we have the equality of matrices $[\hat{g}(x)]_{x\in E, \hat{g}\in B}=[p(\hat{g})(x)]_{x\in E, p(\hat{g})\in p(B)}$, up to a choice of ordering.\ \qedsymbol

\emph{Claim 3}. $(E,p(B))$ is a basis pair, and $Q_E(B)=Q_E(p(B))$.

\emph{Proof of Claim 3}.
Follows directly from Claim 2.\ \qedsymbol

\emph{Claim 4}. We have $Q(E;H)=Q(E;G)$.

\emph{Proof of Claim 4}. First, as we can identify $\widehat{H}$ with $\widehat{H}\times\{1\}\subseteq\widehat{G}$, we see that
\[
Q(E;H)=\min_{B\subseteq\widehat{H}}Q_{E}(B)=
\min_{B\subseteq\widehat{H}\times\{1\}}Q_{E}(B)\geq \min_{B\subseteq\widehat{G}}Q_{E}(B)=Q(E;G).
\]
Moreover, given $B\subseteq\widehat{G}$ where $(E,B)$ is a basis pair, we have $p(B)\subseteq\widehat{H}\times\{1\}$, so $Q_{E}(B)=Q_{E}(p(B))\geq\min_{B\subseteq\widehat{H}\times\{1\}}Q_{E}(B)=Q(E;H)$, whence $Q(E;G)=\min_{B\subseteq\widehat{G}}Q_{E}(B)\geq Q(E;H)$.
We conclude that $Q(E;H)=Q(E;G)$.\ \qedsymbol
\end{proof}

Proposition \ref{prop:affinerestrict} has two important corollaries.
The first corollary is for the case $G=\Z_p^d$, $p$ a prime, where $G$ has the extra structure of a vector space.

\begin{cor}
\label{cor:affinerestrictZpd}
Let $V$ be a vector space over $\Z_p$, $W$ an affine subspace of $V$ viewed as a vector space where we take any element to be the zero element, and $E \subseteq W$.
Then $L(E;W)=L(E;V)$, $U(E;W)=U(E;V)$, $\rho(E;W)=\rho(E;V)$, and $D(E;W)=D(E;V)$.
\end{cor}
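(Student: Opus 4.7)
The plan is to deduce this corollary directly from Proposition \ref{prop:affinerestrict} by exploiting the fact that, over the field $\mathbb{Z}_p$, every linear subspace admits a complement. The only things to check are (a) that the hypothesis of Proposition \ref{prop:affinerestrict} on $H$ being a direct summand is automatic in this vector space setting, and (b) that viewing $W$ as a vector space with an arbitrary chosen zero coincides with viewing $W$ as a coset with the inherited group structure.

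First I would fix any $a \in W$ and set $H = W - a$. Because $W$ is an affine subspace of $V$, the set $H$ is a linear subspace of $V$, and we have $W = H + a$ as a coset. Since $V$ is a finite-dimensional vector space over the field $\mathbb{Z}_p$, the subspace $H$ admits a linear complement $K$ with $V = H \oplus K$ (pick any basis of $H$ and extend to a basis of $V$; let $K$ be the span of the new basis vectors). Thus the hypothesis of Proposition \ref{prop:affinerestrict} is satisfied.

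Next I would apply Proposition \ref{prop:affinerestrict} to $H$, $K$, and $E \subseteq H + a$ to obtain
\[
L(E; H+a) = L(E; V), \quad U(E; H+a) = U(E; V), \quad \rho(E; H+a) = \rho(E; V), \quad D(E; H+a) = D(E; V),
\]
where on the left $H+a$ is regarded as a group with the structure inherited from $H$ (equivalently, translated back to $H$ via subtraction by $a$). It remains to identify this with ``$W$ viewed as a vector space where we take any element to be the zero element.'' Since $\mathbb{Z}_p$-vector space structure is determined by the underlying abelian group structure (scalar multiplication by $k \in \mathbb{Z}_p$ is iterated addition), and the tightness quantities depend only on the group structure, the choice of which element of $W$ we declare to be zero is immaterial: changing the zero point amounts to a translation, under which $L$, $U$, $\rho$, $D$ are invariant by Proposition \ref{prop:invtrans}. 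In particular $W$ with any chosen zero is canonically isomorphic as a group to $H + a$ with its coset structure.

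There is essentially no obstacle here; the proof is a routine specialization. The mildest subtlety is the bookkeeping in step (b)—making sure that the two ways of turning an affine subspace into a group (``subtract a fixed element'' versus ``declare some element to be the origin of a vector space'') yield the same group, hence the same tightness quantities—but this is immediate once translational invariance is invoked.
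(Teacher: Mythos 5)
Your proof is correct and matches the paper's intended argument: the corollary is stated there as an immediate consequence of Proposition \ref{prop:affinerestrict}, with the only content being exactly what you verify, namely that every linear subspace of a $\Z_p$-vector space is a direct summand and that the choice of origin in $W$ is immaterial by translational invariance (Prop. \ref{prop:invtrans}). No gaps.
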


For $G=\Z_{n_1}\times\Z_{n_2}\times\dots\times\Z_{n_k}$ and $S \subseteq \set{1,2,\dots,k}$, define the \emph{coordinate subspace of $G$ with respect to $S$} to consist of elements $x$ of $G$ where for any $i \in S$, the $i$th coordinate of $x$ is zero. A \emph{coordinate affine subspace} of $G$ is a translate of some coordinate subspace of $G$.

The second corollary of Proposition \ref{prop:affinerestrict} is the following. It is actually equivalent to Proposition \ref{prop:affinerestrict} because we can pick generators for $H$ and $K$ in that proposition.

\begin{cor}
Let $G=\Z_{n_1}\times\Z_{n_2}\times\dots\times\Z_{n_k}$, $H$ be a coordinate affine subspace of $G$ viewed as a group where we take any element to be the zero element, and $E \subseteq H$.
Then $L(E;H)=L(E;G)$, $U(E;H)=U(E;G)$, $\rho(E;H)=\rho(E;G)$, and $D(E;H)=D(E;G)$.
\end{cor}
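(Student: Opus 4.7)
The plan is to reduce this corollary directly to Proposition \ref{prop:affinerestrict} by showing that any coordinate affine subspace is a coset of a direct summand of $G$. Specifically, write $H = H_0 + a$ where $H_0$ is the coordinate subspace with respect to some set $S \subseteq \set{1,2,\dots,k}$, and let $K_0$ be the coordinate subspace with respect to the complementary set $\set{1,2,\dots,k}\setminus S$. Then every element $x\in G$ decomposes uniquely as $x = h + k$ with $h\in H_0$ and $k\in K_0$: the $i$-th coordinate of $h$ agrees with $x_i$ for $i\notin S$ and is zero for $i\in S$, while $k$ is the complementary decomposition. This gives $G = H_0 \oplus K_0$, so $H_0$ is a direct summand of $G$.

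Next I would observe that $H$, as the coset $H_0 + a$, is exactly the type of subset to which Proposition \ref{prop:affinerestrict} applies. Moreover, the group structure we place on $H$ (by choosing any element to serve as the zero) coincides, up to relabeling, with the inherited group structure from $H_0$ via translation by $a$: if we choose the element $a$ as the zero of $H$, then $H$ is literally equal to $H_0 + a$ with the $H_0$ group structure, and if we choose any other element $a' \in H$ as zero, then by translational invariance (Prop.\ \ref{prop:invtrans}) the tightness quantities are unchanged.

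Applying Proposition \ref{prop:affinerestrict} with this setup yields $L(E;H_0 + a) = L(E;G)$, and likewise for $U$, $\rho$, and $D$. Combined with the previous paragraph this gives the desired equalities $L(E;H) = L(E;G)$, $U(E;H) = U(E;G)$, $\rho(E;H) = \rho(E;G)$, and $D(E;H) = D(E;G)$.

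The proof is essentially bookkeeping, so there is no real obstacle beyond verifying that the chosen coordinate decomposition genuinely produces a direct sum and that the identification of group structures on $H$ is legitimate. The only subtle point is ensuring that the group structure placed on $H$ in the statement (taking \emph{any} element to be zero) matches the one implicitly used by Proposition \ref{prop:affinerestrict} (which chooses the translate $H_0 + a$ with the structure inherited from $H_0$); this is handled by translational invariance.
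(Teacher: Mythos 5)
Your proof is correct and takes essentially the same approach as the paper, which simply remarks that this corollary is equivalent to Proposition~\ref{prop:affinerestrict} once one observes that a coordinate subspace and its complementary coordinate subspace give a direct sum decomposition $G = H_0 \oplus K_0$. Your extra care in reconciling the ``any element as zero'' convention with the coset structure via translational invariance (Prop.~\ref{prop:invtrans}) just fills in a detail the paper leaves implicit.
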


From the proof of Proposition \ref{prop:affinerestrict}, we also obtain the following.
If $H$ is a direct summand of $G$ and $E \subseteq H$, in order for $(E,B)$ to be a basis pair, no two elements of $B$ can act on $H$ in the same way.
We can generalize this statement somewhat to the case where $E$ is ``almost'' contained in $H$, as shown below.

\begin{prop}
\label{prop:almostinsubgp}
Let $H$ be a direct summand of $G$. Define an equivalence relation $\sim$ on $\widehat{G}$ as follows: $b_1 \sim b_2$ if the restrictions of $b_1$ and $b_2$ to $H$ are equal.
Let $E \subseteq G$ be such that $\ell$ elements of $E$ are not contained in $H$, and let $(E,B)$ is a basis pair.
For each equivalence class $C \in \widehat{G}/\sim$,
let $\operatorname{cnt}(C)$ be the number of elements of $B$ in $C$. Then
$$\sum_{C \in \widehat{G}/\sim, \operatorname{cnt}(C) \geq 1} (\operatorname{cnt}(C)-1) \leq \ell.$$
\end{prop}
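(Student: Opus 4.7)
\emph{Proof plan.} The strategy is to split $E$ into the part inside $H$ and the part outside, and exploit the fact that equivalent characters give identical columns on the ``inside'' rows of the Fourier matrix.

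First I would partition the rows of $T(E,B)$ as follows. Set $E_0 = E \cap H$ and $E_1 = E \setminus H$, so $|E_1| = \ell$. Indexing the rows of $T(E,B)$ by $E = E_0 \sqcup E_1$, we obtain a natural block decomposition into a ``top'' part indexed by $E_0$ and a ``bottom'' part of height $\ell$ indexed by $E_1$. The key observation is that if $b_1 \sim b_2$, then $b_1|_H = b_2|_H$, and since $E_0 \subseteq H$, the columns of $T(E,B)$ corresponding to $b_1$ and $b_2$ agree entirely in the top block.

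Next I would perform column operations. For each equivalence class $C \in \widehat{G}/{\sim}$ with $k := \operatorname{cnt}(C) \geq 1$, pick one column of $T(E,B)$ coming from an element of $B \cap C$ as a representative, and subtract that representative from each of the other $k-1$ columns coming from $B \cap C$. Each resulting modified column has zero entries throughout the top block, hence is supported only on the $\ell$ rows indexed by $E_1$.

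Finally I would invoke the hypothesis that $(E,B)$ is a basis pair, which by Proposition \ref{Basis pair} means $T(E,B)$ is invertible and in particular its columns are linearly independent. Column operations of the above form preserve rank, so after the operations all $n = |B|$ columns are still linearly independent; in particular, the collection of modified columns is linearly independent. Since every modified column lives in the $\ell$-dimensional subspace of $\mathbb{C}^{|E|}$ consisting of vectors supported on $E_1$, the total number of modified columns is at most $\ell$. But that total count is precisely $\sum_{C: \operatorname{cnt}(C) \geq 1} (\operatorname{cnt}(C) - 1)$, which yields the inequality. There is no real obstacle here; the only substantive point is the identification that the equivalence relation $\sim$ is exactly engineered so that equivalent characters produce columns agreeing on all rows from $H$, after which the rank count is essentially automatic.
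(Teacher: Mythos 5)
Your proposal is correct and follows essentially the same argument as the paper: both exploit that equivalent characters give columns agreeing on the rows indexed by $E \cap H$, subtract a representative column within each class to produce columns supported on the $\ell$ rows outside $H$, and conclude by a rank/dimension count. The only cosmetic difference is that you argue directly from linear independence of the columns, whereas the paper phrases it contrapositively (the sum exceeding $\ell$ forces $T(E,B)$ to be singular).
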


\begin{proof}
Suppose that the sum in question is greater than $\ell$, and we show that $T(E,B)$ must be singular. Let $E=\set{x_1,x_2,\dots,x_n}$, where $x_1,x_2,\dots,x_{n-\ell} \in H$. View $T(E,B)$ as a matrix whose row $i$ corresponds to $x_i$.

Consider a $C \in \widehat{G}/\sim$ with $\operatorname{cnt}(C) \geq 1$. Let $c = \operatorname{cnt}(C)$
and let $b_1,\dots,b_c \in B \cap C$.
Then the columns corresponding to $b_1,\dots,b_c$ all have the same values in the first $n-\ell$ rows.
If we subtract the column corresponding to $b_1$ from the columns corresponding to $b_2,\dots,b_c$, we obtain $c-1$ columns whose first $n-\ell$ entries are zero, and the resulting matrix is singular if and only if $T(E,B)$ is  singular.
Now repeat this process for all $C \in \widehat{G}/\sim$ with $\operatorname{cnt}(C)\geq 1$ to obtain a matrix whose $\sum_{C \in \widehat{G}/\sim,\operatorname{cnt}(C)\geq 1} (\operatorname{cnt}(C)-1) > \ell$ columns have zeros in their first $n-\ell$ entries. Since the nonzero entries of these columns can only be in the last $\ell$ entries, the columns must be linearly dependent. Therefore, $T(E,B)$ is singular.
\end{proof}

\begin{example}
\label{ex:condtoinfty}
We exhibit a family of subsets of $\mathbb{Z}_m^{2}$, one for each $m \geq 2$, whose Riesz ratios tend to infinity as $m\to\infty$.
Let $G=\mathbb{Z}_m^{2}$, let $H=\mathbb{Z}_m\times\{0\}$, and let $K=\{0\}\times\mathbb{Z}_m$, so $G=H\oplus K$.
Let $E=H\cup \{(0,1)\}$.
See Figure \ref{fig:rieszRatioToInfty}.

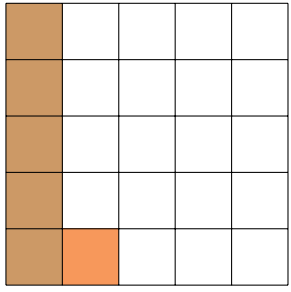
\begin{figure}
\begin{center}
\begin{tikzpicture}[scale=0.75]
\fill[brown!80!white] (0,0) rectangle (1,5);
\fill[Orange!80!white] (1,0) rectangle (2,1);
\draw (0,0) grid (5,5);
\end{tikzpicture}
\end{center}
\caption{A set in the family in Example \ref{ex:condtoinfty} whose Riesz ratios tend to infinity.}
\label{fig:rieszRatioToInfty}
\end{figure}

Let $B$ be such that $(E,B)$ is a basis pair.
We first show that, under $\sim$ from Proposition \ref{prop:almostinsubgp}, exactly one equivalence class contains two elements from $B$, while any other equivalence class contains one element from $B$.
We have $\widehat{G}/\sim=\cup_{t=0}^{m-1}[(t,0)]$.
Notice that $\ell=1$ element of $E$ is not contained in $H$. 
By Proposition \ref{prop:almostinsubgp}, we also have $\sum_{t=0\, :\, |B\cap [(t,0)]|\geq 1}^{m-1}\left(|B\cap [(t,0)]|-1\right)\leq 1$. If $|B\cap [(t,0)]|=0$ for some $t$, then since $|E|=m+1$, the pigeonhole principle implies that the above inequality is violated. So, for all $0\leq t\leq m-1$, we have $|B\cap [(t,0)]|\geq 1$. Hence, $|B\cap [(s,0)]|=2$ for a unique $s$ with $0\leq s\leq m-1$, and $|B\cap [(t,0)]|=1$ for any other $0 \leq t\leq m-1$.

Write $B=\{b_{1}, b_{2}, \dots, b_{m+1}\}$, with $b_{1}, b_{2}$ in $[(s,0)]$. Then, $b_{1}$ and $b_{2}$ agree on $H$. Writing $E$ as $E=\{x_{1}, \dots, x_m, x_{m+1}\}$ with $x_{i}=(i-1,0)$ for $1\leq i\leq m$ and $x_{m+1}=(0,1)$, we see that the vectors $(b_{1}(x_{i}))_{i=1}^{m+1}$ and $(b_{2}(x_{i}))_{i=1}^{m+1}$ differ only in their last entry. So, $\langle b_{1}, b_{2}\rangle=m+z$, for some $z\in\mathbb{C}$ with $|z|=1$. In particular, $|\langle b_{1}, b_{2}\rangle|\geq m-1$. We also observe that $\|(b_{1}(x_{i}))_{i=1}^{m+1}\|=\sqrt{m+1}=\|(b_{2}(x_{i}))_{i=1}^{m+1}\|$. Recall that
\[
L_{E}(B)=\inf_{(c_{i})\neq 0}\frac{\|\sum_{i=1}^{m+1}c_{i}b_{i}\|_{L^{2}(E)}^{2}}{\sum_{i=1}^{m+1}|c_{i}|^{2}}.
\]
Writing $\langle b_{1}, b_{2}\rangle = re^{i\theta}$ with $r\geq 0$ and $\theta\in\mathbb{R}$, we take $(c_{i})_{i=1}^{m+1}$ in $\mathbb{C}^{m+1}$ such that $c_{1}=-e^{-i\theta}$, $c_{2}=1$, and $c_{i}=0$ for $i>2$. Then we have
\[
L_{E}(B)\leq \frac{\|c_{1}b_{1}+c_{2}b_{2}\|_{L^{2}(E)}^{2}}{|c_{1}|^{2}+|c_{2}|^{2}}.
\]
Writing
\[
\frac{\|c_{1}b_{1}+c_{2}b_{2}\|_{L^{2}(E)}^{2}}{|c_{1}|^{2}+|c_{2}|^{2}}=\frac{\langle c_1 b_1 + c_2 b_2 , c_1 b_1 + c_2 b_2\rangle}{2},
\]
we obtain
\[
L_{E}(B)\leq \frac{\|(b_{1}(x_{i}))_{i=1}^{m+1}\|_{2}^{2}+\|(b_{2}(x_{i}))_{i=1}^{m+1}\|_{2}^{2}+2\text{Re}(\langle c_{1}b_{1}, c_{2}b_{2}\rangle)}{2},
\]
so
\[
L_{E}(B)\leq\frac{2(m+1)-2|\langle b_{1}, b_{2}\rangle|}{2}\leq (m+1)-(m-1)=2.
\]
Thus, $L(E)\leq 2$. As $m+1\leq U(E)$, it follows that $\rho(E) \geq (m+1)/2 \to\infty$ as $m\to\infty$.
\end{example}

\begin{remark}
In Example \ref{ex:condtoinfty}, we actually need not use
Proposition \ref{prop:almostinsubgp}.
Indeed, by the Pigeonhole principle, there are distinct $b_1,b_2 \in B$ that are in the same equivalence class.
Then the argument can proceed as in the last paragraph.
\end{remark}

The corollary below is Proposition \ref{prop:almostinsubgp} but with generators of $G$ already chosen.

\begin{cor}
Let $G=\Z_{n_1}\times\Z_{n_2}\times\dots\times\Z_{n_k}$, $S$ and $S'$ be a partition of $\set{1,2,\dots,k}$, and $H$ and $K$ be the coordinate subspaces of $G$ with respect to $S$ and $S'$, respectively. Let $E \subseteq G$ be such that $\ell$ elements of $E$ are not contained in $H$, and let $(E,B)$ be a basis pair. Then
$$\sum_{x \in H, \abs{B\cap (K+x)} \geq 1} (\abs{B \cap (K+x)} - 1) \leq \ell.$$
\end{cor}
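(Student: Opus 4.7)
The plan is to recognize that this corollary is nothing more than a translation of Proposition \ref{prop:almostinsubgp} into explicit coordinates, under the standard identification of $\widehat{G}$ with $G$ given by \eqref{eq:ghatacts}. Once the equivalence classes of $\sim$ are described in those coordinates, the sum rewrites as the one claimed.

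First I would verify the hypotheses: because $S$ and $S'$ partition $\set{1,2,\dots,k}$, we have $G = H \oplus K$, so $H$ is a direct summand, and the hypothesis of Proposition \ref{prop:almostinsubgp} applies. Next, under the identification \eqref{eq:ghatacts}, an element $b \in \widehat{G}$ with coordinates $(b_1,\dots,b_k)$ acts on $h \in H$ (which has zero entries in positions $i \in S$) by $b(h) = \chi\paren{\sum_{i \in S'} h_i b_i/n_i}$, so the restriction of $b$ to $H$ depends only on the coordinates $b_i$ for $i \in S'$. Consequently, $b_1 \sim b_2$ if and only if $b_1$ and $b_2$ agree in every coordinate $i \in S'$, which is exactly to say that $b_1 - b_2 \in K$. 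Thus the equivalence classes of $\sim$ on $\widehat{G}$, in coordinates, are precisely the cosets of $K$.

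Next I would choose a canonical set of representatives: since $G = H \oplus K$, every coset of $K$ contains a unique element of $H$, so the cosets can be indexed as $\set{K+x : x \in H}$. Under this indexing, $\operatorname{cnt}(K+x) = \abs{B \cap (K+x)}$. Substituting into the conclusion of Proposition \ref{prop:almostinsubgp} yields
\[
\sum_{x \in H,\ \abs{B \cap (K+x)} \geq 1} \bigl(\abs{B \cap (K+x)} - 1\bigr) \leq \ell,
\]
which is the stated inequality.

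The only conceptual point that needs care is the coordinate identification: one must confirm that restricting a character of $G$ to $H$ genuinely records only the coordinates in $S'$, and symmetrically that the coset $K+x$ (in the group $G$, with $K$ viewed as a subgroup) corresponds to the equivalence class of characters agreeing on $H$. Neither step involves any calculation beyond the definition of the coordinate subspaces, so no real obstacle arises; the corollary is essentially a reformulation rather than a new result.
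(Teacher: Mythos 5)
Your proposal is correct and matches the paper's intent exactly: the paper offers no separate proof, stating only that the corollary is Proposition \ref{prop:almostinsubgp} ``with generators of $G$ already chosen,'' and your identification of the equivalence classes of $\sim$ with the cosets $K+x$, $x\in H$, is precisely the translation that justifies this. The coordinate bookkeeping (restriction to $H$ records the coordinates in $S'$, so $b_1\sim b_2$ iff $b_1-b_2\in K$) is the whole content, and you have it right.
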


We end with results on scale invariance. Consider a $k \in \Z$ such that multiplication by $k$ is an invertible $\Z$-linear transformation.
For example, if $G=\Z_p^d$, $p$ a prime, then any $1 \leq k \leq p-1$ will do.
By \cite[Cor. 4.3]{REU}, we know that in $\Z_p^d$, if $(E,B)$ is a spectral pair, then $(E,kB)$ is also a spectral pair. This statement generalizes to any finite abelian group $G$ as shown below.

\begin{prop}
\label{prop:scalespec}
Let $k \in \Z$ be such that multiplication by $k$ is an invertible $\Z$-linear transformation on $G$.
If $(E,B)$ is a spectral pair, then $(E,kB)$ is also a spectral pair.
\end{prop}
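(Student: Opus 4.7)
The plan is to deduce spectrality of $(E, kB)$ from that of $(E, B)$ via the Galois action on cyclotomic fields. The key observation is that every character value $b(x)$, for $b \in \widehat{G}$ and $x \in G$, is an $n$-th root of unity, where $n$ denotes the exponent of $G$. The hypothesis that multiplication by $k$ is invertible on $G$ is equivalent to $\gcd(k, n) = 1$, and this is exactly the condition that $\zeta_n \mapsto \zeta_n^k$ extends to a Galois automorphism $\sigma_k$ of $\Q(\zeta_n)/\Q$. In particular, $\sigma_k(z) = z^k$ for every $n$-th root of unity $z$, and $\sigma_k(0)=0$.

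Concretely, I would rewrite the spectrality of $(E, B)$ as the family of equations $\sum_{x \in E} b_1(x) \overline{b_2(x)} = 0$, indexed by the distinct pairs $b_1, b_2 \in B$; each such sum lies in $\Z[\zeta_n]$. Applying $\sigma_k$ to both sides yields $\sum_{x \in E} \bigl(b_1(x) \overline{b_2(x)}\bigr)^k = 0$, and the character homomorphism property rewrites the summand as $(kb_1)(x)\,\overline{(kb_2)(x)}$, where $kb_i$ denotes the $k$-fold power of $b_i$ in $\widehat{G}$. Since multiplication by $k$ is a bijection on $\widehat{G}$, distinct $b_1, b_2 \in B$ produce distinct $kb_1, kb_2 \in kB$, so these are exactly the orthogonality relations needed to conclude that $(E, kB)$ is a spectral pair.

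There is no substantive obstacle once the cyclotomic viewpoint is in place; the whole argument is only a few lines. The one point I would take care to spell out is the equivalence between the conditions \emph{multiplication by $k$ is invertible on $G$} and \emph{$\gcd(k, n) = 1$ for $n$ the exponent of $G$}, which follows at once from the structure theorem for finite abelian groups and is what guarantees that $\sigma_k$ is well defined on $\Q(\zeta_n)$.
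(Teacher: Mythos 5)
Your proof is correct and takes essentially the same route as the paper: the paper records the column inner products as integer Laurent polynomials $P_{ij}(x)$ vanishing at a primitive $N$-th root of unity $\omega$ (with $N=n_1\cdots n_\ell$) and concludes $P_{ij}(\omega^k)=0$ for $\gcd(k,N)=1$, which is exactly the Galois-conjugation fact you invoke. Your explicit use of the automorphism $\sigma_k$ of $\Q(\zeta_n)$ applied directly to the orthogonality sums is just a cleaner phrasing of that same step.
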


\begin{proof}
View $G \cong \Z_{n_1}\times\Z_{n_2}\times\dots\times\Z_{n_\ell}$ and identify $\widehat{G}$ with $G$ as in Equation \eqref{eq:ghatacts}. Then $k$ must be relatively prime to $n_1,n_2,\dots,n_\ell$.
Let $\omega = \chi(1/(n_1n_2\dots n_\ell))$ be a primitive $n_1n_2\dots n_\ell$-th root of unity. Then the entries of the matrix $T(E,B)$ are powers of $\omega$. Specifically, let $E=\set{x_1,x_2,\dots,x_n}$ and $B=\set{b_1,b_2,\dots,b_n}$. If $M(x)$ is the $n \times n$ matrix whose $(i,j)$ entry is
\begin{equation}
\label{eq:entryMx}
x^{\paren{\frac{(x_i)_1 (b_j)_1}{n_1}+\frac{(x_i)_2 (b_j)_2}{n_2}+\dots+\frac{(x_i)_\ell (b_j)_\ell}{n_\ell}}n_1n_2\dots n_\ell},
\end{equation}
then $T(E,B)=M(\omega)$.

Notice that $T(E,kB)=M(\omega^k)$.
For $x \in \mb{S}^1$, let $M_i(x)$ be column $i$ of $M(x)$, and let $P_{ij}(x) = M_i(x)^T M_j(x^{-1}) \in \Z[x,x^{-1}]$ be the inner product between columns $i$ and $j$ of $M(x)$.
The fact that $(E,B)$ is a spectral pair means that $P_{ij}(\omega) = 0$ for all $i \neq j$.
Because $k$ is relatively prime to $n_1n_2\dots n_\ell$, this implies that $P_{ij}(\omega^k)=0$ for all $i \neq j$.
Hence, the columns of $T(E,kB)$ are orthogonal, and so $(E,kB)$ is a spectral pair.
\end{proof}

By Proposition \ref{prop:scalespec}, if $(E,B)$ is a spectral pair, then our defined quantities on $(E,B)$ are conserved when $B$ is multiplied by $k$.
However, this result does not generalize beyond spectral pairs, as the following example shows.

\begin{example}
Let $m>2$.
Consider $E=B=\set{0,1}$ in $\Z_m$, and let $k$ be relatively prime to $m$. Then $(E,B)$ is a non-spectral pair. We have
$$T(E,B) = \begin{pmatrix} 1 & 1 \\ 1 & \chi(1/m) \end{pmatrix}, \quad
T(E,kB) = \begin{pmatrix} 1 & 1 \\ 1 & \chi(k/m) \end{pmatrix},$$
so none of the quantities $L_E(B)$, $U_E(B)$, $\rho_E(B)$, and $D_E(B)$ is conserved.
\end{example}

\section{Bounds on tightness quantities}
\label{sec:bounds}

In this section, we obtain bounds on how far a basis pair $(E,B)$ and a set $E$ can be from being spectral. The quantities we will use to measure this are $\rho_E(B)$ and $\rho(E)$, where these quantities being close to one and $\infty$ translate to $(E,B)$ or $E$ being close to and far from being spectral, respectively.

Our main findings are the following.

\begin{itemize}
\item In the special case of $\abs{E}=2$ and $E \subseteq \Z_p^d$, $p$ a prime, as $p \to \infty$, $\rho(E) \to 1$ independent of $d$.
However, this result cannot be generalized beyond $G=\Z_p^d$ or beyond $\abs{E}=2$. (See Sec. \ref{subsec:boundsNearlySpec}.)
\item For $G\cong \Z_{n_1}\times\Z_{n_2}\times\dots\times\Z_{n_\ell}$, let $M=\lcm(n_1,n_2,\dots,n_\ell)$.
Then for $E\subseteq G$, $\rho_E(B)$ is bounded if $\abs{E}=n$ is fixed and $M$ is bounded.
In particular, for $E\subseteq \Z_m^d$, $\rho_E(B)$ is bounded for $\abs{E}=n$ and $m$ fixed, independent of $d$.
Moreover, the condition that $M$ is bounded cannot be removed. (See Sec. \ref{subsec:boundsUpperEB}.)
\item Surprisingly, $\rho(E)$ is bounded for fixed $\abs{E}=n$, completely independent of $G$.
This shows that in those cases where results of Section \ref{subsec:boundsNearlySpec} do not apply and we do not have $\rho(E) \to 1$, $\rho(E)$ is still always bounded.
(See Sec. \ref{subsec:boundsUpperE}.)
\end{itemize}

In summary, if one wants to construct a sequence of $(E,B)$ such that $\rho_E(B) \to \infty$, one cannot fix $\abs{E}$ and $M$.
If one wants to construct a sequence of $E$ such that $\rho(E) \to \infty$, one cannot fix $\abs{E}$.

\subsection{Nearly spectral sets of fixed size}
\label{subsec:boundsNearlySpec}

In this subsection, we investigate the following question.
If we fix the size of the set $E$ and let the ambient space $\Z_m^d$ ``grow to infinity,'' does $E$ always become closer and closer to being spectral?
Recall from Proposition \ref{prop:closeortho} that the statement that a set $E$ is close to being spectral can be formalized as $\rho(E)$ being close to one.

Proposition \ref{prop:size2closespec} shows this to be the case for any sequence $E_p \subseteq \Z_p^{d_p}$, $p$ being primes, with $\abs{E_p}=2$.
We might interpret this fact as follows.
As $p \to \infty$, there is more space, so it is becoming easier to find a partner $B_p$ such that $(E_p,B_p)$ is close to being spectral.

Nevertheless, we also show in Examples \ref{ex:Zmsize2notclosespec} and \ref{ex:Zpsize3notclosespec} that this result does not hold when the size of $E_p$ changes to 3 or when we consider a sequence $E_m \subseteq \Z_m^{d_m}$, $m \geq 1$ is not necessarily prime.
Thus, the phenomenon of getting close to being spectral seems to be special
to the case of sets of size two in $\Z_p^d$, $p$ a prime.

Later, in Section \ref{subsec:boundsUpperE}, we show that the above intuition of having more space still holds in some weaker sense.
Namely, for $E$ of fixed size, $\rho(E)$ is \emph{bounded} independent of the ambient space $G$, where $G$ is any finite abelian group.
So $E$ may not get closer and closer to being spectral as $G$ grows, but it also cannot get arbitrarily far away.

\begin{prop}
\label{prop:size2closespec}
For any sequence of sets $E_p \subseteq \Z_p^{d_p}$ indexed by primes $p$ with $\abs{E_p} = 2$, we have
$\rho(E_p)\to 1$ as $p \to \infty$.
\end{prop}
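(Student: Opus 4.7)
The plan is to exploit the three invariance properties of Section \ref{subsec:invarianceProps} to reduce the assertion to a one-parameter family of two-element sets in the groups $\Z_p$ themselves, and then to make the ratio $\rho$ explicit by a concrete choice of dual set $B$.

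First, I would apply translational invariance (Proposition \ref{prop:invtrans}) to assume $0 \in E_p$, so $E_p = \{0, v_p\}$ for some nonzero $v_p \in \Z_p^{d_p}$. Since $p$ is prime, $\Z_p$ is a field and $\Z_p^{d_p}$ is a genuine vector space; any nonzero vector extends to a basis, giving an invertible linear map that sends $v_p$ to $e_1 := (1,0,\dots,0)$. By Proposition \ref{prop:invlinear}, this does not change $\rho$, so I may assume $E_p = \{0, e_1\}$. Since this set lies in the one-dimensional coordinate subspace $\Z_p \times \{0\}^{d_p-1}$, Corollary \ref{cor:affinerestrictZpd} collapses the ambient group down to $\Z_p$. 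The problem is thus reduced to showing $\rho(\{0,1\};\Z_p) \to 1$ as the odd prime $p \to \infty$.

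For this reduced problem, I would take $B := \{0, (p-1)/2\} \subseteq \Z_p$ and compute $\rho_E(B)$ directly for $E = \{0,1\}$. Writing $\omega := \chi\!\left(\tfrac{p-1}{2p}\right) = -e^{-i\pi/p}$, the Fourier matrix is
\[
T(E,B) = \begin{pmatrix} 1 & 1 \\ 1 & \omega \end{pmatrix},
\qquad
T(E,B)^{*}T(E,B) = \begin{pmatrix} 2 & 1+\omega \\ 1+\bar\omega & 2 \end{pmatrix},
\]
so the eigenvalues of $T^*T$ are $2 \pm |1+\omega| = 2 \pm 2\sin(\pi/(2p))$. By Proposition \ref{prop:exprsingval} and the universal bound $\rho \geq 1$ from Proposition \ref{prop:simplebds},
\[
1 \;\leq\; \rho(\{0,1\};\Z_p) \;\leq\; \rho_E(B) \;=\; \frac{1 + \sin(\pi/(2p))}{1 - \sin(\pi/(2p))} \;\longrightarrow\; 1,
\]
which yields $\rho(E_p) \to 1$ and recovers the asymptotic rate $\rho(\{0,1\}) - 1 \sim \pi/p$ mentioned in the introduction.

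I do not anticipate a substantive obstacle. The reductions are essentially bookkeeping, and the final step is an elementary $2 \times 2$ computation. The one delicate point is that the middle reduction uses in an essential way that $\Z_p$ is a field: over $\Z_m$ with $m$ composite, a nonzero vector need not sit inside any basis and cannot always be mapped to $e_1$ by an invertible $\Z$-linear map. This is precisely the feature that the subsequent Example \ref{ex:Zmsize2notclosespec} is designed to expose as genuinely blocking the analogue over composite moduli, while Example \ref{ex:Zpsize3notclosespec} will similarly show that increasing $|E|$ from $2$ to $3$ destroys the near-spectrality.
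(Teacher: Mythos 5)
Your proposal is correct and follows essentially the same route as the paper: the same reduction via the invariance properties and Corollary \ref{cor:affinerestrictZpd} to $\{0,1\}\subseteq\Z_p$, and the same choice $B=\{0,(p-1)/2\}$. The only difference is cosmetic---you compute the eigenvalues of $T^*T$ explicitly to get $\rho_E(B)=\frac{1+\sin(\pi/(2p))}{1-\sin(\pi/(2p))}$, which also recovers the rate $\rho-1\sim\pi/p$, whereas the paper simply invokes continuity of the condition number.
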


\begin{proof}
By Corollary \ref{cor:affinerestrictZpd}, we can assume that $E_p \subseteq \Z_p$.
By a further affine transformation (Cor. \ref{cor:invaffine}), we can assume that
$E_p=\set{0,1}$.

For each $p\geq 3$, choose $B_p=\set{0,(p-1)/2}$. Then
$$\rho_{E_p}(B_p) = \cond^2 \begin{pmatrix} 1 & 1 \\
1 & \chi((p-1)/{2p}) \end{pmatrix}.$$
By the continuity of the condition number,
$$\rho_{E_p}(B_p) \to \cond^2 \begin{pmatrix} 1 & 1 \\ 1 & -1 \end{pmatrix} = 1,$$
as $p \to \infty$.
Therefore, $\rho(E_p) \to 1$ as $p \to \infty$.
\end{proof}

As mentioned at the start of the subsection, Proposition \ref{prop:size2closespec} cannot be generalized beyond $G=\Z_p^d$, $p$ a prime, or beyond sets of size two, as the following two examples demonstrate.

\begin{example}
\label{ex:Zmsize2notclosespec}
We present a sequence $E_m \subseteq \Z_m^{d_m}$, $m \geq 1$, with $\abs{E_m}=2$, such that $\rho(E_m)$ does not tend to 1 as $m \to \infty$.

Take $E_m = \set{0,m/3} \subseteq \Z_m$ for $m$ that are multiples of 3, and let $B_m$ be such that $\rho_{E_m}(B_m)$ is minimized.
By translational invariance (Prop. \ref{prop:invtrans}), we can assume that $0 \in B_m$. Then
$$\rho_{E_m}(B_m) = \cond^2 \begin{pmatrix} 1 & 1 \\ 1 & \chi(k_m/3) \end{pmatrix}$$
for some $k_m \in \set{0,1,2}$. None of the $k_m$ makes the right-hand side one, so $\rho_{E_m}(B_m)$ cannot converge to one.
\end{example}

\begin{example}
\label{ex:Zpsize3notclosespec}
We present a sequence $E_p \subseteq \Z_p^{d_p}$, $p$ being primes, with $\abs{E_p}=3$, such that $\rho(E_p)$ does not tend to 1 as $p \to \infty$.

Take $E_p = \set{0,1,3} \subseteq \Z_p$, and let $B_p$ be such that $\rho_{E_p}(B_p)$ is minimized. Again we can assume that $0 \in B_p$.
Suppose to the contrary that $\rho_{E_p}(B_p) \to 1$ as $p \to \infty$. Notice that
$$T(E_p,B_p)= \begin{pmatrix} 1 & 1 & 1 \\ 1 & x_p & y_p \\ 1 & x_p^3 & y_p^3 \end{pmatrix},$$
for some $x_p, y_p \in \mb{S}^1$.
For large $p$, since $\rho_{E_p}(B_p)$ is close to 1, Proposition \ref{prop:closeortho} implies that the columns of $T(E,B)$ are close to being orthogonal. Hence $x_p+x_p^3$ is close to $-1$. It is easily verified that the sum of two elements $a,b \in \mb{S}^1$ is close to $-1$ if and only if $(a,b)$ is close to $(\chi(1/3),\chi(2/3))$ or $(\chi(2/3),\chi(1/3))$.
Because $(x_p,x_p^3)$ cannot be close these pairs, we obtain a contradiction.
So $\rho_{E_p}(B_p)$ cannot tend to 1 as $p \to \infty$.
\end{example}

\subsection{Relations between tightness quantities}
\label{subsec:boundsRelations}

In this section, we introduce quantitative bounds that relate the tightness quantities to one another. Our main tool is Proposition \ref{prop:exprsingval}.
Recall from Proposition \ref{prop:closeortho} that, for a fixed $\abs{E}=n$, any of the following statements implies all others: $L_E(B)$ is close to $n$, $U_E(B)$ is close to $n$, $\rho_E(B)$ is close to 1, and $D_E(B)$ is close to $n^{n/2}$.
Thus, results in this section can be thought of as a more precise version of that proposition.

Apart from obtaining bounds when $(E,B)$ is close to being spectral, we will also use these results to derive estimates on how far $(E,B)$ and $E$ can be from being spectral in Sections \ref{subsec:boundsUpperEB} and \ref{subsec:boundsUpperE}.
Specifically, we first bound $D_E(B)$ away from zero, this task being the easiest since $D_E(B)$ is the absolute value of an integer linear combination of roots of unity.
Then, estimates on other tightness quantities will follow.

Denote $L_E(B)$ by $L$, etc. We first describe how $L$, $U$, and $C$ can be related to one another. Observe that
$$(n-1)L+U = (n-1)\sigma_1^2+\sigma_n^2
\leq \sigma_1^2+\sigma_2^2+\dots+\sigma_n^2=n^2.$$
Similarly, $L+(n-1)U \geq n^2$.
From these two inequalities, we can deduce a lower and upper bound of $L$ in terms of $U$, and vice versa.
Moreover, these inequalities become equalities when $\sigma_1=\sigma_2=\dots=\sigma_{n-1}$ and
$\sigma_2=\sigma_3=\dots=\sigma_n$, respectively, so these bounds are the best we can get from Proposition \ref{prop:exprsingval}.

By writing $C^2=U/L$, the above gives lower and upper bounds of any of $L$, $U$, and $C$ in terms of any other.

Bounds involving $D$ are more complicated.
We only derive upper bounds of $D$ in terms of $L$, $U$, and $C$ in Proposition \ref{prop:upperbdD}, and not the corresponding lower bounds, as only the upper bounds will be used in later sections.
Then we invert these results to get a lower bound of $L$, an upper bound of $U$, and an upper bound of $C$ in terms of $D$ in Proposition \ref{prop:invertupperbdD}.
The bounds in Proposition \ref{prop:upperbdD} are tight in the sense that an equality condition exists in terms of the singular values, while we lose some tightness in inverting them in Proposition \ref{prop:invertupperbdD}.
Bounds in Proposition \ref{prop:invertupperbdD} will be very useful in Sections \ref{subsec:boundsUpperEB} and \ref{subsec:boundsUpperE}.

\begin{prop}
\label{prop:upperbdD}
Let $(E,B)$ be a basis pair with $\abs{E}=n>1$.
Let $0 < \sigma_1 \leq \sigma_2 \leq \dots \leq \sigma_n$ be the singular values of $T(E,B)$.
Let $L=L_E(B)$, $U=U_E(B)$, $\rho=\rho_E(B)$, and $D=D_E(B)$. Then
$$D \leq \sqrt{L}\paren{\frac{n^2-L}{n-1}}^{\frac{n-1}{2}},$$
where equality holds if and only if $\sigma_2=\sigma_3=\dots=\sigma_n$;
$$D \leq \sqrt{U}\paren{\frac{n^2-U}{n-1}}^{\frac{n-1}{2}},$$
where  equality holds if and only if $\sigma_1=\sigma_2=\dots=\sigma_{n-1}$;
and
$$D \leq \frac{2\sqrt\rho}{\rho+1} n^{n/2},$$
where equality holds if and only if $\sigma_2=\sigma_3=\dots=\sigma_{n-1}=\sqrt{n}$.
\end{prop}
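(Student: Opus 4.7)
The plan is to work directly from the expressions for $L, U, \rho, D$ in terms of the singular values $0<\sigma_1\le \dots \le \sigma_n$ of $T(E,B)$ given by Proposition \ref{prop:exprsingval}, together with the trace identity $\sigma_1^2+\sigma_2^2+\dots+\sigma_n^2=n^2$ from the same proposition. All three inequalities are AM-GM arguments, but with different singular values isolated before the estimate is applied.

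For the first bound, I would write $D = \sqrt{L}\cdot (\sigma_2 \sigma_3 \cdots \sigma_n)$ using $\sigma_1=\sqrt{L}$. Applying AM-GM to the squares $\sigma_2^2,\dots,\sigma_n^2$ yields
\[
\sigma_2\sigma_3\cdots\sigma_n \;\le\; \left(\frac{\sigma_2^2+\cdots+\sigma_n^2}{n-1}\right)^{(n-1)/2},
\]
and the trace identity rewrites the numerator as $n^2-L$. AM-GM equality holds iff $\sigma_2^2=\dots=\sigma_n^2$, giving the stated equality condition. The second bound is obtained symmetrically by writing $D=\sqrt{U}\cdot(\sigma_1\cdots\sigma_{n-1})$ and applying AM-GM to $\sigma_1^2,\dots,\sigma_{n-1}^2$.

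For the third bound, I would isolate both extremes at once, writing
\[
D \;=\; \sigma_1\sigma_n \cdot (\sigma_2\cdots\sigma_{n-1}).
\]
Using $\sigma_1=\sqrt{L}$ and $\rho = \sigma_n^2/\sigma_1^2$ gives $\sigma_1\sigma_n = L\sqrt{\rho}$ and $\sigma_1^2+\sigma_n^2 = L(1+\rho)$, so $\sigma_2^2+\cdots+\sigma_{n-1}^2 = n^2 - L(1+\rho)$. AM-GM on the $n-2$ middle squares then yields
\[
D \;\le\; L\sqrt{\rho}\left(\frac{n^2-L(1+\rho)}{n-2}\right)^{(n-2)/2}.
\]
Here $L$ is still free, so the final step is to maximize the right-hand side in $L$ with $\rho$ fixed. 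A one-variable calculus check shows the optimum occurs at $L = 2n/(1+\rho)$, at which $n^2 - L(1+\rho) = n(n-2)$, and the bound collapses to $\frac{2\sqrt{\rho}}{\rho+1}\,n^{n/2}$. For $n=2$ there are no middle singular values, the product $\sigma_2\cdots\sigma_{n-1}$ is empty, and $L$ is already forced to $4/(1+\rho)$ by the trace identity, so the same expression is attained without optimization.

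The main care needed is the equality condition in the third bound. Equality in the AM-GM step requires $\sigma_2=\dots=\sigma_{n-1}$; equality in the subsequent maximization in $L$ requires $L = 2n/(1+\rho)$, which together with the trace identity forces $\sigma_2^2=\dots=\sigma_{n-1}^2 = n$. This reproduces the condition $\sigma_2 = \dots = \sigma_{n-1} = \sqrt{n}$ stated in the proposition; conversely, inserting this condition into the definitions recovers equality throughout. No single step is genuinely hard, but the main subtlety is to verify that the two equality conditions (AM-GM and the optimization over $L$) are simultaneously attainable by some actual admissible tuple of singular values, which is precisely what the trace identity guarantees in this case.
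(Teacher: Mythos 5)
Your proof is correct and follows essentially the same route as the paper: write $D$ as a product isolating the extremal singular values, apply the quadratic mean--geometric mean inequality to the remaining ones via the trace identity $\sigma_1^2+\dots+\sigma_n^2=n^2$, and for the third bound optimize the resulting expression over the one remaining free parameter (with the same equality analysis forcing the middle singular values to equal $\sqrt{n}$). The only cosmetic difference is that you carry out that last optimization by one-variable calculus, whereas the paper uses a weighted AM--GM on the $n$ terms $\tfrac{(\rho+1)x^2}{2},\tfrac{(\rho+1)x^2}{2}$ and $n-2$ copies of $\tfrac{n^2-(\rho+1)x^2}{n-2}$; both locate the same maximizer $L=2n/(1+\rho)$ and yield the same bound.
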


\begin{proof}
By the quadratic mean-geometric mean inequality,
$$D = \sigma_1\sigma_2\dots\sigma_n
\leq \sigma_1 \paren{\frac{\sigma_2^2+\sigma_3^2+\dots+\sigma_n^2}{n-1}}^{\frac{n-1}{2}} = \sqrt{L}\paren{\frac{n^2-L}{n-1}}^{\frac{n-1}{2}},$$
with the equality case as described. Similarly,
$$D = \sigma_1\sigma_2\dots\sigma_n
\leq \paren{\frac{\sigma_1^2+\sigma_2^2+\dots+\sigma_{n-1}^2}{n-1}}^{\frac{n-1}{2}}\sigma_n = \sqrt{U}\paren{\frac{n^2-U}{n-1}}^{\frac{n-1}{2}},$$
with the desired equality case.

The last inequality is more complicated. We claim that if $\rho=\sigma_n^2/\sigma_1^2$ is fixed and the singular values satisfy $\sigma_1^2+\sigma_2^2+\dots+\sigma_n^2 = n^2$, then the maximum value of $D$ is given by the desired expression.
The case of $n=2$ is easily worked out, so assume that $n \geq 3$.
First notice that if we fix $\sigma_1$ and $\sigma_n$, then the maximum of $D$ is achieved when $\sigma_2=\sigma_3=\dots=\sigma_{n-1}$ by the quadratic mean-geometric mean inequality. So suppose that this is the case.
Let $\sigma_1=x$, $\sigma_n=\sqrt{\rho}x$, and the rest of the singular values be
$$\sigma_i = \sqrt{\frac{n^2-(\rho+1)x^2}{n-2}}.$$
Then
$$D^2=C^2x^4 \paren{\frac{n^2-(\rho+1)x^2}{n-2}}^{n-2}.$$
By the arithmetic mean-geometric mean inequality,
$$D^2=\frac{4\rho}{(\rho+1)^2} \bracket{\frac{(\rho+1)x^2}{2}\cdot \frac{(\rho+1)x^2}{2} \cdot \paren{\frac{n^2-(\rho+1)x^2}{n-2}}^{n-2}}
\leq \frac{4\rho}{(\rho+1)^2} n^n,$$
which implies the desired result.
One can check that the equality case is
$\sigma_2=\sigma_3=\dots=\sigma_{n-1}=\sqrt{n}$.
\end{proof}

We now seek to invert the inequalities in Proposition \ref{prop:upperbdD} to bound $L$, $U$, and $\rho$ in terms of $D$.
The bounds obtained are not sharp, but they are sharp within constant factors.

\begin{prop}
\label{prop:invertupperbdD}
Let $(E,B)$ be a basis pair with $\abs{E}=n>1$.
Let $0 < \sigma_1 \leq \sigma_2 \leq \dots \leq \sigma_n$ be the singular values of $T(E,B)$.
Let $L=L_E(B)$, $U=U_E(B)$, $\rho=\rho_E(B)$, and $D=D_E(B)$. Then
$$L > \paren{\frac{n-1}{n^2}}^{n-1}D^2,$$
where for $\sigma_2=\sigma_3=\dots=\sigma_n$, the two sides are within a factor of $\paren{1+\frac{1}{n-1}}^{n-1}< e$ of each other;
$$n^2-U > (n-1)\paren{\frac{D}{n}}^{\frac{2}{n-1}},$$
where for $\sigma_1=\sigma_2=\dots=\sigma_{n-1}$, the two sides are within a factor of $n^\frac{1}{n-1} \leq 2$ of each other; and
$$\rho < \frac{4n^n}{D^2},$$
where for $\sigma_2=\sigma_3=\dots=\sigma_{n-1}=\sqrt{n}$, the two sides are within a factor of $1+1/\rho \leq 2$ of each other.
\end{prop}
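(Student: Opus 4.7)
My plan is to derive each of the three inequalities by algebraically inverting the corresponding bound from Proposition \ref{prop:upperbdD}, using the a priori constraints $0 < L \leq n \leq U$ and $\rho \geq 1$ from Proposition \ref{prop:simplebds}, together with the observation that for a basis pair all singular values are positive, so that $\sigma_1^2 + \cdots + \sigma_n^2 = n^2$ forces $U = \sigma_n^2 < n^2$. The key structural feature is that the functions $L \mapsto L(n^2-L)^{n-1}$ and $U \mapsto U(n^2-U)^{n-1}$ appearing on the right-hand sides of the first two bounds are not monotone in the relevant range, so we cannot invert them exactly; instead, we replace the unknown factor $n^2 - L$ (respectively $U$) by its crude a priori bound $n^2$ to obtain a clean but slightly lossy inversion. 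The quantitative tightness statements in the equality cases then follow by plugging the Proposition \ref{prop:upperbdD} equality formulas back in and simplifying.

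For the first inequality, start from $D^2 \leq L\bigl(\tfrac{n^2-L}{n-1}\bigr)^{n-1}$ and use $L > 0$ to get $D^2 < L\bigl(\tfrac{n^2}{n-1}\bigr)^{n-1}$, which rearranges to $L > \bigl(\tfrac{n-1}{n^2}\bigr)^{n-1}D^2$. In the equality case $\sigma_2=\cdots=\sigma_n$, the first bound of Proposition \ref{prop:upperbdD} is tight, so the ratio of the two sides equals $\bigl(\tfrac{n^2}{n^2-L}\bigr)^{n-1}$; using $L \leq n$ from Proposition \ref{prop:simplebds} yields a ratio at most $\bigl(\tfrac{n}{n-1}\bigr)^{n-1} = \bigl(1+\tfrac{1}{n-1}\bigr)^{n-1} < e$. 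The second inequality is parallel: from $D^2 \leq U\bigl(\tfrac{n^2-U}{n-1}\bigr)^{n-1}$ and $U < n^2$ we get $D^2 < n^2\bigl(\tfrac{n^2-U}{n-1}\bigr)^{n-1}$, which rearranges to $n^2 - U > (n-1)(D/n)^{2/(n-1)}$. In the equality case $\sigma_1=\cdots=\sigma_{n-1}$, substituting $D^2 = U\bigl(\tfrac{n^2-U}{n-1}\bigr)^{n-1}$ into the ratio and simplifying gives $(n^2/U)^{1/(n-1)}$; since $U \geq n$, this is at most $n^{1/(n-1)}$, and the elementary inequality $n \leq 2^{n-1}$ for $n \geq 2$ gives $n^{1/(n-1)} \leq 2$.

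For the third inequality, squaring the bound $D \leq \tfrac{2\sqrt{\rho}}{\rho+1}n^{n/2}$ gives $(\rho+1)^2 \leq \tfrac{4\rho\,n^n}{D^2}$, and dividing by $\rho$ yields $\rho + 2 + 1/\rho \leq 4n^n/D^2$; discarding the positive term $2 + 1/\rho$ produces the strict inequality $\rho < 4n^n/D^2$. In the equality case $\sigma_2=\cdots=\sigma_{n-1}=\sqrt{n}$, the exact identity $D^2 = \tfrac{4\rho}{(\rho+1)^2}n^n$ gives $4n^n/D^2 = (\rho+1)^2/\rho$, so $\sqrt{4n^n/D^2}/\sqrt{\rho} = 1 + 1/\rho \leq 2$ (using $\rho \geq 1$), which matches the claimed factor. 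There is no real conceptual obstacle here: the entire proof is routine algebra plus the sum-of-squares normalization of the singular values; the only mild care needed is in choosing the right crude a priori bound at each step so that the final factor of slack can be identified with the claimed equality-case ratio.
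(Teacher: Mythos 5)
Your proof is correct and takes essentially the same route as the paper's: invert each bound of Proposition \ref{prop:upperbdD} by replacing $n^2-L$ with $n^2$, $\sqrt{U}$ with $n$, and $(\rho+1)^2/\rho$ with $\rho$, then track the resulting slack using $L \le n \le U$ and $\rho \ge 1$. If anything, you verify the equality-case loss factors more explicitly than the paper's one-line proof does.
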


\begin{proof}
Apply Proposition \ref{prop:upperbdD}.
Notice that $n^2-L < n^2$ and the two sides are within a factor of $n/(n-1)$ of each other;
$\sqrt{U} < n$ and the two sides are within a factor of $\sqrt{n}$ of each other; and $\rho+1 > \rho$.
\end{proof}

\subsection{Upper bounds for tightness quantities of $(E,B)$}
\label{subsec:boundsUpperEB}

We know that for a basis pair $(E,B)$, $L_E(B)>0$, $U_E(B)<n^2$, $\rho_E(B)<\infty$, and $D_E(B)>0$.
In this subsection, we give quantitative bounds on how close these quantities can get to these extremes without collapsing to 0, $n^2$, or $\infty$. This can be interpreted as how far $(E,B)$ can be from being spectral without ceasing to be a basis pair.

The estimates obtained have to depend on both $\abs{E}=n$ and the ambient space $G$, as Example \ref{ex:ebfarspec} demonstrates. However, Theorem \ref{thm:upperbdcondeb} shows that they depend on $G$ in an interesting way.
Specifically, they depend on \emph{which} prime powers are present in the $n_i$ in the decomposition $G \cong \Z_{n_1}\times \Z_{n_2} \times\dots\times \Z_{n_\ell}$, but not \emph{the number of times} the prime powers appear.
For example, for $G=\Z_m^d$, the estimates do not depend on $d$ at all.

This phenomenon may be partly explained by the affine restriction property (Prop. \ref{prop:affinerestrict}).
Indeed, for $E \subseteq \Z_m^d$, as $d$ grows but $\abs{E}=n$ and $m$ stay fixed, it is likely that $E$ will lie in a coset of a ``small'' direct summand of $G$ whose size depends on $n$ and $m$ but not $d$.
If this is true, then by the affine restriction property, $d$ plays no role in the growth of $\rho_E(B)$.

Miraculously, in Section \ref{subsec:boundsUpperE}, we will show that for quantities $L(E)$, $U(E)$, $\rho(E)$, and $D(E)$, the dependence on the group $G$ can be completely eliminated, so that estimates on these quantities only depend on $\abs{E}=n$.
The intuition behind this may be the more-space intuition mentioned in Section \ref{subsec:boundsNearlySpec}.
As $G$ grows, the individual pair $(E,B)$ may get further from being spectral, but there are more choices of $B$ to choose from, so in the end the set $E$ itself is not so far from being spectral.

We now outline the strategy of proofs in this section. Bounding $\rho_E(B)$ away from infinity (or $L_E(B)$ away from zero, or $U_E(B)$ away from $n^2$) is a difficult task.
However, as $D_E(B)$ is the absolute value of an integer combination of roots of unity, it is easier to bound this quantity away from zero.
Specifically, we use a standard argument by permuting roots of unity in Lemma \ref{lem:lowerbdrootunity}.
This directly gives an estimate on $D_E(B)$.
Then we translate this into estimates on other tightness quantities via Proposition \ref{prop:invertupperbdD}.

Let $\phi$ be Euler's totient function, that is, $\phi(n)$ counts the number of integers $1 \leq k \leq n$ that are relatively prime to $n$.

\begin{lemma}
\label{lem:lowerbdrootunity}
Let $m$ be a positive integer, and $\omega$ be a primitive $m$th root of unity. Let $P(x) \in \Z[x]$ be such that
$P(\omega) \neq 0$.
Suppose that $\abs{P(\omega^k)} \leq C$ for all $1 \leq k \leq m$ that are relatively prime to $m$. Then $\abs{P(\omega)} \geq C^{1-\phi(m)}$.
\end{lemma}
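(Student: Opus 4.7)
The plan is to exploit the Galois-theoretic symmetry of the primitive $m$th roots of unity. Recall that $\mathrm{Gal}(\Q(\omega)/\Q)\cong(\Z/m\Z)^*$, with an element $k$ relatively prime to $m$ acting by $\omega\mapsto\omega^k$. The key observation is that the product
\[
N \;=\; \prod_{\substack{1\leq k\leq m\\ \gcd(k,m)=1}} P(\omega^k)
\]
is invariant under this Galois action (the action just permutes the factors), so $N\in\Q$. Moreover, each $P(\omega^k)$ is an algebraic integer since $\omega^k$ is and $P\in\Z[x]$, so $N$ is an algebraic integer. A rational algebraic integer is a rational integer, hence $N\in\Z$.

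Next I would argue that $N\neq 0$. Indeed, the factors $P(\omega^k)$ for $\gcd(k,m)=1$ are precisely the Galois conjugates of $P(\omega)$. Since $P(\omega)\neq 0$ by hypothesis and Galois automorphisms are field isomorphisms, none of the $P(\omega^k)$ vanishes. Therefore $|N|\geq 1$.

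Finally, separate off the $k=1$ factor and apply the hypothesis to the remaining $\phi(m)-1$ factors:
\[
1 \;\leq\; |N| \;=\; |P(\omega)| \cdot \prod_{\substack{\gcd(k,m)=1\\ k\neq 1}} |P(\omega^k)| \;\leq\; |P(\omega)| \cdot C^{\phi(m)-1}.
\]
Solving gives $|P(\omega)|\geq C^{1-\phi(m)}$, as desired. (Note this works regardless of whether $C\geq 1$ or $C<1$.)

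There is no serious obstacle here; the only subtle point is to verify that $N$ is actually a rational \emph{integer}, which requires both Galois invariance (giving rationality) and the algebraic-integer property of cyclotomic values (giving integrality). The argument is a standard "product over Galois conjugates" trick, and its utility later for bounding $D_E(B)$ will come from recognizing $\det T(E,B)$ as a polynomial in $\omega$ with integer coefficients whose values at other primitive roots of unity can be controlled by the trivial bound $|\det T(E,B)|\leq n^{n/2}$ from Hadamard's inequality.
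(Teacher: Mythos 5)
Your proof is correct and is essentially the paper's own argument: both form the product of $P(\omega^k)$ over $k$ coprime to $m$, show it is a nonzero rational integer, and divide off the $\phi(m)-1$ remaining factors bounded by $C$. The only cosmetic difference is that you justify integrality via Galois invariance plus algebraic integrality, while the paper invokes symmetric polynomials in the roots of a monic integer polynomial; these are two standard phrasings of the same fact.
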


\begin{proof}
Because $P(\omega) \neq 0$, for any $k$ that is relatively prime to $m$, $P(\omega^k) \neq 0$. Thus
$$S = \prod_{1 \leq k \leq m, \gcd(k,m)=1} P(\omega^k) \neq 0.$$
Notice that $S$ is the value of a symmetric polynomial evaluated at $\omega^k$, $1 \leq k \leq m$ with $\gcd(k,m)=1$.
Since these $\omega^k$ are roots of a monic polynomial with integer coefficients, $S$ must be an integer. Hence $\abs{S} \geq 1$. It follows that
$$\abs{P(\omega)} = \frac{\abs{S}}{\prod_{2 \leq k \leq m, \gcd(k,m)=1} \abs{P(\omega^k)}} \geq \frac{1}{C^{\phi(m)-1}}.$$
\end{proof}

\begin{remark}
Lemma \ref{lem:lowerbdrootunity} is a generalization of a standard argument used to find a lower bound for a nonzero sum of $2m$-th roots of unity. To see the connection, notice that a nonzero sum of $N$ $2m$-th roots of unity can be written as $P(\omega)$ for some $P(x) \in \Z[x]$, where the sum of coeffients of $P(x)$ is at most $N$.
Then $\abs{P(\omega^k)} \leq N$.
So the lemma implies that this sum has absolute value at least $N^{1-\phi(m)}$.
See \cite{My} for further results on this problem.
\end{remark}

Let $G$ be a finite group, written in multiplicative notation. Following \cite[p. 202]{Ro}, the \emph{minimal exponent} of $G$ is the smallest positive integer $m$ such that $g^{m}=1$ for all $g\in G$.
In other words, it is the least common multiple of the orders of all the elements of $G$. If $G=\Z_{n_1}\times\Z_{n_2}\times\dots\times\Z_{n_\ell}$, then the minimal exponent of $G$ is the least common multiple of $n_{1}, \dots, n_{\ell}$. Hence, the minimal exponent of a group is the number $M$ in the proposition below, but this number can be defined without reference to the specific way that $G$ is decomposed into a direct product of cyclic groups.

\begin{theorem}
\label{thm:upperbdcondeb}
Let $G=\Z_{n_1}\times\Z_{n_2}\times\dots\times\Z_{n_\ell}$ and $M=\lcm(n_1,n_2,\dots,n_\ell)$.
Let $E \subseteq G$ with $\abs{E}=n$. If $(E,B)$ is a basis pair, then
$D_E(B) \geq n^{n(1-\phi(M))/2}$. Consequently,
$$\rho_E(B) < 4n^{n\phi(M)}.$$
\end{theorem}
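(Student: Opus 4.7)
The plan is to use Lemma \ref{lem:lowerbdrootunity} to bound $D_E(B)$ from below, and then to translate this into a bound on $\rho_E(B)$ via Proposition \ref{prop:invertupperbdD}. The guiding observation is that the entries of the Fourier matrix $T(E,B)$ are all $M$-th roots of unity, where $M$ is the minimal exponent of $G$; this is because every character $b \in \widehat{G}$ satisfies $b^M = 1$. Concretely, if $\omega = \chi(1/M)$, then formula \eqref{eq:ghatacts} lets us rewrite each entry $b_j(x_i) = \omega^{a_{ij}}$ for suitable integers $a_{ij}$ (obtained by clearing denominators using $\lcm$).

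First, I would define the integer Laurent polynomial
\[
P(x) = \sum_{\sigma \in S_n} \operatorname{sgn}(\sigma) \prod_{i=1}^n x^{a_{i\sigma(i)}} \;\in\; \mathbb{Z}[x,x^{-1}],
\]
which via the Leibniz expansion of the determinant satisfies $P(\omega) = \det T(E,B)$. Since $(E,B)$ is a basis pair, Proposition \ref{Basis pair} gives $P(\omega) \neq 0$. The key step is then to observe that for any integer $k$, the substitution $x \mapsto \omega^k$ gives $P(\omega^k) = \det N_k$, where $N_k$ is an $n \times n$ matrix whose entries are $M$-th roots of unity (they are values of some characters of $G$ on elements of $E$). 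Each column of $N_k$ therefore has Euclidean norm $\sqrt{n}$, so by Hadamard's inequality (cf.\ Remark \ref{remark:HadamardIneq})
\[
|P(\omega^k)| \leq n^{n/2}
\]
for every $k$, and in particular for every $k$ with $\gcd(k,M)=1$.

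Lemma \ref{lem:lowerbdrootunity}, applied with $C = n^{n/2}$ and the primitive $M$-th root of unity $\omega$, immediately yields
\[
D_E(B) = |P(\omega)| \geq \bigl(n^{n/2}\bigr)^{1-\phi(M)} = n^{n(1-\phi(M))/2},
\]
which is the first claimed inequality. For the second, I would plug this directly into the bound $\rho < 4n^n/D^2$ from Proposition \ref{prop:invertupperbdD} to obtain
\[
\rho_E(B) < \frac{4 n^n}{n^{n(1-\phi(M))}} = 4 n^{n\phi(M)}.
\]

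I expect no serious obstacle: the only subtlety is bookkeeping, namely verifying that $P(\omega^k)$ really is the determinant of a Fourier-like matrix (or at any rate a matrix of $M$-th roots of unity) so that Hadamard's inequality applies with the uniform bound $n^{n/2}$, independent of $k$. One could handle this most cleanly by noting that Lemma \ref{lem:lowerbdrootunity} only requires a uniform bound on $|P(\omega^k)|$, not any structural interpretation of the shifted matrix, so the Hadamard step reduces to the trivial estimate $|P(\omega^k)| \leq \sum_\sigma 1 \cdot 1 = n!$—but a sharper bookkeeping via Hadamard directly gives the optimal $n^{n/2}$ needed to match the statement.
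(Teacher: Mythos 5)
Your proposal is correct and follows essentially the same route as the paper: the paper likewise forms the integer polynomial $P(x)=\det M(x)$ with $P(\omega)=\det T(E,B)$, observes that $P(\omega^k)=\det T(E,kB)$ has absolute value at most $n^{n/2}$ (via Proposition \ref{prop:simplebds}, which is the same Hadamard-type estimate you invoke), and then applies Lemma \ref{lem:lowerbdrootunity} followed by Proposition \ref{prop:invertupperbdD}. The only cosmetic difference is your use of a Laurent polynomial, which is harmlessly converted to an ordinary polynomial by choosing nonnegative exponent representatives.
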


\begin{proof}
The results for $n=1$ are obvious, so assume that $n>1$.
In a similar manner to the proof of Proposition \ref{prop:scalespec}, let $M(x)$ be the $n \times n$ matrix whose $(i,j)$ entry is
$$x^{\paren{\frac{(x_i)_1 (b_j)_1}{n_1}+\frac{(x_i)_2 (b_j)_2}{n_2}+\dots+\frac{(x_i)_\ell (b_j)_\ell}{n_\ell}}M}.$$
(This is slightly different from \eqref{eq:entryMx}.)
Let $P(x) = \det M(x) \in \Z[x]$.
If $\omega = \chi(1/M)$, then for every $1 \leq k \leq M$ that is relatively prime to $M$,
$T(E,kB) = M(\omega^k)$. Hence
$$D_E(kB) = \abs{\det M(\omega^k)}=\abs{P(\omega^k)}.$$
By Proposition \ref{prop:simplebds}, $\abs{P(\omega^k)} \leq n^{n/2}$, so Lemma \ref{lem:lowerbdrootunity} yields the desired bound for $D_E(B)$.
Finally, we apply Proposition \ref{prop:invertupperbdD} to obtain the bound for $\rho_E(B)$.
\end{proof}

From Theorem \ref{thm:upperbdcondeb}, we can also derive a lower bound for $L_E(B)$ and an upper bound for $U_E(B)$ via Proposition \ref{prop:invertupperbdD}.

The crucial point is that the bounds obtained in Theorem \ref{thm:upperbdcondeb} depend on $G$ even for fixed $\abs{E}=n$.
The example below shows that this must be the case.

\begin{example}
\label{ex:ebfarspec}
Let $E=B=\set{0,1} \subseteq \Z_p$, $p$ a prime. Then
$$T(E,B) = \begin{pmatrix} 1 & 1 \\ 1 & \chi(1/p) \end{pmatrix}.$$
It is easily checked that $L_E(B) \to 0$, $U_E(B) \to 4$, $\rho_E(B) \to \infty$, and $D_E(B) \to 0$ as $p \to \infty$.
Therefore, $(E,B)$ gets arbitrarily far away from being spectral as $p \to \infty$.
\end{example}

\subsection{Upper bounds for tightness quantities of $E$}
\label{subsec:boundsUpperE}

Finally, in this subsection, we derive a bound on how far from being spectral a set $E$ can be.
The arguments are similar to those used in Section \ref{subsec:boundsUpperEB}, but the new idea is the following.
We will demonstrate that, if $B$ is carefully picked, then we can make $(E,B)$ close to being spectral independent of the ambient group $G$, even though an individual pair $(E,B)$ may be far from being spectral.
Specifically, we consider $kB$ for all $k$ such that multiplication by $k$ is an invertible $\Z$-linear transformation on $G$.
These pairs average each other out, and one of $(E,kB)$ must be rather close to being spectral, as shown in Lemma \ref{lem:lowerbdrootunitykB}.
Then estimates on tightness quantities follow as before.

The results of this section show that a weaker sense of the more-space intuition in Section \ref{subsec:boundsNearlySpec} holds true.
That is, for $\abs{E}=n$ fixed and as $G$ grows, there are more choices of $B$ to choose from, so one of $(E,B)$ must be quite close to being spectral.
In this case, it turns out to suffice to pick a random $B$ and consider $kB$ over all $k$.

The takeaway of these results is the following.
To get a sequence of sets $E$ that are further and further away from being spectral, the size of $E$ has to increase to infinity, no matter which sequence of $G$ we choose.

We begin with a variant of Lemma \ref{lem:lowerbdrootunity}.

\begin{lemma}
\label{lem:lowerbdrootunitykB}
Let $m$ be a positive integer, and $\omega$ be a primitive $m$th root of unity. Let $P(x) \in \Z[x]$ be such that $P(\omega) \neq 0$.
Then there is a $1 \leq k \leq m$ that is relatively prime to $m$ such that $\abs{P(\omega^k)} \geq 1$.
\end{lemma}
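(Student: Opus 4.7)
The plan is to mimic the proof of Lemma \ref{lem:lowerbdrootunity}, but instead of bounding a single $|P(\omega)|$ from below using an upper bound on the other factors of the product, I extract the weaker conclusion that \emph{some} factor of the product must have absolute value at least one.

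First, I would form the product
\[
S \;=\; \prod_{\substack{1 \leq k \leq m \\ \gcd(k,m)=1}} P(\omega^k),
\]
the product being taken over all primitive $m$th roots of unity. Since the Galois group of $\Q(\omega)/\Q$ acts transitively on these primitive roots, and $P(\omega) \neq 0$, each factor $P(\omega^k)$ is nonzero, so $S \neq 0$. Moreover, $S$ is the evaluation of a symmetric polynomial with integer coefficients at the roots of the $m$th cyclotomic polynomial $\Phi_m(x)$, which is monic with integer coefficients; hence $S$ is both a rational number and an algebraic integer, so $S \in \Z$. Combining these two facts gives $|S| \geq 1$.

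Next, I would argue by contradiction: if $|P(\omega^k)| < 1$ for every $k$ with $1 \leq k \leq m$ and $\gcd(k,m)=1$, then
\[
|S| \;=\; \prod_{\substack{1 \leq k \leq m \\ \gcd(k,m)=1}} |P(\omega^k)| \;<\; 1,
\]
contradicting $|S| \geq 1$. Therefore at least one $k$ in the product satisfies $|P(\omega^k)| \geq 1$, which is exactly the conclusion.

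There is no serious obstacle here; the argument is essentially a one-line repackaging of the symmetric-function/algebraic-integer trick already used in Lemma \ref{lem:lowerbdrootunity}. The only point worth being careful about is verifying that $P(\omega^k) \neq 0$ for every $k$ coprime to $m$ (so that $S$ is genuinely nonzero), which follows from the Galois-conjugacy of the primitive $m$th roots of unity together with the hypothesis $P(\omega) \neq 0$.
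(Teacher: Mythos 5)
Your proof is correct and follows essentially the same route as the paper: the paper's proof simply cites the quantity $S$ from Lemma \ref{lem:lowerbdrootunity}, notes $\abs{S}\geq 1$, and concludes that some factor has absolute value at least one. Your verification that each $P(\omega^k)\neq 0$ via Galois conjugacy matches the observation already made in the proof of Lemma \ref{lem:lowerbdrootunity}.
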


\begin{proof}
Consider the quantity $S$ defined in Lemma \ref{lem:lowerbdrootunity}. We have $\abs{S} \geq 1$. This implies that there is a $1 \leq k \leq m$, $\gcd(k,m)=1$, such that $\abs{P(\omega^k)} \geq 1$.
\end{proof}

The next proposition shows that for any basis pair $(E,B)$, one of $(E,kB)$ is rather close to being spectral.

\begin{prop}
\label{prop:upperBdPairLoopAround}
Let $G=\Z_{n_1}\times\Z_{n_2}\times\dots\times\Z_{n_\ell}$ and $M=\lcm(n_1,n_2,\dots,n_\ell)$.
Let $E \subseteq G$ with $\abs{E}=n>1$. Suppose that $(E,B)$ is a basis pair. Then there is a $1 \leq k \leq M$ that is relatively prime to $M$ such that
$D_E(kB) \geq 1$. Consequently, for this $k$,
\begin{alignat*}{2}
L_E(kB) &> \paren{\frac{n-1}{n^2}}^{n-1} &&> \frac{1}{en^{n-1}}, \\
U_E(kB) &< n^2 - \frac{n-1}{n^\frac{2}{n-1}} &&\leq n^2-\frac{n-1}{4}, \\
\rho_E(kB) &< 4n^n.
\end{alignat*}
\end{prop}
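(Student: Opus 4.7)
The plan is to mirror the structure of the proof of Theorem~\ref{thm:upperbdcondeb}, but instead of extracting a lower bound on $|P(\omega)|$ itself via Lemma~\ref{lem:lowerbdrootunity}, we extract a lower bound on $|P(\omega^k)|$ for some well-chosen $k$ via the variant Lemma~\ref{lem:lowerbdrootunitykB}. Concretely, I would begin by reintroducing the matrix-valued polynomial $M(x)$ whose $(i,j)$-entry is
\[
x^{\left(\frac{(x_i)_1 (b_j)_1}{n_1}+\dots+\frac{(x_i)_\ell (b_j)_\ell}{n_\ell}\right)M},
\]
so that $M(x) \in \Z[x]$ and, for $\omega = \chi(1/M)$ and any integer $k$, one has $T(E,kB) = M(\omega^k)$. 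Setting $P(x) = \det M(x) \in \Z[x]$ then gives $D_E(kB) = |P(\omega^k)|$ for every such $k$.

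Next I would observe that because $(E,B)$ is a basis pair, Proposition~\ref{Basis pair} ensures $D_E(B) > 0$, i.e.\ $P(\omega) \neq 0$. This is exactly the hypothesis needed to apply Lemma~\ref{lem:lowerbdrootunitykB}, which yields some $1 \leq k \leq M$ with $\gcd(k,M)=1$ such that $|P(\omega^k)| \geq 1$. For this $k$, we immediately get $D_E(kB) \geq 1$, which is the first claim.

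The remaining bounds on $L_E(kB)$, $U_E(kB)$, and $\rho_E(kB)$ then follow by plugging $D = D_E(kB) \geq 1$ into the three inequalities of Proposition~\ref{prop:invertupperbdD}. This gives the sharper estimates $L_E(kB) > ((n-1)/n^2)^{n-1}$, $U_E(kB) < n^2 - (n-1)(1/n)^{2/(n-1)}$, and $\rho_E(kB) < 4n^n$. The secondary, cleaner bounds require two elementary inequalities: $(1-1/n)^{n-1} \geq 1/e$ for $n \geq 2$ (which one checks by noting that the sequence increases to $1/e$), giving $((n-1)/n^2)^{n-1} \geq 1/(e n^{n-1})$; and $n^{2/(n-1)} \leq 4$ for $n \geq 2$ (with equality at $n=2$ and decreasing thereafter), giving $(n-1)/n^{2/(n-1)} \geq (n-1)/4$.

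Frankly, there is no substantial obstacle here: the entire conceptual content has been packaged into Lemma~\ref{lem:lowerbdrootunitykB} and Proposition~\ref{prop:invertupperbdD}. The only thing to be careful about is confirming that the polynomial construction is valid (i.e.\ that the exponent in the definition of $M(x)$ is an integer, which follows from the presence of the factor $M$), and handling the two elementary inequalities cleanly. The real novelty of the statement lies in the observation that \emph{some} rotation $kB$ of $B$ must give a pair that is not too far from spectral, and this is exactly what Lemma~\ref{lem:lowerbdrootunitykB} captures by averaging the sizes of $|P(\omega^k)|$ against the constraint that their product is a nonzero integer.
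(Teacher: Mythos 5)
Your proposal is correct and takes essentially the same approach as the paper, which likewise runs the argument of Theorem \ref{thm:upperbdcondeb} with Lemma \ref{lem:lowerbdrootunitykB} in place of Lemma \ref{lem:lowerbdrootunity} and then applies Proposition \ref{prop:invertupperbdD}. One small slip in your parenthetical: the sequence $(1-1/n)^{n-1}$ \emph{decreases} to $1/e$ rather than increases, and it is precisely this that gives the strict inequality $\left(\frac{n-1}{n^2}\right)^{n-1} > \frac{1}{en^{n-1}}$.
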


\begin{proof}
An analogous argument to the proof of Theorem \ref{thm:upperbdcondeb}, but with Lemma \ref{lem:lowerbdrootunitykB} in place of Lemma \ref{lem:lowerbdrootunity}, gives the estimate $D_E(kB) \geq 1$.
Now apply Proposition \ref{prop:invertupperbdD} to obtain the rest of the estimates.
\end{proof}

Notice that in Proposition \ref{prop:upperBdPairLoopAround}, the second steps in the bounds for $L_E(kB)$ and $U_E(kB)$ lose only at most constant factors.

We now present our main result on estimates on tightness quantities of $E$, which are independent of the group $G$.

\begin{theorem}
\label{thm:upperBdSetBySize}
Let $E \subseteq G$ with $\abs{E}=n>1$.
Then
$$L(E) > \frac{1}{en^{n-1}}, \quad U(E) < n^2-\frac{n-1}{4}, \quad \rho(E) < 4n^n, \quad \text{and} \quad D(E) \geq 1.$$
In particular, these estimates are independent of the group $G$.
\end{theorem}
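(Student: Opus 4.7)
The plan is to deduce Theorem \ref{thm:upperBdSetBySize} as an essentially immediate corollary of Proposition \ref{prop:upperBdPairLoopAround} together with the existence of basis pairs. The main observation is that the set quantities $L(E), U(E), \rho(E), D(E)$ are defined as optimizations over all choices of $B$ such that $(E,B)$ is a basis pair; hence, exhibiting a single $B$ that achieves the claimed bounds suffices.

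First, I would invoke the generalization of Corollary \ref{cor:basispairexist} to arbitrary finite abelian groups (the content of Theorem \ref{thm:introRieszExist}, cf. Sec. \ref{subsec:prelimGeneralFinite}) to produce some $B \subseteq \widehat{G}$ with $(E,B)$ a basis pair. Next, I would apply Proposition \ref{prop:upperBdPairLoopAround} to this pair to obtain an integer $k$, relatively prime to the minimal exponent $M = \lcm(n_1, \dots, n_\ell)$ of $G$, such that simultaneously
\[
L_E(kB) > \frac{1}{en^{n-1}}, \quad U_E(kB) < n^2 - \frac{n-1}{4}, \quad \rho_E(kB) < 4n^n, \quad D_E(kB) \geq 1.
\]
Since multiplication by $k$ is an invertible $\Z$-linear transformation on $G$ (hence also on $\widehat{G}$), the set $kB$ is a valid candidate in the optimizations defining $L(E), U(E), \rho(E), D(E)$.

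Finally, I would conclude from the definitions in \ref{def:setquantities} that
\[
L(E) \geq L_E(kB), \quad U(E) \leq U_E(kB), \quad \rho(E) \leq \rho_E(kB), \quad D(E) \geq D_E(kB),
\]
which yields the four displayed bounds. The independence from $G$ follows automatically because the bounds in Proposition \ref{prop:upperBdPairLoopAround} for the pair $(E,kB)$ depend only on $n$, not on $M$ or any other feature of $G$.

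Since all heavy lifting has been done in Lemma \ref{lem:lowerbdrootunitykB}, Proposition \ref{prop:upperBdPairLoopAround}, and the existence result for basis pairs, there is no substantive obstacle here; the only thing to be careful about is to explicitly note that the ``loop around'' step replaces $B$ by $kB$ without leaving the class of admissible partners for $E$, so that the inequalities for the individual pair $(E,kB)$ transfer to the extremal set quantities.
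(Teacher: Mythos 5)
Your proposal is correct and follows essentially the same route as the paper, whose proof of Theorem \ref{thm:upperBdSetBySize} is exactly the one-line combination of the existence of a basis pair (Cor. \ref{cor:basispairexist}) with Proposition \ref{prop:upperBdPairLoopAround}. Your additional remark that $kB$ remains an admissible partner, so the pairwise bounds transfer to the extremal set quantities, is a correct and worthwhile explicit justification of a step the paper leaves implicit.
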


\begin{proof}
Follows directly from the existence of a basis pair (Cor. \ref{cor:basispairexist}) and Proposition \ref{prop:upperBdPairLoopAround}.
\end{proof}

\section{Decomposition}
\label{sec:decompose}

Let $G=H \oplus K$.
In this section, we investigate what happens when a set $E \subseteq H$ has its ``dimension'' increased to become the set $E \times K \subseteq G$.
An interesting conclusion of Section \ref{subsec:decomposeCartesian} is that $E \times K$ is not further from being spectral than $E$.
Specifically, $\rho(E \times K) \leq \rho(E)$, and similarly for other tightness quantities.
Even more strikingly, it will later turn out in Section \ref{subsec:decomposeExact} that
$\rho(E \times K) = \rho(E)$, and similarly for other tightness quantities. Thus, $E$ and $E \times K$ are exactly as far from being spectral as one another.

We first need some preliminary definitions.

\begin{definition}
\label{def:normalizedquants}
For an equal-size pair $(E,B)$, define the \emph{normalized tightness quantities} as follows:
\begin{itemize}
\item $\widetilde{L}_E(B) = n/L_E(B)$,
\item $\widetilde{U}_E(B) = U_E(B)/n$,
\item $\widetilde{\rho}_E(B) = \rho_E(B)$,
\item $\widetilde{D}_E(B) = \sqrt{n}/D_E(B)^{1/n}.$
\end{itemize}
Notice that the normalized version of $\rho$ is itself.
\end{definition}

Let $Q$ denote any tightness quantity
and let $\widetilde{Q}$ be its normalized version.
Then $\widetilde{Q}_E(B) \geq 1$ and it is one if and only if $(E,B)$ is a spectral pair.
Accordingly, define
$$\widetilde{Q}(E) = \min_B \widetilde{Q}_E(B).$$
Notice that $\widetilde{Q}(E)=1$ if and only if $E$ is a spectral set.
The relationship between $Q(E)$ and $\widetilde{Q}(E)$
is the same as the relationship between $Q_E(B)$
and $\widetilde{Q}_E(B)$ in Definition \ref{def:normalizedquants}.

\subsection{Cartesian products}
\label{subsec:decomposeCartesian}

In this subsection, we study basic properties of $(E,B)$ when $E$ and $B$ are Cartesian products. Later, in Section \ref{subsec:decomposeMultiTile}, we will study $E$ which are multi-tiles by subgroups, generalizing Cartesian products where one factor is the full direct summand.

Specifically, let $H$ and $K$ be finite abelian groups, and let $G=H \oplus K$.
For $E_1 \subseteq H$ and $E_2 \subseteq K$, we consider the Cartesian product $E=E_1 \times E_2 \subseteq G$.
Notice that this is also the Minkowski sum $E=E_1+E_2$ under the usual identification.

Recall that $\widehat{G} \cong \widehat{H} \times \widehat{K}$, where for any $\hat{h} \in \widehat{H}$ and $\hat{k} \in \widehat{K}$, $(\hat{h},\hat{k})$ acts as an element of $\widehat{G}$ by $(\hat{h},\hat{k})(h,k) = \hat{h}(h)\hat{k}(k)$ for any $h \in H$ and $k \in K$.
So for any $B_1 \subseteq \widehat{H}$ and $B_2 \subseteq \widehat{K}$, we can identify $B = B_1 \times B_2$ with a subset of $\widehat{G}$, and we will do so without further comments.
Notice that since both $B$ and the set $E$ from above are Cartesian products, there is duality between $E$ and $B$ in our situation.

If $(E_1,B_1)$ and $(E_2,B_2)$ are equal-size pairs,
then $(E,B)$ is also an equal-size pair.
Our goal in this subsection is to show that the three pairs are closely related.
We start with the next proposition, which relates
their Fourier matrices.

\begin{prop}
\label{prop:TEBtensor}
Let $(E_1, B_1) \subseteq H \times \widehat{H}$ and
$(E_2,B_2) \subseteq K \times \widehat{K}$ be equal-size pairs.
Let $E=E_1 \times E_2$ and $B=B_1 \times B_2$.
Then, up to an ordering of rows and columns,
$$T(E,B) = T(E_1,B_1) \otimes T(E_2,B_2),$$
where $\otimes$ is the Kronecker product.
\end{prop}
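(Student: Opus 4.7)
The plan is to verify the claimed equality by a direct entrywise computation, using the fact that a character of $H \oplus K$ factors as a product of a character of $H$ with a character of $K$, which is exactly the multiplicative structure of the Kronecker product.

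First I would fix convenient orderings. Write $E_1 = \set{x_1^{(1)}, \dots, x_{n_1}^{(1)}}$ and $E_2 = \set{x_1^{(2)}, \dots, x_{n_2}^{(2)}}$, and order $E = E_1 \times E_2$ lexicographically by pairs $(i_1, i_2)$ so that the row of $T(E,B)$ indexed by $(x_{i_1}^{(1)}, x_{i_2}^{(2)})$ sits in position $(i_1 - 1)n_2 + i_2$. Do the analogous thing for $B_1$, $B_2$, and $B = B_1 \times B_2$. With these orderings, the $((i_1,i_2),(j_1,j_2))$-entry of the Kronecker product $T(E_1,B_1) \otimes T(E_2,B_2)$ is, by definition, $T(E_1,B_1)_{i_1,j_1} \, T(E_2,B_2)_{i_2,j_2}$.

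Second, I would compute the corresponding entry of $T(E,B)$. The identification $\widehat{H \oplus K} \cong \widehat{H} \times \widehat{K}$ recalled just before the statement gives, for $b_1 \in \widehat H$, $b_2 \in \widehat K$, $h \in H$, $k \in K$,
\[
(b_1, b_2)(h,k) = b_1(h) \, b_2(k).
\]
Applying this with $(h,k) = (x_{i_1}^{(1)}, x_{i_2}^{(2)})$ and $(b_1,b_2) = (b_{j_1}^{(1)}, b_{j_2}^{(2)})$ shows that the corresponding entry of $T(E,B)$ is $b_{j_1}^{(1)}(x_{i_1}^{(1)}) \, b_{j_2}^{(2)}(x_{i_2}^{(2)})$, which matches the entry of the Kronecker product computed above.

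There is no real obstacle here; the only subtle point is being explicit about the ordering convention that makes the tensor identification literal rather than merely up to a permutation, which is why the statement is phrased ``up to an ordering of rows and columns.'' I would simply remark that any two orderings of $E$ (respectively $B$) differ by a permutation, so the Fourier matrices differ by simultaneous row/column permutations, and hence the lexicographic choice above is no loss of generality.
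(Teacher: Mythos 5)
Your proposal is correct and follows essentially the same route as the paper's own proof: both verify the identity entrywise using the factorization $(b_1,b_2)(x_1,x_2)=b_1(x_1)b_2(x_2)$ of characters on $H\oplus K$ and then invoke the definition of the Kronecker product. Your version is merely more explicit about the lexicographic ordering, which the paper subsumes under the phrase ``up to an ordering of rows and columns.''
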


\begin{proof}
The entry of $T(E,B)$ in the column corresponding to $(b_1,b_2)$, $b_i \in B_i$, and the row corresponding to $(x_1,x_2)$, $x_i \in E_i$, is given by 
$$(b_1,b_2)(x_1,x_2) = b_1(x_1) b_2(x_2).$$
On the other hand, the entry of $T(E_i,B_i)$ in the column corresponding to $b \in B_i$ and the row corresponding to $x \in E_i$ is $b(x)$.
So the result follows from the definition of the Kronecker product.
\end{proof}

Let $A$ and $B$ be square matrices,
and let $\set{\sigma_i}_{i=1}^n$ and $\set{\tau_j}_{j=1}^m$ be the singular values of $A$ and $B$, respectively, counting multiplicities.
It is known (see \cite[Thm. 4.2.15]{HJ}) that the singular values of the Kronecker product $A \otimes B$ are exactly
$\set{\sigma_i \tau_j}_{1 \leq i\leq n, 1 \leq j \leq m}$, counting multiplicities.
Using this, the tightness quantities of the three pairs can be related to one another, as the proposition below shows.

\begin{prop}
\label{prop:Qcartesianpair}
Let $(E_1, B_1) \subseteq H \times \widehat{H}$ and
$(E_2,B_2) \subseteq K \times \widehat{K}$ be equal-size pairs.
Let $E=E_1 \times E_2$ and $B=B_1 \times B_2$.
Then, for any normalized tightness quantity $\widetilde{Q}$,
$$\widetilde{Q}_E(B) = \widetilde{Q}_{E_1}(B_1)\widetilde{Q}_{E_2}(B_2).$$
\end{prop}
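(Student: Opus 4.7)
The plan is to reduce everything to Proposition \ref{prop:TEBtensor} and the multiplicative behavior of singular values under Kronecker products. Write $n_1=|E_1|$, $n_2=|E_2|$, and $n=|E|=n_1n_2$. Let $\sigma_1\leq\cdots\leq\sigma_{n_1}$ be the singular values of $T(E_1,B_1)$ and $\tau_1\leq\cdots\leq\tau_{n_2}$ those of $T(E_2,B_2)$. By Proposition \ref{prop:TEBtensor} and the Horn--Johnson fact quoted just before the statement, the singular values of $T(E,B)$ are exactly the products $\sigma_i\tau_j$ over $1\leq i\leq n_1$, $1\leq j\leq n_2$, counted with multiplicity. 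All four identities then reduce to elementary manipulations with these products, using the formulas for $L$, $U$, $\rho$, $D$ in terms of singular values from Proposition \ref{prop:exprsingval}.

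Concretely, the extreme singular values of $T(E,B)$ factor as $\sigma_{\min}\tau_{\min}$ and $\sigma_{\max}\tau_{\max}$, so $L_E(B)=L_{E_1}(B_1)L_{E_2}(B_2)$ and $U_E(B)=U_{E_1}(B_1)U_{E_2}(B_2)$. Dividing by the appropriate factors of $n_1n_2$ yields the multiplicativity of $\widetilde L$ and $\widetilde U$, and taking the ratio immediately gives the multiplicativity of $\rho=\widetilde\rho$. The only case requiring slight care is $\widetilde D$: here
\begin{equation*}
D_E(B)=\prod_{i,j}\sigma_i\tau_j=\Bigl(\prod_i\sigma_i\Bigr)^{n_2}\Bigl(\prod_j\tau_j\Bigr)^{n_1}=D_{E_1}(B_1)^{n_2}\,D_{E_2}(B_2)^{n_1},
\end{equation*}
so that
\begin{equation*}
D_E(B)^{1/n}=D_{E_1}(B_1)^{1/n_1}\,D_{E_2}(B_2)^{1/n_2},
\end{equation*}
and then $\widetilde D_E(B)=\sqrt{n}/D_E(B)^{1/n}$ factors cleanly as $\widetilde D_{E_1}(B_1)\widetilde D_{E_2}(B_2)$ after splitting $\sqrt{n_1n_2}=\sqrt{n_1}\sqrt{n_2}$.

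I do not anticipate any real obstacle: once Proposition \ref{prop:TEBtensor} is invoked, the entire statement is a one-line computation for each of $\widetilde L, \widetilde U, \widetilde\rho$, and a two-line computation for $\widetilde D$. The only mildly subtle point worth flagging in the write-up is that the normalization of $D$ by the $n$th root is precisely what cancels the mismatched exponents $n_1,n_2$ appearing in the product $D_{E_1}(B_1)^{n_2}D_{E_2}(B_2)^{n_1}$, which is the reason the normalization was chosen in Definition \ref{def:normalizedquants} in the first place.
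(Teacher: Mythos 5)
Your proposal is correct and follows the paper's own argument exactly: invoke Proposition \ref{prop:TEBtensor} together with the Horn--Johnson fact that singular values of a Kronecker product are the pairwise products, then read off each normalized quantity from Proposition \ref{prop:exprsingval}. The explicit verification that the $n$th-root normalization of $D$ cancels the exponents $n_2$ and $n_1$ is a detail the paper leaves implicit, and you have it right.
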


\begin{proof}
By Proposition \ref{prop:TEBtensor}, the singular values of $T(E,B)$ are products of singular values of $T(E_1,B_1)$ and $T(E_2,B_2)$, counting multiplicities.
The result now follows by writing $\widetilde{Q}$ in terms of singular values using Proposition \ref{prop:exprsingval}.
\end{proof}

Proposition \ref{prop:Qcartesianpair} allows us to bound tightness quantities of $E$ in terms of the corresponding quantities of $E_1$ and $E_2$, as shown in Corollary \ref{cor:bdQcartesianset} below.
In that corollary, $E_1 \subseteq H \subseteq G$, but by the affine restriction property (Prop. \ref{prop:affinerestrict}), $\widetilde{Q}(E_1;H) = \widetilde{Q}(E_1;G)$, so we need not specify the ambient group. A similar remark applies to $E_2$.

\begin{cor}
\label{cor:bdQcartesianset}
Let $E_1 \subseteq H$, $E_2 \subseteq K$,
and $E=E_1 \times E_2$.
Then, for any normalized tightness quantity $\widetilde{Q}$,
$$\widetilde{Q}(E) \leq \widetilde{Q}(E_1)\widetilde{Q}(E_2).$$
In particular, if $E_1$ and $E_2$ are spectral, then
$E$ is spectral.
\end{cor}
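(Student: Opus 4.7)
The plan is to apply Proposition \ref{prop:Qcartesianpair} directly, using the optimal partners for $E_1$ and $E_2$ to construct a partner for $E$.

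First, I would choose equal-size pairs $(E_1,B_1)$ and $(E_2,B_2)$ that are optimal for the given normalized tightness quantity, i.e., $B_i \subseteq \widehat{H}$ or $\widehat{K}$ (respectively) with $\widetilde{Q}_{E_i}(B_i) = \widetilde{Q}(E_i)$. Since we are minimizing over finitely many subsets of the correct size, the minimum is attained. (If $E_1$ or $E_2$ fails to have a basis pair the statement is vacuous, but in fact Corollary \ref{cor:basispairexist} guarantees the existence of basis pairs, so $\widetilde{Q}(E_i) < \infty$.)

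Next, I would form $B = B_1 \times B_2 \subseteq \widehat{H} \times \widehat{K} \cong \widehat{G}$, which is an equal-size partner for $E = E_1 \times E_2$. Invoking Proposition \ref{prop:Qcartesianpair}, I get
\[
\widetilde{Q}_E(B) \;=\; \widetilde{Q}_{E_1}(B_1)\,\widetilde{Q}_{E_2}(B_2) \;=\; \widetilde{Q}(E_1)\,\widetilde{Q}(E_2).
\]
By the definition $\widetilde{Q}(E) = \min_{B'} \widetilde{Q}_E(B')$, taken over all equal-size $B' \subseteq \widehat{G}$, I conclude
\[
\widetilde{Q}(E) \;\leq\; \widetilde{Q}_E(B) \;=\; \widetilde{Q}(E_1)\,\widetilde{Q}(E_2),
\]
as required. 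Note that by Proposition \ref{prop:affinerestrict} it does not matter whether $\widetilde{Q}(E_i)$ is computed in $H$, $K$, or $G$.

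For the ``in particular'' clause, if $E_1$ and $E_2$ are spectral then $\widetilde{Q}(E_1) = \widetilde{Q}(E_2) = 1$ (for any normalized tightness quantity), so $\widetilde{Q}(E) \leq 1$. But $\widetilde{Q}(E) \geq 1$ always, hence $\widetilde{Q}(E) = 1$, and $E$ is spectral. There is no real obstacle here: the work is all packaged inside Proposition \ref{prop:Qcartesianpair}, and the only thing to verify is that a product of optimal partners is still a legitimate partner, which follows from the canonical identification $\widehat{H} \times \widehat{K} \cong \widehat{G}$.
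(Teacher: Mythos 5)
Your proof is correct and follows essentially the same route as the paper: choose optimal partners $B_1$, $B_2$, form $B_1\times B_2\subseteq\widehat{G}$, apply Proposition \ref{prop:Qcartesianpair}, and conclude from the definition of $\widetilde{Q}(E)$ as a minimum. The extra details you supply (attainment of the minimum, the affine restriction remark, and the spectral case via $\widetilde{Q}=1$) are all consistent with the paper's surrounding discussion.
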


\begin{proof}
Pick $B_1 \subseteq \widehat{H}$ and $B_2 \subseteq \widehat{K}$ such that $\widetilde{Q}_{E_i}(B_i) = \widetilde{Q}(E_i)$.
By Proposition \ref{prop:Qcartesianpair},
$\widetilde{Q}_E(B_1 \times B_2) = \widetilde{Q}(E_1)\widetilde{Q}(E_2)$ with $B_1 \times B_2 \subseteq \widehat{G}$, so that
$\widetilde{Q}(E) \leq \widetilde{Q}(E_1)\widetilde{Q}(E_2)$.
\end{proof}

Corollary \ref{cor:bdQcartesianset} provides a nice generalization of the well-known fact that a Cartesian product of spectral sets is spectral; an elementary proof of this is obtained by multiplying the relevant characters to form an orthogonal basis.
An important special case of the corollary occurs below when $E_2$ is spectral, for example, when $E_2$ is the whole group $K$.

\begin{cor}
\label{cor:bdQsetprodspec}
Let $E_1 \subseteq H$ and $E_2 \subseteq K$, where $E_2$ is spectral. Let $E=E_1 \times E_2$.
Then $E$ is not further from being spectral than $E_1$. Specifically, for any normalized tightness quantity $\widetilde{Q}$,
$$\widetilde{Q}(E) \leq \widetilde{Q}(E_1).$$
In particular, $\rho(E_{1}\times E_{2})\leq \rho(E_{1})$ when $E_{2}$ is spectral.
\end{cor}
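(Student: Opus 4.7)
The plan is to obtain this as an immediate consequence of Corollary \ref{cor:bdQcartesianset}, with the only substantive observation being that spectral sets are precisely the sets whose normalized tightness quantities equal one. Concretely, I would invoke Corollary \ref{cor:bdQcartesianset} to obtain
\[
\widetilde{Q}(E) \leq \widetilde{Q}(E_1)\widetilde{Q}(E_2),
\]
and then reduce the right-hand side by showing $\widetilde{Q}(E_2)=1$.

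To justify the reduction, I would verify directly from Definition \ref{def:normalizedquants} that, for any set $F$, the condition $\widetilde{Q}(F)=1$ is equivalent to the existence of a partner $B$ with $(F,B)$ a spectral pair, i.e., $F$ being spectral. Indeed, by Proposition \ref{prop:basicPropSpec}, a spectral pair $(F,B)$ with $|F|=n$ satisfies $L_F(B)=U_F(B)=n$, $\rho_F(B)=1$, and $D_F(B)=n^{n/2}$, and each of these in turn forces the corresponding normalized quantity $\widetilde{Q}_F(B)$ to equal $1$. Since $\widetilde{Q}_F(B)\geq 1$ always holds (as noted right after Definition \ref{def:normalizedquants}), taking the minimum over $B$ shows $\widetilde{Q}(F)=1$ exactly when $F$ is spectral. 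Applying this to $F=E_2$, which is spectral by hypothesis, yields $\widetilde{Q}(E_2)=1$, and the main inequality collapses to $\widetilde{Q}(E)\leq \widetilde{Q}(E_1)$.

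The ``in particular'' statement for the Riesz ratio requires no additional argument: by Definition \ref{def:normalizedquants}, $\widetilde{\rho}=\rho$, so specializing the general inequality gives $\rho(E_1\times E_2)\leq \rho(E_1)$. There is no real obstacle here; the only bookkeeping item worth flagging is that the ambient group of $E_2$ inside the Cartesian product is $K$ and the hypothesis ``$E_2$ is spectral'' should be read relative to $K$, which matches the hypothesis of Corollary \ref{cor:bdQcartesianset} and also agrees with the ambient-group-independent nature of $\widetilde{Q}(E_2)$ guaranteed by the affine restriction property (Proposition \ref{prop:affinerestrict}).
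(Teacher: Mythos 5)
Your proposal is correct and matches the paper's (implicit) argument exactly: the corollary is stated as an immediate special case of Corollary \ref{cor:bdQcartesianset}, using the fact, already noted after Definition \ref{def:normalizedquants}, that $\widetilde{Q}(E_2)=1$ precisely when $E_2$ is spectral. Your extra verification via Proposition \ref{prop:basicPropSpec} and the remark on the ambient group are sound but not needed beyond what the paper already records.
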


As an application, we construct in Example \ref{ex:cartclosespec} below large sets $E$ that do not tile but are very close to being spectral.
The significance of such an example is the following.
It is known that a set $E \subseteq G$ that tiles by a subgroup of $G$ is spectral since this implies that $E$ tiles by a lattice. The proof that tiling by a lattice implies that $E$ is spectral is given in \cite{BHM}.
We will also prove that a set $E$ is spectral if it tiles by a subgroup in Proposition \ref{prop:TilesAreSpec}.
Recall, from Theorem \ref{thm:upperBdSetBySize}, that small sets cannot be very far from being spectral.
Thus, the example below demonstrates that good spectral behavior can occur apart from the cases of being small and tiling by a direct summand.
Later in Section \ref{sec:MultiTileGeometricComplex}, we will discuss more about the relationship between (multi-)tiling and being close to spectral.

\begin{example}
\label{ex:cartclosespec}
Let $p$ be a prime and let $G=\Z_p^2$. Let $E_p=\set{0,1} \times \Z_p$. By Corollary \ref{cor:bdQsetprodspec} and Proposition \ref{prop:size2closespec},
$$\rho(E_p) \leq \rho(\set{0,1};\Z_p) \to 1$$
as $p \to \infty$.
So $E_p$ is close to being spectral for large $p$.
\end{example}

An interesting question is whether the inequality in Corollary \ref{cor:bdQcartesianset} can be strict.
In the special case where $E$ is spectral, this asks whether it is necessary that $E_1$ and $E_2$ are also spectral. This problem about spectral sets is open, although it has been proved true in special cases. These include the special cases in which one of the factors is an interval or convex polygon, due to results of Greenfeld and Lev \cite{GnL1,GnL3}, and the special case where one of the factors is a union of two intervals, due to a result of Kolountzakis \cite{Ko2}. For the above question about the inequality's strictness, however, the answer is false for all $\widetilde{Q}$ except perhaps $\widetilde{D}$, as illustrated by the following example.
We do not know the answer to this question for $\widetilde{Q}=\widetilde{D}$.

\begin{example}
Let $H=K=\Z_3$, $E_1=E_2 = \set{0,1} \subseteq \Z_3$, and $E=E_1 \times E_2$. It is easily checked that
$$\widetilde{L}(E_1)= 2,\quad
\widetilde{U}(E_1)= \frac{3}{2},\quad
\widetilde{\rho}(E_1)= 3,\quad
\widetilde{D}(E_1)= \frac{\sqrt{2}}{\sqrt[4]{3}},$$
while it can be checked by a computer (using e.g. our MATLAB program linked at the end of Section \ref{subsec:contributions}) that
$$\widetilde{L}(E) \doteq 3.490711985,\quad
\widetilde{U}(E) \doteq \frac{3}{2},\quad
\widetilde{\rho}(E) \doteq \paren{\frac{1+\sqrt{5}}{2}}^4,\quad
\widetilde{D}(E) \doteq \frac{2}{\sqrt{3}}.$$
Here, $\doteq$ means the equalities only hold numerically, i.e., up to small roundoff errors.
Thus, the inequality in Corollary \ref{cor:bdQcartesianset} can be strict for $E$ and all $\widetilde{Q}$ except perhaps $\widetilde{D}$.
\end{example}

Finally, we remark that Propositions \ref{prop:TEBtensor} and \ref{prop:Qcartesianpair} and Corollary \ref{cor:bdQcartesianset} can be easily generalized to the case of $n$-fold Cartesian products for any $n$.

\subsection{Multi-tiles}
\label{subsec:decomposeMultiTile}

We now investigate sets $E$ that are more general than Cartesian products where one factor is the full direct summand: multi-tiles by subgroups.

Our setup is the following.
Let $G$ be a finite abelian group, and $H \subseteq G$ be a subgroup.
Let $K:=G/H=\set{k_1,\dots,k_m}$.
Let $E \subseteq G$ multi-tile $G$ with partner $H$ at level $\ell$.
Let $F_i := E \cap k_i$.
Then this multi-tiling property is equivalent to $\abs{F_i}=\ell$ for every $i$.
Call $F_i$ the \emph{cross sections} of $E$ with respect to $H$.
For any choice of $g_i \in k_i$, we say that $F_i' := F_i-g_i \subseteq H$ are the \emph{translated $H$-cross sections} of $E$, which are unique up to $H$-translation. See Figure \ref{fig:multiTilSetup}.

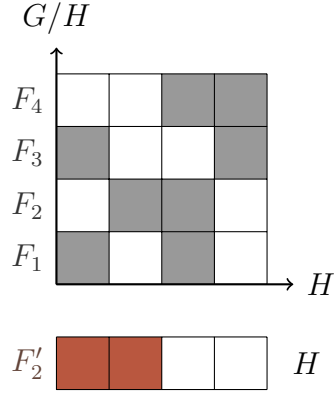
\begin{figure}
\begin{center}
\begin{tikzpicture}[scale=0.7]
\fill[gray!80!white] (0,0) rectangle (1,1);
\fill[gray!80!white] (2,0) rectangle (3,1);
\fill[gray!80!white] (1,1) rectangle (2,2);
\fill[gray!80!white] (2,1) rectangle (3,2);
\fill[gray!80!white] (0,2) rectangle (1,3);
\fill[gray!80!white] (3,2) rectangle (4,3);
\fill[gray!80!white] (2,3) rectangle (3,4);
\fill[gray!80!white] (3,3) rectangle (4,4);
\foreach \x in {1,2,3,4}
\node at (-0.55,\x-0.5) {\textcolor{gray!40!black}{$F_{\x}$}};
\draw (0,0) grid (4,4);
\draw[thick,->] (0,0) -- (4.5,0); \node[anchor=west] at (4.55,0) {$H$};
\draw[thick,->] (0,0) -- (0,4.5); \node[anchor=south] at (0,4.55) {$G/H$};
\fill[Mahogany!80!white] (0,-2) rectangle (1,-1);
\fill[Mahogany!80!white] (1,-2) rectangle (2,-1);
\node at (-0.55,-1.5) {\textcolor{Mahogany!40!black}{$F_2'$}};
\node at (4.75,-1.5) {$H$};
\draw (0,-2) grid (4,-1);
\end{tikzpicture}
\end{center}
\caption{A multi-tile as in our setup ($\ell=2$, $m=4$).}
\label{fig:multiTilSetup}
\end{figure}

In Theorem \ref{thm:decomposeMultiTile}, we show that such sets $E$ behave well spectrally compared to the least spectral $F_i'$ against a common basis partner. Compare with results of Section \ref{subsec:decomposeCartesian}.
Therefore, apart from Cartesian products, this provides us with another way to construct large sets that do not tile and are not far from being spectral.

The basis partner $B$ will be a ``Cartesian product'' constructed as follows. Let $B_H \subseteq \widehat{H}$ have $\ell$ elements. Under the canonical isomorphism $\widehat{H}\cong \widehat{G}/H^{\perp}$, we associate, to each $\phi\in B_H$, a (non-unique) element $\widetilde{\phi}\in \hat{g}H^{\perp}$, where $\hat{g}H^{\perp}$ is the coset corresponding to $\phi$ under the canonical isomorphism. Let
$$\widetilde{B_H}:=\{\widetilde{\phi}\}_{\phi\in B_H} \subseteq \widehat{G}.$$
Define
$$B:=\{\phi\psi: \phi \in \widetilde{B_H}, \psi \in H^\perp \}.$$
Then $(E,B)$ is an equal-size pair. See Figure \ref{fig:basisPartnerSetup}.

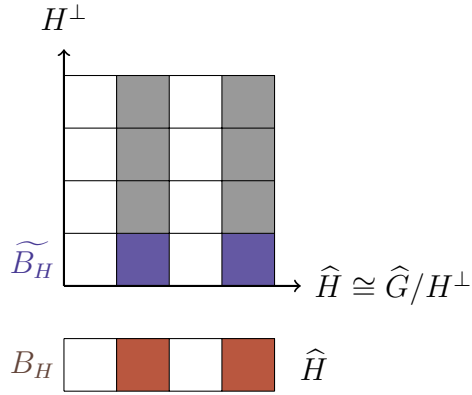
\begin{figure}
\begin{center}
\begin{tikzpicture}[scale=0.7]
\fill[BlueViolet!80!white] (1,0) rectangle (2,1);
\fill[BlueViolet!80!white] (3,0) rectangle (4,1);
\fill[gray!80!white] (1,1) rectangle (2,2);
\fill[gray!80!white] (3,1) rectangle (4,2);
\fill[gray!80!white] (1,2) rectangle (2,3);
\fill[gray!80!white] (3,2) rectangle (4,3);
\fill[gray!80!white] (1,3) rectangle (2,4);
\fill[gray!80!white] (3,3) rectangle (4,4);
\node at (-0.6,0.5) {\textcolor{BlueViolet}{$\widetilde{B_H}$}};
\draw (0,0) grid (4,4);
\draw[thick,->] (0,0) -- (4.5,0); \node[anchor=west] at (4.55,0) {$\widehat{H} \cong \widehat{G}/H^\perp$};
\draw[thick,->] (0,0) -- (0,4.5);
\node[anchor=south] at (0,4.65) {$H^\perp$};
\fill[Mahogany!80!white] (1,-2) rectangle (2,-1);
\fill[Mahogany!80!white] (3,-2) rectangle (4,-1);
\node at (-0.6,-1.5) {\textcolor{Mahogany!40!black}{$B_H$}};
\node at (4.75,-1.5) {$\widehat{H}$};
\draw (0,-2) grid (4,-1);
\end{tikzpicture}
\end{center}
\caption{A basis partner as in our setup for the set in Figure \ref{fig:multiTilSetup}.}
\label{fig:basisPartnerSetup}
\end{figure}

\begin{theorem}
\label{thm:decomposeMultiTile}
Let $E$ and $B$ be as above.
Then, for $\widetilde{Q} = \widetilde{L}$ or $\widetilde{Q} = \widetilde{U}$,
$$\widetilde{Q}_E(B) = \max_{1 \leq i \leq m} \widetilde{Q}_{F_i'}(B_H).$$
For $\rho$,
$$\rho_E(B) = \max_{1 \leq i \leq m} \widetilde{L}_{F_i'}(B_H)
\max_{1 \leq i \leq m} \widetilde{U}_{F_i'}(B_H)
\geq \max_{1 \leq i \leq m} \rho_{F_i'}(B_H).$$
For $\widetilde{D}$,
$$\widetilde{D}_E(B) = \paren{\prod_{i=1}^m \widetilde{D}_{F_i'}(B_H)}^{1/m}.$$
(Note by translation invariance that $\widetilde{Q}_{F_i'}(B_H)$ is well-defined since $F_i'$ is unique up to $H$-translation.)
\end{theorem}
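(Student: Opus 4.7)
The plan is to exhibit an explicit factorization of $T(E,B)$ that reduces its singular values to those of the matrices $T(F_i',B_H)$, whence all four identities fall out of Proposition \ref{prop:exprsingval}. To set up, I would fix representatives $g_i \in k_i$ so that $F_i = F_i' + g_i$, index the rows of $T(E,B)$ by pairs $(i,x)$ with $x \in F_i'$ and the columns by pairs $(\phi,\psi)$ with $\phi \in \widetilde{B_H}$ and $\psi \in H^\perp$, and use the fact that $\psi|_H \equiv 1$ to compute
\begin{equation*}
T(E,B)_{(i,x),(\phi,\psi)} = (\phi\psi)(x+g_i) = \phi(x)\,\phi(g_i)\,\psi(g_i).
\end{equation*}
Grouping rows by $i$ and columns by $\psi$ gives a block structure in which the $(i,\psi)$ block of size $\ell\times\ell$ is exactly $\psi(g_i)\,T(F_i',B_H)\,D_i$, where $D_i := \mathrm{diag}(\phi(g_i))_{\phi\in\widetilde{B_H}}$.

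Next I would repackage this as the matrix factorization
\begin{equation*}
T(E,B) = \mathrm{blockdiag}\bigl(T_1 D_1,\ldots,T_m D_m\bigr)\cdot\bigl(F \otimes I_\ell\bigr),
\end{equation*}
where $T_i := T(F_i',B_H)$ and $F$ is the $m\times m$ matrix with entries $F_{i,\psi} = \psi(g_i)$. Since $\psi(g_i)$ depends only on the coset $k_i \in K = G/H$, and $H^\perp$ is canonically $\widehat{K}$, the matrix $F$ is the character table of $K$; by orthogonality of characters, $FF^* = mI_m$, so $F/\sqrt{m}$ is unitary. Each $D_i$ is also unitary, so the singular values of $T(E,B)$ coincide, as a multiset, with $\sqrt{m}$ times the union $\bigsqcup_i \{\text{singular values of } T_i\}$ (counted with multiplicity).

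With this description in hand, the four identities reduce to routine algebra using $|E|=\ell m$. From $L_E(B) = m\min_i L_{F_i'}(B_H)$ and $U_E(B) = m\max_i U_{F_i'}(B_H)$, the definitions of $\widetilde{L}$ and $\widetilde{U}$ yield the stated max-formulas after the factors of $m$ and $\ell$ cancel; taking their quotient gives $\rho_E(B) = (\max_i U_i)/(\min_i L_i)$, which equals the claimed product since $\max_i \widetilde{L}_{F_i'}(B_H)\cdot \max_i \widetilde{U}_{F_i'}(B_H) = (\ell/\min_i L_i)(\max_i U_i/\ell)$, and dominates $\max_i \rho_{F_i'}(B_H)$ because $U_i/L_i \leq (\max U)/(\min L)$ for each $i$. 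For $\widetilde{D}$, the determinant product $|\det T(E,B)| = m^{\ell m/2}\prod_i D_{F_i'}(B_H)$ plugged into $\widetilde{D}_E(B) = \sqrt{n}/D_E(B)^{1/n}$ gives the geometric-mean identity after the $\sqrt{m}$ factors cancel cleanly.

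I expect the main obstacle to be bookkeeping: verifying the block factorization, ensuring that the characters $\psi \in H^\perp$ really do restrict trivially to $H$ (so the $\phi$ and $\psi$ contributions factor as claimed), and confirming that $F$ is normalized so that $F/\sqrt{m}$ is unitary. Once the factorization is correctly in hand, the rest is standard singular-value accounting for block-diagonal matrices composed with unitaries.
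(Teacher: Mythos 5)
Your proposal is correct and follows essentially the same route as the paper: both factor $T(E,B)$ into a block-diagonal matrix built from the $T(F_i',B_H)$ (composed with diagonal unitaries coming from the translations $g_i$) times a Kronecker factor given by the character table of $K=G/H$, and then read off all four identities from the resulting singular values. If anything, your accounting is the more careful one, since you correctly track that the character-table factor is $\sqrt{m}$ times a unitary --- a normalization the paper's own proof glosses over but which cancels in the normalized quantities exactly as you compute.
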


\begin{proof}
Write $H^\perp = \{\hat{k}_1,\hat{k}_2,\dots,\hat{k}_m\}$.
With an appropriate reordering of rows and columns, $T(E,B)$ is an $m \times m$ block matrix of $\ell \times \ell$ blocks, where the $(i,j)$ block corresponds to $\{\widetilde{\phi}\cdot\hat{k}_j\}_{\phi\in B_{H}}$ acting on $F_i$.
So the $(i,j)$ block of $T(E,B)$ is $T(F_i,\widetilde{B_H})\hat{k}_j(g_i)$, since $\hat{k}_j \in H^\perp$ and $F_i \subseteq H + g_i$.
Thus, we can factor
\begin{multline*}
T(E,B) = \begin{pmatrix}
T(F_1,\widetilde{B_H}) \\
& T(F_2,\widetilde{B_H}) \\
& & \ddots \\
& & & T(F_m,\widetilde{B_H})\end{pmatrix} \cdot \\
\begin{pmatrix}
\hat{k}_1(g_1)I_\ell & \hat{k}_2(g_1)I_\ell & \dots & \hat{k}_m(g_1)I_\ell \\
\hat{k}_1(g_2)I_\ell & \hat{k}_2(g_2)I_\ell & \dots & \hat{k}_m(g_2)I_\ell \\
\vdots & \vdots & \ddots & \vdots \\
\hat{k}_1(g_m)I_\ell & \hat{k}_2(g_m)I_\ell & \dots & \hat{k}_m(g_m)I_\ell
\end{pmatrix}.
\end{multline*}
Call the first factor $T_1$ and the second factor $T_2$.

For the analysis of $T_1$, observe that $T_{1}$'s singular values are the union of the singular values of the $T(F_i,\widetilde{B_H})$, counting multiplicities. By translation invariance, $T(F_i,\widetilde{B_H})$ has the same singular values as $T(F_i',\widetilde{B_H})$, which is equal to $T(F_i',B_H)$ by construction. Hence
\[
\norm{T_1} =  \max_{1\leq i \leq m} \norm{T(F_i',B_H)},\quad \norm{T_1^{-1}} =  \max_{1\leq i \leq m} \norm{T(F_i',B_H)^{-1}},\quad
\abs{\det T_1} = \prod_{i=1}^m \abs{\det T(F_i',B_H)}.
\]

Now, up to a permutation of rows and columns,
$T_2$ is an $\ell \times \ell$ block diagonal matrix with blocks of size $m\times m$.
Specifically, arrange the rows and columns in the order
\[
1,\ell+1,\dots,(m-1)\ell+1, \hskip 0.5em 2,\ell+2,\dots,(m-1)\ell+2,\hskip 0.5em \dots,\hskip 0.5em \ell,2\ell,\dots,m\ell.
\]
Then all diagonal blocks equal $T(\set{g_i}_{i=1}^m,H^\perp)=[\hat{k}_j(g_i)]_{ij}$.
Under the canonical isomorphism $\widehat{G/H} \cong H^\perp$, each $g_i$ representing an element of $G/H$, this matrix can be thought of as $T(K,\widehat{K})$, i.e. it is unitary.
Hence $T_2$ is unitary.

We conclude that
$\norm{T} = \norm{T_1}$, $\norm{T^{-1}} = \norm{T_1^{-1}}$,
and $\abs{\det T} = \abs{\det T_1}$.
These together with the basic characterizations of tightness quantities (Prop. \ref{prop:exprrieszconst}) imply the desired result.
\end{proof}

\begin{remark}
In his proof that a bounded set $\Omega$ that multi-tiles $\mathbb{R}^{d}$ by a lattice $\Lambda$ has the property that $L^{2}(\Omega)$ has a Riesz exponential basis, Kolountzakis \cite{Ko} gives the factorization
\[
\left[e\big(a_{j}\cdot (x-\lambda_{r}(x))\big)\right]_{1\leq j,r\leq k}
=
\left[e\big(-a_{j}\cdot\lambda_{r}(x)\big)\right]_{1\leq j, r\leq k}
\text{diag}[e(a_{1}\cdot x),\dots, e(a_{k}\cdot x)]
\]
(cf. Equation (14) in his proof of Lemma 2). This is analogous to our factorization of $T(E,B)$ in the proof of Theorem \ref{thm:decomposeMultiTile} into $T_1$ and $T_2$. In fact, quantitatively calculating the constant $C_2$ of Kolountzakis's Lemma 2 yields
\[
C_{2}=kA_{2}=k\cdot\max_{\text{distinct }N(x)}\| N(x)^{-1}\|,
\]
analogous to our above calculation $\| T_{1}^{-1}\| = \max_{1\leq i\leq m}\|T(F_i', B_H)\|$.
A similar remark applies for his constant $C_1$.
\end{remark}

\begin{remark}
\label{rem:generalizeThmDecomposeMultiTile}
In fact, Theorem \ref{thm:decomposeMultiTile} can be generalized a bit further. Consider the set $E$ that is a ``partial multi-tile'' by $H$. Specifically, consider as $E$ some union of $m' \leq m$ cross-sections $F_i$ as above, and consider a set $B_K \subseteq \widehat{K}$ of size $m'$. Define $\widetilde{B_K} \subseteq H^\perp$ of size $m'$ corresponding to $B_K$ via the canonical isomorphism $\widehat{K} \cong H^\perp$. Then define $B := \widetilde{B_H} \cdot \widetilde{B_K}$ as before.

We can then factor $T(E,B) = T_1 T_2$ as in the above proof, with $T_2$ being diagonal with identical blocks $T(\pi(E), B_K)$, where $\pi:G \to G/H$ is the canonical projection. Even though $T_2$ is not always unitary, we can still obtain results using $$\norm{T}\leq\norm{T_1}\norm{T_2},\quad \norm{T^{-1}}\leq\norm{T_1^{-1}}\norm{T_2^{-1}}, \quad \det T = \det T_1 \det T_2.$$
So we can relate $\widetilde{Q}_E(B)$ to $\widetilde{Q}_{F_i'}(B_H)$ and $\widetilde{Q}_{\pi(E)}(B_K)$.

We will not use this result in our main Theorem \ref{thm:mainUpperBdLevelComplexity}, but we think it is nice to note that this generalization exists.
\end{remark}
We now consider the special case where all $F_i$ are translates of one another.
In this case, Theorem \ref{thm:decomposeMultiTile} simplifies as follows.

\begin{cor}
\label{cor:decomposeSumWithTile}
Let $(F,B_H) \subseteq H \times \widehat{H}$ be an equal-size pair.
Let $T \subseteq G$ tile $G$ by $H$ (at level $1$).
Let $E := F+T$, and $B:=B_H \times H^\perp$. Then for any normalized tightness quantity $\widetilde{Q}$,
$$\widetilde{Q}_E(B) = \widetilde{Q}_F(B_H).$$
\end{cor}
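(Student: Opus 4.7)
The plan is to observe that this corollary is exactly the specialization of Theorem \ref{thm:decomposeMultiTile} in which every translated $H$-cross section of $E$ is the same set, namely $F$, and then to read off each normalized tightness quantity from that theorem's formulas.

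First I would set up the multi-tile data. Because $T$ tiles $G$ by $H$ at level $1$, $T$ is a complete system of coset representatives for $H$ in $G$. Writing $G/H=\{k_1,\dots,k_m\}$ with $t_i\in T\cap k_i$ and using $F\subseteq H$, each piece $F+t_i$ lies in the coset $k_i$ and the pieces are pairwise disjoint, so $E=\bigsqcup_{i=1}^m(F+t_i)$ multi-tiles $G$ by $H$ at level $\ell=|F|$. Choosing $g_i=t_i$ as the coset representatives, the translated $H$-cross sections are $F_i'=F$ for every $i$. An equal-size check $|E|=|F|\cdot m=|B_H|\cdot|H^\perp|=|B|$ confirms that $(E,B)$ is an equal-size pair, and matching the Cartesian-style notation $B=B_H\times H^\perp$ with the $\widetilde{B_H}\cdot H^\perp$ of the theorem amounts to lifting elements of $B_H\subseteq\widehat{H}$ to $\widehat{G}$ under $\widehat{G}/H^\perp\cong\widehat{H}$.

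Next I would substitute $F_i'=F$ for all $i$ into Theorem \ref{thm:decomposeMultiTile}. For $\widetilde{Q}\in\{\widetilde{L},\widetilde{U}\}$ the maxima collapse immediately to $\widetilde{Q}_F(B_H)$, and for $\widetilde{D}$ the geometric mean of $m$ equal factors is again $\widetilde{D}_F(B_H)$. The only mildly awkward case is $\widetilde\rho=\rho$, because the theorem's formula reads $\rho_E(B)=\max_i\widetilde{L}_{F_i'}(B_H)\cdot\max_i\widetilde{U}_{F_i'}(B_H)$ rather than a max of $\rho_{F_i'}(B_H)$. In our situation this expression becomes $\widetilde{L}_F(B_H)\cdot\widetilde{U}_F(B_H)$, and the algebraic identity $\widetilde{L}_F(B_H)\,\widetilde{U}_F(B_H)=\tfrac{|F|}{L_F(B_H)}\cdot\tfrac{U_F(B_H)}{|F|}=\rho_F(B_H)$ closes the $\rho$ case.

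The argument is essentially bookkeeping, with translational invariance (Prop. \ref{prop:invtrans}) ensuring that the choice $g_i=t_i$ of coset representatives does not affect any $\widetilde{Q}_{F_i'}(B_H)$. The only genuine point to watch is the $\rho$-formula, which is not a priori a max over cross sections in the theorem but reduces to a single $\rho_F(B_H)$ once all $F_i'$ coincide; this is the mildest of obstacles rather than a real one.
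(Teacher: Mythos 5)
Your proposal is correct and follows exactly the route the paper intends: specialize Theorem \ref{thm:decomposeMultiTile} to the case where every translated $H$-cross section equals $F$ (so the maxima and the geometric mean collapse), and close the $\rho$ case via the identity $\widetilde{L}_F(B_H)\,\widetilde{U}_F(B_H)=\rho_F(B_H)$. The paper states the corollary without writing out this bookkeeping, and your write-up supplies precisely the missing details, including the correct identification of the multi-tiling level $\ell=|F|$ and the choice $g_i=t_i$.
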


This result generalizes Proposition \ref{prop:Qcartesianpair} in the case that $(E_2,B_2) = (K,\widehat{K})$.

\begin{remark}
Using the result in Remark \ref{rem:generalizeThmDecomposeMultiTile}, we can generalize Corollary \ref{cor:decomposeSumWithTile} to the case that $T$ is a ``partial tile'' by $H$, that is, $T$ has at most 1 element in each coset of $H$. However, the results obtained weaken from equalities to inequalities.
\end{remark}

As an application, we will obtain the following known result.

\begin{prop}
\label{prop:TilesAreSpec}
Let $H \subseteq G$ be a subgroup.
Then any set $E\subseteq G$ that tiles with tiling partner $H$ is spectral.
\end{prop}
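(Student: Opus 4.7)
The plan is to deduce this proposition as the degenerate $\ell=1$ case of Theorem \ref{thm:decomposeMultiTile} (or, equivalently, the $F=\{0\}$ case of Corollary \ref{cor:decomposeSumWithTile}). The key observation is that tiling $G$ with $H$ as the translation set is precisely the statement that $E$ multi-tiles $G$ by $H$ at level $\ell=1$, which forces every cross section to be a single point and hence trivially spectral.

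First I would set up the notation of Theorem \ref{thm:decomposeMultiTile}. Write $K=G/H=\{k_1,\dots,k_m\}$ with $m=[G:H]$, and let $F_i=E\cap k_i$. Since $E$ tiles $G$ with partner $H$, every coset of $H$ contains exactly one element of $E$, so $\abs{F_i}=1$ for each $i$. Choosing any representatives $g_i\in k_i$, the translated cross sections $F_i'=F_i-g_i$ are singletons in $H$.

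Next I would pick the basis data. Take $B_H=\{\mathbf{1}_H\}\subseteq\widehat H$ (the trivial character), lift it to $\widetilde{B_H}=\{\mathbf{1}_G\}\subseteq\widehat G$, and set
\[
B=\widetilde{B_H}\cdot H^\perp = H^\perp,
\]
which has $\abs{H^\perp}=[G:H]=\abs{E}$, so $(E,B)$ is an equal-size pair. Because $\abs{F_i'}=1=\abs{B_H}$, the Fourier matrix $T(F_i',B_H)$ is a $1\times1$ matrix with an entry of modulus $1$, so $(F_i',B_H)$ is automatically a spectral pair and $\widetilde L_{F_i'}(B_H)=\widetilde U_{F_i'}(B_H)=1$. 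Applying Theorem \ref{thm:decomposeMultiTile} then gives
\[
\widetilde L_E(B)=\max_i\widetilde L_{F_i'}(B_H)=1
\quad\text{and}\quad
\widetilde U_E(B)=\max_i\widetilde U_{F_i'}(B_H)=1,
\]
so by the spectral-pair characterization in Proposition \ref{prop:basicspectral}, $(E,H^\perp)$ is a spectral pair. Therefore $E$ is spectral, with spectrum $H^\perp$.

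I do not expect any real obstacle; the whole content has been packed into Theorem \ref{thm:decomposeMultiTile} already. The only point that needs a moment of care is the identification $B_H\cdot H^\perp \cong H^\perp$ under the canonical isomorphism $\widehat G/H^\perp\cong\widehat H$, which is exactly the identification made in the setup preceding Theorem \ref{thm:decomposeMultiTile}, so no new work is required.
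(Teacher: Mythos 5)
Your proof is correct and is essentially the paper's own argument: the paper deduces the proposition from Corollary \ref{cor:decomposeSumWithTile} by writing $E=\set{0}+E$ with $F=\set{0}$ trivially spectral, which is exactly the $\ell=1$ instance of Theorem \ref{thm:decomposeMultiTile} that you work out directly. The only difference is that you invoke the theorem rather than its corollary; the content is identical.
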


\begin{proof}
By decomposing $E=\set{0}+E$ and using the fact that $\set{0}$ is trivially spectral, the result follows from Corollary \ref{cor:decomposeSumWithTile}.
\end{proof}

As a second application, we will
construct in Example \ref{ex:multiTileNearlySpec} large sets that do not tile and are also not Cartesian products, but are nearly spectral.
Compare with Example \ref{ex:cartclosespec}.

\begin{example}
\label{ex:multiTileNearlySpec}
Let $p \geq 3$ be a prime and let $G=\Z_p^2$.
Let
$$E_p=\paren{\set{0,1} \times (\Z_p \setminus \set{0})} \cup \set{(1,0),(2,0)}.$$
By Corollary \ref{cor:decomposeSumWithTile} and Proposition \ref{prop:size2closespec},
$$\rho(E_p) \leq \rho(\set{0,1};\Z_p) \to 1$$
as $p \to \infty$.
So $E_p$ is nearly spectral for large $p$.
\end{example}

\subsection{An exact result for Cartesian products}
\label{subsec:decomposeExact}

Let $G=H \oplus K$. In this subsection, we study sets $E$ of the form $E_1 \times K$ where $E_1 \subseteq H$.
We know that such a set $E$ is not further from being spectral than $E_1$: by Corollary \ref{cor:bdQsetprodspec}, $\widetilde{Q}(E) \leq \widetilde{Q}(E_1)$ for any normalized tightness quantity $\widetilde{Q}$.
We now prove the surprising converse: $\widetilde{Q}(E)=\widetilde{Q}(E_1)$, that is, they are exactly as far from being spectral as one another (Thm. \ref{thm:dimexpand}).

Results of this type are important for the following reason.
The product question for spectral sets asks: if $E_1 \times E_2$ is spectral, are $E_1$ and $E_2$ necessarily spectral?
This question is still open in general.
Our Theorem \ref{thm:dimexpand} implies that for any $E_1 \subseteq H$, $E_1$ is spectral if and only if $E_1 \times K$ is spectral.
Therefore, in the case of finite abelian groups, that proposition resolves this question when one factor is the whole group $K$.
In the case of $\R^d$, a similar result was obtained by Greenfeld and Lev \cite{GL}.
Specifically, they proved that if $I \subseteq \R$ is an interval and $\Sigma \subseteq \R^{d-1}$, then $\Sigma$ is spectral if and only if $I \times \Sigma$ is spectral.

Recall that we have the isomorphism $\widehat{G} \cong \widehat{H} \oplus \widehat{K}$, so we can think of $\widehat{H}$ as a direct summand of $\widehat{G}$.
The strategy for proving our result is as follows.
Let $E=E_1 \times K$, $E_1 \subseteq H$.
We first show that for any $B \subseteq \widehat{G}$ such that $(E,B)$ is a basis pair, $B$ has the same number of elements in each coset of $\widehat{H}$ (Prop. \ref{prop:basisequidistr}).
This allows us to apply Theorem \ref{thm:decomposeMultiTile} to the pair $(B,E)$; note that $E$ and $B$ appear here in the reverse of the usual order.
The conclusion is that the intersection of $B$ with some coset of $\widehat{H}$, properly translated, is a spectrally good pairing for $E_1$.
From this, we can obtain the desired result.

First, we prove the following Proposition \ref{prop:basisequidistr} on the structure of $B$ for which $(E,B)$ is a basis pair.

\begin{prop}
\label{prop:basisequidistr}
Let $E_1 \subseteq H$ and $E=E_1 \times K$.
Let $(E,B) \subseteq G \times \widehat{G}$ be a basis pair.
Then $B$ has exactly $\abs{E_1}$ elements in each coset of $\widehat{H}$.
\end{prop}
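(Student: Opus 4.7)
The plan is to exhibit an orthogonal decomposition of $L^2(E)$ indexed by cosets of $\widehat{H}$ in $\widehat{G}$, show that each $b \in \widehat{G}$ contributes only to the summand corresponding to its own coset, and then conclude by a dimension/pigeonhole argument.

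Writing $\widehat{G}\cong \widehat{H}\oplus\widehat{K}$ canonically, each $b\in\widehat{G}$ has the form $b=(b_H,b_K)$ with $b_H\in\widehat{H}$ and $b_K\in\widehat{K}$. Two characters lie in the same coset of $\widehat{H}$ in $\widehat{G}$ if and only if they have the same $\widehat{K}$-component. For each $\beta\in\widehat{K}$, I define the map $\Phi_\beta\colon L^2(E_1)\to L^2(E)$ by $\Phi_\beta(g)(h,k)=g(h)\beta(k)$, set $V_\beta:=\Phi_\beta(L^2(E_1))$, and observe that $V_\beta$ is precisely the span of $\{b|_E : b\in\widehat{G},\ b_K=\beta\}$. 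The first step will be to verify two elementary facts: (i) $\dim V_\beta=|E_1|$, because $\Phi_\beta$ is injective (as $|\beta|\equiv 1$ on $K$) and because the restrictions $\{b_H|_{E_1}:b_H\in\widehat{H}\}$ span all of $L^2(E_1)$; (ii) the subspaces $V_{\beta_1}$ and $V_{\beta_2}$ are orthogonal whenever $\beta_1\neq\beta_2$, by a direct product-of-sums computation that factors through the orthogonality $\langle\beta_1,\beta_2\rangle_{L^2(K)}=0$.

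Combining (i) and (ii) with $|\widehat{K}|=|K|$ gives an orthogonal decomposition
\[
L^2(E)=\bigoplus_{\beta\in\widehat{K}} V_\beta,\qquad \dim V_\beta=|E_1|.
\]
Now suppose $(E,B)$ is a basis pair, and for each $\beta\in\widehat{K}$ let $B_\beta$ denote the elements of $B$ whose $\widehat{K}$-component equals $\beta$; equivalently, $B_\beta$ is the intersection of $B$ with the coset of $\widehat{H}$ corresponding to $\beta$. Each restriction $b|_E$ lies in a single summand $V_{\beta_b}$. Since $\{b|_E:b\in B\}$ spans $L^2(E)$ and the summands are orthogonal, the family $\{b|_E:b\in B_\beta\}$ must span $V_\beta$ for every $\beta$. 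This forces $|B_\beta|\geq\dim V_\beta=|E_1|$ for all $\beta\in\widehat{K}$.

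Finally, summing yields $|B|=\sum_\beta |B_\beta|\geq |E_1|\cdot|\widehat{K}|=|E_1|\cdot|K|=|E|=|B|$, so every inequality is an equality and $|B_\beta|=|E_1|$ for each coset, as claimed. The only real content is the orthogonal decomposition; the potential snag is justifying (i), i.e.\ that $\widehat{H}$ restricted to $E_1\subseteq H$ actually spans $L^2(E_1)$, which follows from the fact that any function on $E_1$ extends by zero to $H$ and can then be expanded in the orthogonal basis $\widehat{H}$ of $L^2(H)$.
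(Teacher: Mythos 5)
Your proof is correct, but it runs in the opposite direction from the paper's. The paper argues by contradiction via an \emph{upper} bound: if some coset of $\widehat{H}$ contained $m+1 = \abs{E_1}+1$ elements $\hat{h}_i+\hat{k}$ of $B$, then since any $m+1$ functions on the $m$-point set $E_1$ are linearly dependent, one finds $c_i$ with $\sum_i c_i\hat{h}_i \equiv 0$ on $E_1$, and the common factor $\hat{k}(k)$ makes $\sum_i c_i(\hat{h}_i+\hat{k}) \equiv 0$ on all of $E_1\times K$, contradicting linear independence; counting then upgrades ``at most $\abs{E_1}$'' to ``exactly.'' You instead establish the \emph{lower} bound $\abs{B_\beta}\geq\abs{E_1}$ directly, by exhibiting the orthogonal decomposition $L^2(E)=\bigoplus_{\beta\in\widehat{K}}V_\beta$ and noting that a spanning set must project onto a spanning set of each summand; the same counting argument then closes both proofs. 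The trade-off is worth noting: the paper's dependence computation uses nothing about the second factor beyond $\abs{E}=\abs{E_1}\,\abs{K}$, which is why (as the remark following the proposition points out) it still yields ``at most $\abs{E_1}$ per coset'' for a general product $E_1\times E_2$ with $E_2\subsetneq K$; your orthogonality of $V_{\beta_1}$ and $V_{\beta_2}$ relies on the character orthogonality $\sum_{k\in K}\beta_1(k)\overline{\beta_2(k)}=0$ over the \emph{full} group $K$, so it does not degrade gracefully to that setting. In compensation, your argument produces the structural decomposition itself (essentially the unitarity of the second factor $T_2$ in the proof of Theorem \ref{thm:decomposeMultiTile}, viewed from the dual side), which is more than the proposition asks for. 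Both proofs are complete and elementary.
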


\begin{proof}
The proof is in the spirit of the proof of Proposition \ref{prop:almostinsubgp}.
Suppose otherwise. Let $\abs{E_1}=m$.
Then $B$ has at least $m+1$ elements in some coset
$\widehat{H}+\hat{k}$, $\hat{k} \in \widehat{K}$.
Let these elements be $\hat{h}_i+\hat{k}$, $1 \leq i \leq m+1$.
We claim that these elements are linearly dependent on $E$.

Because $\abs{E_1}=m$, there are $c_i \in \C$ such that
$\sum_{i=1}^{m+1} c_i \hat{h}_i(h) = 0$ for all $h \in E_1$.
So for any $(h,k) \in E_1 \times K$,
$$\sum_{i=1}^{m+1} c_i (\hat{h}_i+\hat{k})(h,k)
= \left(\sum_{i=1}^{m+1} c_i \hat{h}_i(h)\right)\hat{k}(k) = 0.$$
Thus, $\{\hat{h}_i+\hat{k}\}_{i=1}^{m+1}$ are linearly dependent on $E$, so that $B$ is not a basis for $E$.
This contradicts the hypothesis that $(E,B)$ is a basis pair.
\end{proof}

\begin{remark}
For a general $E=E_1 \times E_2$, the same argument as in the proof of Proposition \ref{prop:basisequidistr} implies that for any basis pair $(E,B)$, $B$ has at most $\abs{E_1}$ elements in each coset of $\widehat{H}$.
But if $E_2 \neq K$, ``at most'' here cannot be improved to ``exactly.''
\end{remark}

\begin{theorem}
\label{thm:dimexpand}
Let $E_1 \subseteq H$ and $E=E_1 \times K$. Then, for any normalized tightness quantity $\widetilde{Q}$,
$\widetilde{Q}(E)=\widetilde{Q}(E_1)$.
\end{theorem}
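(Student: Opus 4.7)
The plan is to establish the reverse of the inequality $\widetilde{Q}(E) \le \widetilde{Q}(E_1)$, which we already have from Corollary \ref{cor:bdQsetprodspec}. It suffices to show that $\widetilde{Q}_E(B) \ge \widetilde{Q}(E_1)$ for every $B \subseteq \widehat{G}$ such that $(E,B)$ is a basis pair, and then take the infimum over $B$.

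The central idea is to set up a \emph{dual} application of Theorem \ref{thm:decomposeMultiTile}: $B$ will play the role of the multi-tile and $E$ will play the role of the structured basis partner. Two observations make this possible. First, by Proposition \ref{prop:basisequidistr}, $B$ has exactly $|E_1|$ elements in each coset of $\widehat{H} \subseteq \widehat{G}$, so $B$ multi-tiles $\widehat{G}$ by $\widehat{H}$ at level $|E_1|$. Second, Pontryagin duality gives $\widehat{H}^\perp = K$ inside $G$, so $E = E_1 \times K = E_1 + K$ already has the required Minkowski-sum form (a lift of $E_1$ from $\widehat{\widehat{H}} \cong H$, plus $\widehat{H}^\perp$).

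Applying Theorem \ref{thm:decomposeMultiTile} with $(B, E, E_1)$ in the roles of $(E, B, B_H)$, and then invoking the duality of tightness quantities (Prop. \ref{prop:duality}) in the form $\widetilde{Q}_B(E) = \widetilde{Q}_E(B)$ and $\widetilde{Q}_{F_i'}(E_1) = \widetilde{Q}_{E_1}(F_i')$, I would obtain, for the translated $\widehat{H}$-cross sections $F_1',\dots,F_m' \subseteq \widehat{H}$ of $B$, the identities $\widetilde{L}_E(B) = \max_i \widetilde{L}_{E_1}(F_i')$, $\widetilde{U}_E(B) = \max_i \widetilde{U}_{E_1}(F_i')$, $\widetilde{D}_E(B) = \bigl(\prod_i \widetilde{D}_{E_1}(F_i')\bigr)^{1/m}$, and the lower bound $\rho_E(B) \ge \max_i \rho_{E_1}(F_i')$. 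Since $(E,B)$ is a basis pair, $\widetilde{L}_E(B)$ is finite, which forces each $\widetilde{L}_{E_1}(F_i')$ to be finite; hence each $(E_1, F_i')$ is itself a basis pair, and so $\widetilde{Q}_{E_1}(F_i') \ge \widetilde{Q}(E_1)$ for every $i$.

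In each of the four cases (max, max, geometric mean, max-product lower bound), the resulting inequality immediately gives $\widetilde{Q}_E(B) \ge \widetilde{Q}(E_1)$, and taking the minimum over $B$ completes the argument. The main obstacle is conceptual rather than computational: one must recognize that the Cartesian-product hypothesis $E = E_1 \times K$ is \emph{precisely} what makes $E$ the ``structured'' partner in the dual of Theorem \ref{thm:decomposeMultiTile}, so that the decomposition formula can be read as a lower bound on $\widetilde{Q}_E(B)$ in terms of basis pairs for $E_1$. Once this duality is in place, the remainder is bookkeeping plus the tautology $\widetilde{Q}_{E_1}(F_i') \ge \widetilde{Q}(E_1)$.
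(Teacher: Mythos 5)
Your proposal is correct and follows essentially the same route as the paper: one inequality from Corollary \ref{cor:bdQcartesianset}/\ref{cor:bdQsetprodspec}, then Proposition \ref{prop:basisequidistr} to see that any basis partner $B$ is equidistributed over the cosets of $\widehat{H}$, followed by applying Theorem \ref{thm:decomposeMultiTile} with the roles of $E$ and $B$ reversed and invoking duality (Prop. \ref{prop:duality}). Your case-by-case reading of the decomposition formulas (max, max, geometric mean, $\geq\max$) is a slightly more explicit version of the paper's statement that $\widetilde{Q}_B(E)\geq\min_i\widetilde{Q}_{B_i}(E_1)$, but the argument is the same.
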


\begin{proof}
By Corollary \ref{cor:bdQsetprodspec}, $\widetilde{Q}(E) \leq \widetilde{Q}(E_1)$.
It remains to prove the reverse inequality.
Let $B \subseteq \widehat{G}$ be such that $\widetilde{Q}(E)=\widetilde{Q}_E(B)$.
Our goal is to construct $B_H \subseteq \widehat{H}$ such that
$\widetilde{Q}(E) \geq \widetilde{Q}_{E_1}(B_H)$, which will imply that $\widetilde{Q}(E) \geq \widetilde{Q}(E_1)$.

Observe that $(E,B)$ is a basis pair. Let $\abs{E_1}=\ell$. By Proposition \ref{prop:basisequidistr}, $B$ has exactly $\ell$ elements in each coset of $\widehat{H}$.
Let $\widehat{K}=\{\hat{k}_1,\dots,\hat{k}_m\}$, and let
$B_i = (B-\hat{k}_i) \cap \widehat{H} \subseteq \widehat{H}$.
We can now apply Theorem \ref{thm:decomposeMultiTile}, where the roles of $E$ and $B$ here are switched from that theorem.
We conclude that
$$\widetilde{Q}_B(E) \geq \min_{1 \leq i \leq m} \widetilde{Q}_{B_i}(E_1).$$
(For $\widetilde{Q} \neq \widetilde{D}$, we can strengthen the conclusion by replacing the minimum on the right-hand side with the maximum.)
So there is some $1 \leq i \leq m$ for which $\widetilde{Q}_B(E) \geq \widetilde{Q}_{B_i}(E_1)$.
By duality (Prop. \ref{prop:duality}), $\widetilde{Q}_E(B) \geq \widetilde{Q}_{E_1}(B_i)$.
This yields $\widetilde{Q}(E) \geq \widetilde{Q}(E_1)$, as desired.
\end{proof}

\begin{remark}
For a general $E=E_1 \times E_2$ where $E_2 \subseteq K$ is not necessarily $K$, the proof of Theorem \ref{thm:dimexpand} does not apply, because there is no analogue of Proposition \ref{prop:basisequidistr}.
\end{remark}

\section{Multi-tiling and geometric complexity}
\label{sec:MultiTileGeometricComplex}

In this section, we finally prove our main Theorem \ref{thm:mainUpperBdLevelComplexity} relating multi-tiling level and geometric complexity of a set to its tightness quantities. We start with a brief discussion of simultaneous bases in Section \ref{subsec:simulBasis} before heading to the main result in Section \ref{subsec:mainResult}.

\subsection{Simultaneous basis}
\label{subsec:simulBasis}

Let $G$ be a finite abelian group.
The concept of simultaneous bases for families of subsets of $G$, defined below, is crucial to our main Theorem \ref{thm:mainUpperBdLevelComplexity}.

\begin{definition}
For a family of subsets $E_1,E_2,\dots,E_m \subseteq G$ of equal size, we say that $B \subseteq \widehat{G}$ is a \emph{simultaneous basis} for this family if $(E_i,B)$ is a basis pair for all $i$.
\end{definition}

By Theorem \ref{thm:introRieszExist}, any family of one subset has a simultaneous basis.

The following example shows that not every family of subsets of $G$ of equal size has a simultaneous basis.
The $\Z_2^2$ case is due to Kolountzakis \cite{KoP}.

\begin{example}
\label{ex:nosimulbasis}
Consider the three subsets of $\Z_2^2$
$$E_1=\set{(0,0),(0,1)}, \quad E_2 = \set{(0,0),(1,0)},\quad E_3 = \set{(0,0),(1,1)}.$$
Note that these sets are subspaces of $\Z_2^2$.
Suppose that $B$ is a simultaneous basis for all the $E_i$.
By translational invariance (Prop. \ref{prop:invtrans}), assume that $B=\set{(0,0),b}$ for some $b \in \Z_2^2$.
By Proposition \ref{prop:almostinsubgp} in the case $\ell=0$, $b$ cannot be in $E_i^\perp$ for any $i$.
But $\cup_{i=1}^3 E_i^\perp = \Z_2^2$, a contradiction. So the $E_i$ do not have a simultaneous basis.

In general, let $p$ be a prime and consider $G=\Z_p^2$.
Let
$$d_1=(1,0),\ \ d_2 = (0,1),\ \ d_3=(1,1),\ \ d_4 = (1,2), \ \ \dots,\ \ d_{p+1}=(1,p-1)$$
be all $p+1$ directions in $G$.
For $1\leq i\leq m+1$, set $E_i=\set{nd_i: 0 \leq n \leq p-1}$.
Since $\cup_{i=1}^{p+1} E_i^\perp = G$, by an argument analogous to above, the $E_i$ do not have a simultaneous basis.
\end{example}

We can ask: under what circumstances is a simultaneous basis guaranteed to exist for a family of subsets of $G$ of equal size?
Kolountzakis \cite{KoP} observed that if the group $G$ is cyclic, then such existence is guaranteed, as the proposition below shows.

\begin{prop}
\label{prop:cyclicsimulbasis}
Let $G=\Z_m$ for some $m \geq 1$. Then any family of subsets of $G$ of equal size has a simultaneous basis.
\end{prop}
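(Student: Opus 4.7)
The plan is to give a single, universal choice of $B \subseteq \widehat{\mathbb{Z}_m}$ that pairs with every $n$-element subset of $\mathbb{Z}_m$ to form a basis pair, where $n$ is the common size of the sets in the family. Under the identification $\widehat{\mathbb{Z}_m} \cong \mathbb{Z}_m$, I would simply take
\[
B = \{0, 1, 2, \ldots, n-1\} \subseteq \mathbb{Z}_m.
\]
Note this set has $n$ elements because $n \leq m$ (as $E \subseteq \mathbb{Z}_m$ forces $|E| \leq m$).

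To verify this works, I would fix an arbitrary $n$-element subset $E = \{e_1, \ldots, e_n\} \subseteq \mathbb{Z}_m$ from the family and examine the Fourier matrix. Its $(j, k+1)$-entry, for $1 \leq j \leq n$ and $0 \leq k \leq n-1$, is $\chi(e_j \cdot k / m) = \omega_j^k$, where $\omega_j := \chi(e_j / m)$. Thus $T(E, B)$ is a Vandermonde matrix in $\omega_1, \ldots, \omega_n$, and its determinant equals $\prod_{1 \leq j < j' \leq n}(\omega_{j'} - \omega_j)$. Since the $e_j$ are distinct elements of $\mathbb{Z}_m$ and the map $a \mapsto \chi(a/m)$ is injective on $\mathbb{Z}_m$, the $\omega_j$ are pairwise distinct, so this determinant is nonzero. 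By the equivalence in Proposition \ref{Basis pair}, $(E, B)$ is a basis pair.

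Since the verification applied to an arbitrary member of the family, the same $B$ works simultaneously for all of them, establishing the proposition. There is essentially no hard step: the whole argument leans on the fact that a cyclic group has a canonical set of ``consecutive'' frequencies $\{0, 1, \ldots, n-1\}$ which makes the Fourier matrix Vandermonde for every $E$, and this structural luxury is exactly what fails in non-cyclic settings like Example \ref{ex:nosimulbasis}. If any part of the proof demanded care, it would be making sure one states the identification $\widehat{\mathbb{Z}_m} \cong \mathbb{Z}_m$ cleanly and cites translational invariance (Prop. \ref{prop:invtrans}) should one wish to shift $B$ for aesthetic reasons; neither is substantive.
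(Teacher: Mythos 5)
Your proposal is correct and follows essentially the same route as the paper: both take $B=\{0,1,\dots,n-1\}$ under the identification $\widehat{\Z_m}\cong\Z_m$, observe that $T(E,B)$ is a Vandermonde matrix in the distinct roots of unity $\chi(e_j/m)$, and conclude that the determinant is nonzero for every $E$ of size $n$ simultaneously. No gaps.
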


\begin{proof}
Let $B=\set{0,1,\dots,k-1} \subseteq \widehat{G}$, under the canonical identification.
We show that for any $E\subseteq G$ of size $k$, $(E,B)$ is a basis pair.
Writing $E=\set{x_0,x_1,\dots,x_{k-1}}$, $T(E,B)$ is the Vandermonde matrix
$$[\exp(2\pi i/m \cdot x_a b)]_{0 \leq a,b \leq k-1}.$$
Because $\exp(2\pi i/m \cdot x_a)$, $0 \leq a \leq k-1$, are distinct, the Vandermonde determinant of this matrix is nonzero, so that $(E,B)$ is a basis pair.
Hence, $B$ is a simultaneous basis for the family of all subsets of $G$ of size $k$.
\end{proof}

The condition for when a family of subsets admits a simultaneous basis may be worth studying further.

\subsection{Main result}
\label{subsec:mainResult}

In this subsection, we prove the following main result (Thm. \ref{thm:mainUpperBdLevelComplexity}) relating multi-tiling level and geometric complexity of a set to its spectral behavior.
For any $\ell,k \in \Z_{>0}$, there is a number $q(\ell,k)$ with the following property.
For any finite abelian group $G$, subgroup $H \subseteq G$, and subset $E \subseteq G$ satisfying the hypotheses:
\begin{enumerate}
\item $E$ multi-tiles $G$ by $H$ at level at most $\ell$;
\item there are at most $k$ distinct translated $H$-cross sections of $E$ up to $H$-translation; and
\item these translated $H$-cross sections admit a simultaneous basis in $\widehat{H}$,
\end{enumerate}
$\widetilde{Q}(E) \leq q(\ell,k)$ for any normalized tightness quantity $\widetilde{Q}$.

We may regard $k$ as the ``geometric complexity'' of $E$.
Such sets $E$ generalize tiles by subgroups ($\ell=k=1$) as in Proposition \ref{prop:TilesAreSpec}.

We conjecture that the second and third hypotheses are necessary; further insight is given in Examples \ref{ex:crossSectNoSimulBasis} and \ref{ex:crossSectBadSimulBasis}.

The strategy to proving our main result is as follows.
Using the ``looping'' ideas of Section \ref{subsec:boundsUpperE}, a good simultaneous basis $B_H$ for the translated $H$-cross sections of $E$ may be constructed (Prop. \ref{prop:lowQsimul}).
Then, Theorem \ref{thm:decomposeMultiTile} implies that $E$ pairs well spectrally with $B := \widetilde{B_H} \cdot H^\perp$. (See the setup in Section \ref{subsec:decomposeMultiTile}.)

We start with the proposition for the first step.

\begin{prop}
\label{prop:lowQsimul}
Let $G$ be a finite abelian group and $M$ be the minimal exponent of $G$.
Let $E_1,\dots,E_m \subseteq G$ have equal size $n>1$, and let $B \subseteq \widehat{G}$ be a simultaneous basis for the $E_i$. Then there is a $1 \leq k \leq M$ that is relatively prime to $M$ such that
$$\prod_{i=1}^m D_{E_i}(kB) \geq 1.$$
Consequently, for this $k$,
\begin{align*}
\min_{1 \leq i \leq m} L_{E_i}(kB) &>
\paren{\frac{n-1}{n}}^{n-1} \frac{1}{n^{mn-1}}
> \frac{1}{e n^{mn-1}}, \\
\max_{1 \leq i \leq m} U_{E_i}(kB) &<
n^2-\frac{n-1}{n^{m-1+\frac{m+1}{n-1}}}
\leq n^2-\frac{n-1}{2^{m+1}n^{m-1}},\\
\max_{1 \leq i \leq m} \rho_{E_i}(kB) &< 4n^{mn}.
\end{align*}
\end{prop}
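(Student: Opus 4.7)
The plan is to adapt the strategy behind Theorem \ref{thm:upperbdcondeb} and Proposition \ref{prop:upperBdPairLoopAround}, but applied to a \emph{product} of determinants rather than to a single one, so that the ``looping'' argument of Lemma \ref{lem:lowerbdrootunitykB} yields a simultaneous lower bound on all the $D_{E_i}(kB)$.

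First I would write $G \cong \Z_{n_1} \times \cdots \times \Z_{n_\ell}$ with $M = \lcm(n_1,\dots,n_\ell)$ and set $\omega = \chi(1/M)$, a primitive $M$th root of unity. Following the matrix $M(x)$ construction in the proof of Theorem \ref{thm:upperbdcondeb}, for each $1 \leq i \leq m$ I would define an $n \times n$ matrix $N_i(x) \in \Z[x]^{n \times n}$ whose entries are monomials in $x$, arranged so that $N_i(\omega^k) = T(E_i, kB)$ for every integer $k$ coprime to $M$. Then set
\[
P(x) \;=\; \prod_{i=1}^m \det N_i(x) \;\in\; \Z[x].
\]
Since $B$ is a simultaneous basis for the $E_i$, each $T(E_i, B) = N_i(\omega)$ is invertible, so $P(\omega) \neq 0$. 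Lemma \ref{lem:lowerbdrootunitykB} then gives some $1 \leq k \leq M$ with $\gcd(k,M)=1$ such that $|P(\omega^k)| \geq 1$, i.e.
\[
\prod_{i=1}^m D_{E_i}(kB) \;=\; \prod_{i=1}^m |\det N_i(\omega^k)| \;=\; |P(\omega^k)| \;\geq\; 1.
\]

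For the consequent bounds, I would use Proposition \ref{prop:simplebds}, which gives $D_{E_i}(kB) \leq n^{n/2}$ for every $i$. Combining with $\prod_i D_{E_i}(kB) \geq 1$ yields, for each fixed $j$,
\[
D_{E_j}(kB) \;\geq\; \prod_{i \neq j} D_{E_i}(kB)^{-1} \;\geq\; n^{-(m-1)n/2}.
\]
Plugging this into the three inversions in Proposition \ref{prop:invertupperbdD} applied to each pair $(E_j, kB)$ gives
\[
L_{E_j}(kB) > \Bigl(\tfrac{n-1}{n^2}\Bigr)^{n-1} \cdot \tfrac{1}{n^{(m-1)n}}, \qquad
\rho_{E_j}(kB) < \tfrac{4n^n}{n^{-(m-1)n}} = 4n^{mn},
\]
and the analogous estimate for $n^2 - U_{E_j}(kB)$. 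Then I would simplify: $\bigl(\tfrac{n-1}{n^2}\bigr)^{n-1} n^{-(m-1)n}$ rearranges to $\bigl(\tfrac{n-1}{n}\bigr)^{n-1} n^{-(mn-1)}$, and using $(1-1/n)^{n-1} > 1/e$ gives the advertised $1/(en^{mn-1})$ bound on $L$. For $U$, the exponent $\tfrac{(m-1)n+2}{n-1}$ coming from Proposition \ref{prop:invertupperbdD} simplifies algebraically to $m-1 + \tfrac{m+1}{n-1}$, and the estimate $n^{1/(n-1)} \leq 2$ (valid for all $n \geq 2$) converts this into the clean $(n-1)/(2^{m+1}n^{m-1})$ form.

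The main conceptual obstacle is purely notational: setting up $P(x)$ so that evaluation at $\omega^k$ really reproduces all $m$ Fourier matrices $T(E_i, kB)$ \emph{simultaneously}. Once that is in place, the argument is essentially automatic because the three ingredients---the integrality trick of Lemma \ref{lem:lowerbdrootunitykB}, the Hadamard-type upper bound $D_{E_i}(kB) \leq n^{n/2}$, and the inversion estimates of Proposition \ref{prop:invertupperbdD}---each do their job independently. The minor arithmetic point to be careful about is that the looping step only guarantees a joint lower bound on the product $\prod_i D_{E_i}(kB)$, so one pays a factor of $n^{(m-1)n/2}$ in each individual $D_{E_j}(kB)$, and this is precisely where the exponent $mn$ (rather than $n$) appears in the final bounds.
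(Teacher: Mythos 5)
Your proposal is correct and follows essentially the same route as the paper: the integrality of the product of Galois conjugates forces $\prod_{\gcd(k,M)=1}\prod_i D_{E_i}(kB)\geq 1$, pigeonhole selects a good $k$, the Hadamard bound $D_{E_i}(kB)\leq n^{n/2}$ isolates each factor, and Proposition \ref{prop:invertupperbdD} converts the determinant bound into the stated estimates. The only (immaterial) difference is that you apply Lemma \ref{lem:lowerbdrootunitykB} once to the single polynomial $\prod_i \det N_i(x)$, whereas the paper applies the symmetric-polynomial argument to each $\det N_i(x)$ separately and then multiplies over $i$.
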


\begin{proof}
By arguments involving symmetric polynomials in Lemma \ref{lem:lowerbdrootunity} and Theorem \ref{thm:upperbdcondeb}, for each $i$,
$$\mathbb{Z}_{>0} \ni \prod_{1 \leq k \leq M,\gcd(k,M)=1} D_{E_i}(kB) \geq 1.$$
Taking the product for all $i$, we obtain
$$\prod_{1 \leq k \leq M,\gcd(k,M)=1} \paren{\prod_{i=1}^m D_{E_i}(kB)} \geq 1,$$
whence the first result.

Now let $k$ be such that $\prod_{i=1}^m D_{E_i}(kB) \geq 1$.
By Proposition \ref{prop:simplebds}, $D_{E_i}(kB) \leq n^{n/2}$ for each $1 \leq i \leq m$.
Therefore, for each $i$,
$$D_{E_i}(kB) \geq \frac{1}{\prod_{1 \leq j \leq m, j \neq i}D_{E_j}(kB)} \geq n^{n(1-m)/2}.$$
Using Proposition \ref{prop:invertupperbdD} and calculating as in Proposition \ref{prop:upperBdPairLoopAround}, we obtain the desired bounds.
\end{proof}

We now turn to our main result.

\begin{theorem}
\label{thm:mainUpperBdLevelComplexity}
Let $H \subseteq G$ be a subgroup.
Let $E \subseteq G$ multi-tile $G$ with partner $H$ at level $\ell>1$. Suppose that, up to $H$-translation, there are $k$ distinct translated $H$-cross sections of $E$.
Assume further that these translated $H$-cross sections have a simultaneous basis in $\widehat{H}$. Then
$$\widetilde{L}(E) < e\ell^{k\ell}, \quad
\widetilde{U}(E) < \ell-\frac{\ell-1}{2^{k+1}\ell^k},
\quad \rho(E) < e \ell^{k\ell+1}, \quad
\widetilde{D}(E) \leq \sqrt{\ell}.$$
In particular, all normalized tightness quantities have upper bounds that depend only on $\ell$ and $k$.
\end{theorem}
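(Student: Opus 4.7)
The plan is to construct two basis partners $B^{(1)}, B^{(2)} \subseteq \widehat{G}$, each of the multi-tile form $\widetilde{B_H}\cdot H^\perp$ from Section~\ref{subsec:decomposeMultiTile}, and read off the tightness of $(E,B^{(i)})$ from that of the $H$-cross sections via Theorem~\ref{thm:decomposeMultiTile}. Let $F_1',\dots,F_k' \subseteq H$ be the distinct translated $H$-cross sections of $E$, each of size $\ell$, and let $B_H^0 \subseteq \widehat{H}$ be the simultaneous basis furnished by hypothesis (iii).

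For the bounds on $\widetilde{L}$, $\widetilde{U}$, and $\rho$, I would apply Proposition~\ref{prop:lowQsimul} to the $k$-family $F_1',\dots,F_k'$ together with $B_H^0$, with parameters $n=\ell$ and $m=k$ in that proposition. This yields an integer $k_0$ coprime to the minimal exponent of $H$ such that $B_H^{(1)} := k_0 B_H^0$ satisfies
\[
\min_j L_{F_j'}(B_H^{(1)}) > \frac{1}{e\ell^{k\ell-1}}, \qquad \max_j U_{F_j'}(B_H^{(1)}) < \ell^2 - \frac{\ell-1}{2^{k+1}\ell^{k-1}}.
\]
Then I would set $B^{(1)} := \widetilde{B_H^{(1)}}\cdot H^\perp$. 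Because every cross section $F_i'$ (for $i=1,\dots,|G/H|$) is an $H$-translate of one of the $F_j'$, the maxima over $i$ and over $j$ agree, and Theorem~\ref{thm:decomposeMultiTile} gives $\widetilde{L}_E(B^{(1)}) < e\ell^{k\ell}$, $\widetilde{U}_E(B^{(1)}) < \ell - (\ell-1)/(2^{k+1}\ell^k)$, and $\rho_E(B^{(1)}) = \max_j \widetilde{L}_{F_j'}(B_H^{(1)}) \cdot \max_j \widetilde{U}_{F_j'}(B_H^{(1)}) < e\ell^{k\ell}\cdot \ell = e\ell^{k\ell+1}$. Passing to $\min_B$ yields the first three conclusions.

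For the $\widetilde{D}$ bound, a separate looping is needed, since the multiplicities $a_j$ of the distinct types among the $m := |G/H|$ cross sections can make $\prod_{i=1}^m D_{F_i'}(B_H^{(1)}) = \prod_{j=1}^k D_{F_j'}(B_H^{(1)})^{a_j}$ small even when $\prod_{j=1}^k D_{F_j'}(B_H^{(1)}) \geq 1$. Instead, I would apply Proposition~\ref{prop:lowQsimul} to the full multi-set $\{F_i'\}_{i=1}^m$, for which $B_H^0$ is still a simultaneous basis by translation invariance (Prop.~\ref{prop:invtrans}). This yields a possibly different $k_0'$ with $B_H^{(2)} := k_0' B_H^0$ satisfying $\prod_{i=1}^m D_{F_i'}(B_H^{(2)}) \geq 1$. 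Setting $B^{(2)} := \widetilde{B_H^{(2)}}\cdot H^\perp$ and unwinding the definition of $\widetilde{D}$ in Theorem~\ref{thm:decomposeMultiTile} gives
\[
\widetilde{D}_E(B^{(2)}) = \Bigl(\prod_{i=1}^m \widetilde{D}_{F_i'}(B_H^{(2)})\Bigr)^{1/m} = \frac{\sqrt{\ell}}{\bigl(\prod_{i=1}^m D_{F_i'}(B_H^{(2)})\bigr)^{1/(m\ell)}} \leq \sqrt{\ell}.
\]

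The main obstacle, and the reason two separate partners are needed, is precisely this split: the $k$-dependent bounds on $\widetilde{L}$, $\widetilde{U}$, $\rho$ come from looping Proposition~\ref{prop:lowQsimul} over only the $k$ distinct types, whereas the $|G/H|$-independent bound $\widetilde{D}(E) \leq \sqrt{\ell}$ crucially requires looping over all $m$ cross sections (counted with multiplicities) in order to secure $\prod_i D_{F_i'} \geq 1$. Once this split is recognized, the remainder of the argument amounts to bookkeeping in the decomposition of Theorem~\ref{thm:decomposeMultiTile}.
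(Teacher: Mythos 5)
Your proposal coincides with the paper's proof for the bounds on $\widetilde{L}$, $\widetilde{U}$, and $\rho$: the paper likewise applies Proposition \ref{prop:lowQsimul} to the $k$ distinct translated $H$-cross sections with their simultaneous basis (so $n=\ell$, $m=k$ in that proposition), obtains a single good multiplier $s$, and pairs $E$ with $\widetilde{sB}\cdot H^\perp$ via Theorem \ref{thm:decomposeMultiTile}; your constants match.

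Where you genuinely diverge is the $\widetilde{D}$ bound, and your version is the more careful one. The paper reuses the \emph{same} multiplier $s$, quoting only $\prod_{j=1}^k D_{F_j}(sB)\geq 1$ over the $k$ distinct types, and then invokes Theorem \ref{thm:decomposeMultiTile}, whose $\widetilde{D}$ formula is a geometric mean over all $m=|G/H|$ cross sections counted with multiplicity. As you observe, $\prod_{j=1}^k D_{F_j}(sB)\geq 1$ does not force $\prod_{j=1}^k D_{F_j}(sB)^{a_j}\geq 1$ when the multiplicities $a_j$ are unequal; with only the individual lower bounds $D_{F_j}(sB)\geq \ell^{\ell(1-k)/2}$ from Proposition \ref{prop:lowQsimul}, the single-partner argument yields something like $\widetilde{D}(E)\leq \ell^{k/2}$ rather than the stated $\sqrt{\ell}$. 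Your second application of Proposition \ref{prop:lowQsimul} to the full multiset of $m$ cross sections is legitimate --- the symmetric-function argument in its proof is indifferent to repeated sets, and the simultaneous-basis property passes to $H$-translates by Proposition \ref{prop:invtrans} --- and it secures $\prod_{i=1}^m D_{F_i'}\geq 1$ directly, hence the clean constant $\sqrt{\ell}$. Using two different basis partners for different quantities is harmless, since each of $\widetilde{L}(E)$, $\widetilde{U}(E)$, $\rho(E)$, $\widetilde{D}(E)$ is separately a minimum over $B$. In short: your proof is correct, agrees with the paper's on the main bounds, and repairs a small oversight in the paper's treatment of $\widetilde{D}$.
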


\begin{proof}
Let $F_1,F_2,\dots,F_k \subseteq H$ be the distinct translated $H$-cross sections of $E$.
Let $B \subseteq \widehat{H}$ be their simultaneous basis.
By Proposition \ref{prop:lowQsimul}, there is an $s$ such that
$$\prod_{i=1}^k D_{F_i}(sB) \geq 1, \quad \min_{1\leq i \leq k} L_{F_i}(sB) > \frac{1}{e \ell^{k\ell-1}}, \quad
\max_{1 \leq i \leq k} U_{F_i}(sB) < \ell^2-\frac{\ell-1}{2^{k+1}\ell^{k-1}}.$$
Theorem \ref{thm:decomposeMultiTile} now yields the conclusion by pairing $E$ with $\widetilde{sB} \cdot H^\perp$.
\end{proof}

\begin{remark}
In the case $\ell=1$, which is not covered by Theorem \ref{thm:mainUpperBdLevelComplexity}, we know that $E$ is spectral by Proposition \ref{prop:TilesAreSpec}.
\end{remark}

\begin{remark}
By taking $H=G$ in Theorem \ref{thm:mainUpperBdLevelComplexity}, we recover the results of Theorem \ref{thm:upperBdSetBySize} that $\widetilde{Q}(E)$ have upper bounds depending only on $\abs{E}$.
Note that Theorem \ref{thm:upperBdSetBySize} gives a stronger bound for $\rho(E)$.
\end{remark}

As an application, 
Theorem \ref{thm:mainUpperBdLevelComplexity} shows that the sets in the example below behave well spectrally.

\begin{example}
\label{ex:twoCrossSects}
Let $m \geq 3$ and let $G=\Z_m^2$. Let
$$E_m=\paren{\set{0,1} \times (\Z_m \setminus \set{0})} \cup \set{(0,0),(2,0)}.$$
See Figure \ref{fig:complexFamilyBdRieszRatios}.
With $H=\Z_m \times \set{0}$, $E_m$ multi-tiles $G$ by $H$ at level $\ell=2$ and has $k=2$ distinct translated $H$-cross sections $\set{(0,0),(1,0)},\set{(0,0),(2,0)}$.
By Proposition \ref{prop:cyclicsimulbasis}, the translated $H$-cross sections have a simultaneous basis in $\widehat{H}$.
Therefore, Theorem \ref{thm:mainUpperBdLevelComplexity} yields
$$\rho(E_m) < 32e,$$
bounded independent of $m$.

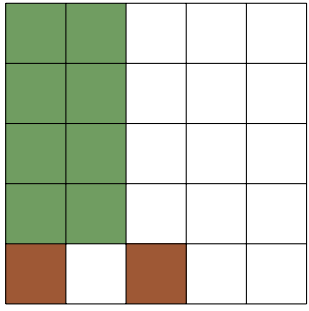
\begin{figure}
\begin{center}
\begin{tikzpicture}[scale=0.8]
\fill[Brown!70!white] (0,0) rectangle (1,1);
\fill[Brown!70!white] (2,0) rectangle (3,1);
\fill[OliveGreen!70!white] (0,1) rectangle (2,5);
\draw (0,0) grid (5,5);
\end{tikzpicture}
\end{center}
\caption{A set in the family in Example \ref{ex:twoCrossSects} with bounded Riesz ratios.}
\label{fig:complexFamilyBdRieszRatios}
\end{figure}
\end{example}

Compare Example \ref{ex:twoCrossSects} with Examples \ref{ex:cartclosespec} and \ref{ex:multiTileNearlySpec}.

We now present two examples that do not satisfy the second and third hypotheses of Theorem \ref{thm:mainUpperBdLevelComplexity}, respectively, in order to provide some insight into whether these hypotheses are necessary.

\begin{example}
\label{ex:crossSectNoSimulBasis}
Let $p \geq 3$ be a prime, $G_p=\Z_2^2 \times \Z_p$, and
$H=\Z_2^2 \times \set{0}$.
Let $F_i \subseteq H$ be the three subsets in Example \ref{ex:nosimulbasis}, under the appropriate identification.
Let
$$E_p = \paren{F_1 \times \set{0}} \cup \paren{F_2 \times \set{1}} \cup \paren{F_3 \times \set{2,3,\dots,p-1}}.$$
Then $E_p$ multi-tiles $G_p$ by $H$ at level $\ell=2$ and has $k=3$ distinct translated $H$-cross sections $F_i$, but the $F_i$ do not admit a simultaneous basis in $\widehat{H}$. Thus, Theorem \ref{thm:mainUpperBdLevelComplexity} does not apply, and we do not know whether $\rho(E_p)$ is bounded independent of $p$.

In fact, for any $B_p \subseteq \widehat{G_p}$ of the form $B_p = (B_p)_H \times \widehat{\Z_p}$, $(B_p)_H \subseteq \widehat{H}$, Theorem \ref{thm:decomposeMultiTile} implies that $(E_p,B_p)$ are not basis pairs.
So it is possible that that $\rho(E_p) \to \infty$ as $p \to \infty$.
\end{example}

\begin{example}
\label{ex:crossSectBadSimulBasis}
Let $p \geq 3$ be a prime, $G_p=\Z_p^2$, and $H_p=\Z_p \times \set{0}$. Set
$$E_p=\paren{\set{0} \times \Z_p} \cup \set{(1,0)} \cup \set{(x,x) : 1 \leq x \leq p-1}.$$
Then $E_p$ multi-tiles $G_p$ by $H_p$ at level $\ell=2$ but has $k=(p-1)/2$ distinct translated $H_p$-cross sections (which admit a simultaneous basis by Proposition \ref{prop:cyclicsimulbasis}).
Since $k$ is not bounded, we cannot conclude by Theorem \ref{thm:mainUpperBdLevelComplexity} whether $\rho(E_p)$ is bounded.

We conjecture that $\rho(E_p) \to \infty$ as $p \to \infty$.
In fact, for any $B_p \subseteq \widehat{G_p}$ of the form $B_p = (B_p)_H \times \widehat{\Z_p}$, $(B_p)_H \subseteq \widehat{H}$, we show that
$\rho_{E_p}(B_p) \to \infty$ as $p \to \infty$.
By translational invariance (Prop. \ref{prop:invtrans}),
we can assume $(B_p)_H=\set{0,b_p}$, where $1 \leq b_p \leq p-1$.
The translated $H_p$-cross sections of $E_p$ are, up to $H_p$-translation, $\set{0,a}$ for every $1 \leq a \leq p-1$. So by Theorem \ref{thm:decomposeMultiTile},
\begin{align*}
\rho_{E_p}(B_p) &\geq \max_{1 \leq a \leq p-1} \rho_{\set{0,a}}(\set{0,b_p})
\geq \rho_{\set{0,b_p^{-1}}}(\set{0,b_p}) \\
&= \rho_{\set{0,1}}(\set{0,1}) \to \infty
\end{align*}
as $p \to \infty$, using invariance under $\Z$-linear transformation (Prop. \ref{prop:invlineareb}) and Example \ref{ex:ebfarspec}.
\end{example}

\section{Conjectures}
\label{sec:conj}

\subsection{Continuity of the Riesz ratio}
\label{subsec:continuityRiesz}

In this subsection, we investigate continuity properties of the Riesz ratio. The motivation for this is the potential lifting of our results to construct a subset $E$ of $\R^d$ with no Riesz basis.
Specifically, we might aim to construct a sequence of sets $E_i \subseteq \R^d$ with large Riesz ratios that converge in some sense to a set $E \subseteq \R^d$.
(We define the Riesz ratio $\rho_E(B)$ for a pair $(E,B)$ to be the ratio between the optimal lower and upper Riesz constants,
and $\rho(E)=\inf_B \rho_E(B)$.)
If  $\rho(E_i) \to \infty$, then we might hope that this will imply that $\rho(E)=\infty$, i.e., $E$ has no Riesz basis.

In order to run this argument, the Riesz ratio must at least be upper semi-continuous in the sense that $\rho(E) \geq \limsup_{i \to \infty} \rho(E_i)$.
Nevertheless, we will show that, for a specific example in the context of finite abelian groups, the Riesz ratio is instead \emph{lower} semi-continuous at points of discontinuity.
Thus, the continuity seems to be ``going the wrong way."
If similar behaviors occur in $\R^d$, then there is a potential difficulty in using approximation to construct subsets of $\R^d$ with no Riesz basis.

Because our setting of finite abelian groups is discrete, we will first extend the notion of Riesz ratio to subsets with density, called \emph{generalized subsets}.
Then we will compute Riesz ratios of generalized subsets of $\Z_2$ and show that the Riesz ratio is lower semi-continuous in this case.

Let $G$ be a finite abelian group. A \emph{generalized subset} $E$ of $G$ is a function $E:G \to \R_{\geq 0}$.
This can be viewed as the subset $\supp E \subseteq G$ where each element $x \in \supp E$ has ``density'' $E(x)$.
An ordinary subset $E \subseteq G$ corresponds to the generalized subset $1_E$.
A generalized subset $E$ of $G$ induces the measure $\mu_E$ on $G$ defined by $\mu_E(F) = \sum_{x \in F} E(x)$ for any $F \subseteq G$.
Let $L^2(E)$ be the space of functions $f: G \to \C$ with the norm
$$\norm{f}_{L^2(E)}^2 = \int_G \abs{f}^2 \, d\mu_E = \sum_{x \in G} \abs{f(x)}^2E(x),$$
where we identify two functions that agree a.e. Hence, $L^2(E)$ is a vector space of dimension $\abs{\supp E}$.

A generalized subset $B$ of $\widehat{G}$ can be analogously defined.
Let $E$ and $B$ be generalized subsets of $G$ and $\widehat{G}$, respectively.
We say that $(E,B)$ is an \emph{equal-size pair} if $L^2(E)$ and $L^2(B)$ have equal dimension, that is, $\abs{\supp E}=\abs{\supp B}$.
Analogously to \eqref{eq:TEBgeneral}, define $T(E,B):L^2(B) \to L^2(E)$ to be the linear operator
$$T(E,B)c = \sum_{\hat{g} \in \widehat{G}} c(\hat{g})B(\hat{g})\hat{g} \in L^2(E), \quad c \in L^2(B).$$
We call $(E,B)$ a \emph{basis pair} if $T(E,B)$ is invertible. Notice that a basis pair is always an equal-size pair.

Let $(E,B)$ be an equal-size pair.
Analogously to Definition \ref{def:cond}, define the \emph{Riesz ratio of $B$ with respect to $E$} to be
$$\rho_E(B) := \cond(T(E,B))^2=\norm{T(E,B)}^2\norm{T(E,B)^{-1}}^2,$$
where the norm is the induced operator norm and this quantity is defined to be $\infty$ when $T(E,B)$ is not invertible.
(Similarly, we can also define $L_E(B)$ and $U_E(B)$, but we will not use these quantities.)
Define the \emph{Riesz ratio of $E$} to be $\rho (E) := \inf_B \rho_E(B)$, where the infimum ranges over all $B$ such that $(E,B)$ is an equal-size pair.

We now investigate Riesz ratios of generalized subsets of $\Z_2$ in order to gain more insight into continuity properties of the Riesz ratio.

Let $E$ be a nonzero (``nonempty'') generalized subset of $\Z_2$, and let $x=E(0)$ and $y=E(1)$, where $(x,y) \neq (0,0)$.
If $x=0$ or $y=0$, then $\abs{\supp E}=1$.
So if $(E,B)$ is an equal-size pair in this case, then $\abs{\supp B}=1$.
We can check that $\rho_E(B)=1$, and so $\rho (E)=1$.

Suppose now that $x,y \neq 0$. Let $\widehat{\Z_2}=\{\hat{0},\hat{1}\}$, where $\hat{i}(j)=(-1)^{ij}$ for $i,j \in \{0,1\}$.
Let $B$ be a generalized subset of $\widehat{\Z_2}$, where $a=B(\hat{0})$ and $b=B(\hat{1})$ are both nonzero.
Let $c \in L^2(B)$ with $c_0=c(\hat{0})$ and $c_1=c(\hat{1})$.
We can compute
$$f(c_0,c_1):=\frac{\norm{T(E,B)c}_{L^2(E)}^2}{\norm{c}_{L^2(B)}^2}
= \frac{x(c_0a+c_1b)^2+y(c_0a-c_1b)^2}{ac_0^2+bc_1^2}.$$
Thus, $\rho_E(B)$ is the ratio of the supremum and infimum of $f(c_0,c_1)$ over all nonzero $(c_0,c_1) \in \C^2$.

By scale invariance, it suffices to consider the case $ac_0^2+bc_1^2=1$.
Let $\sqrt{a}c_0=\cos\theta$ and $\sqrt{b}c_1=\sin \theta$, for some $\theta\in\mathbb{R}$.
Then we must maximize and minimize
\begin{align*}
f(\theta)&=x(\sqrt{a}\cos\theta+\sqrt{b}\sin\theta)^2
+ y(\sqrt{a}\cos\theta-\sqrt{b}\sin\theta)^2 \\
&= \frac{(x+y)(a+b)}{2} + \frac{(x+y)(a-b)}{2}\cos 2\theta
+(x-y)\sqrt{ab}\sin 2\theta.
\end{align*}
The maximum and minimum are
$(x+y)(a+b)/2 \pm \sqrt{((x+y)(a-b)/2)^2+(x-y)^2ab}$.
So
$$\rho_E(B) = \frac{1+\sqrt{q}}{1-\sqrt{q}},
\quad q = 1-\frac{4ab}{(a+b)^2}\frac{4xy}{(x+y)^2}.$$
To minimize $\rho_E(B)$, we must minimize $q$ over all nonzero $(a,b) \in \C^2$.
This occurs when $a=b$, whence
$$\rho(E) = \frac{\max(x,y)}{\min(x,y)}.$$
So in the $(x,y)$-plane, $\rho(E)=\cot\tau$, where $\tau$ is the angle that the line from the origin to $(x,y)$
makes with the closer of the $x$-axis and the $y$-axis.

We observe the following.
On the line $y=x$, $\rho(E)$ is one.
As we rotate the line closer to the $x$-axis or the $y$-axis, $\rho(E)$ increases and approaches infinity.
However, exactly on the $x$-axis and the $y$-axis, $\rho(E)$ once again becomes one.
Thus, the Riesz ratio is discontinuous at points where $\abs{\supp E}$ changes, in a way that shows $\rho(E)$ is not upper semi-continuous. On the other hand, the above calculations prove $\rho(E)$ is lower semi-continuous, i.e.,
$\rho(E) \leq \liminf_{E_0 \to E} \rho(E_0)$.

As stated above, the fact that the Riesz ratio is only lower semi-continuous in this example suggests a potential difficulty in using approximation to prove that a set in $\R^d$ has no exponential Riesz basis. However, if the lower semi-continuity persists in $\mathbb{R}^{d}$, then this leaves open the possibility of constructing interesting limit sets which have an exponential Riesz basis but do not multi-tile.

\subsection{Fractals}
\label{subsec:fractals}

These are some definitions of fractals, some conjectured to have no Riesz exponential basis.

Let $m\geq 2$ and $d\geq 2$ be positive integers, and let $S$ be a nonempty subset of $\mathbb{Z}_{m}^{d}$. For each integer $k\geq 1$, define the inclusion mapping $i_{k}:\mathbb{Z}_{m}^{d}\to\mathbb{Z}_{m^k}^{d}$ by $i_{k}(x)=(x_{i}\mod m^{k})_{i=1}^{d}$. For each integer $k\geq 1$, we also define the expansion mapping $g_{k}:\mathbb{R}^{d}\to\mathbb{Z}_{m^k}^{d}$ by
\[
g_{k}(x)=\left(\lfloor m^{k}x_{i} \rfloor\mod m^{k}\right)_{i=1}^{d}.
\]
Define the ``fractal'' $S^{\ast}\subseteq [0,1)^{d}$ by
\[
S^{\ast}:=[0,1]^{d}\cap\bigcup_{\ell=1}^{\infty}g_{\ell}^{-1}\left(i_{\ell}(S)\right),
\]
define the ``pre-fractal" sets $S_{k}^{\ast}\subseteq [0,1)^{d}$ by
\[
S_{k}^{\ast}:=[0,1]^{d}\cap\bigcup_{\ell=1}^{k}g_{\ell}^{-1}\left(i_{\ell}(S)\right),
\]
and define the discrete sets $S_{k}\subseteq \mathbb{Z}_{m^{k}}^{d}$ by
\[
S_{k}=g_{k}\left(\bigcup_{\ell=1}^{k}g_{\ell}^{-1}\left(i_{\ell}^{-1}(S)\right)\right).
\]

For example, when $d=2$, $m=2$, and $S=\{(1,0)\}$, we have the fractal $S^{\ast}=\{(x,y)\in [0,1)^{2}:\text{ for some integer }k\geq 0, \text{ we have }1/2^{k}\leq x<1/2^{k-1}\text{ and }0\leq y<1/2^{k}\}$. Also, for example, the discrete set $S_{2}\subseteq\mathbb{Z}_{4}^{2}$ is given by $S_{2}=\{(1,0), (2, 0), (3, 0), (2,1), (3,1)\}$ here.

We also define, generalizing the intersection by Terence Tao of inverse images with sets of the form $[-R,R]^{d}$ with $R\geq 1$ which is ``large but bounded," the sets
\[
S_{R}^{\ast}:=\bigcup_{z\in\mathbb{Z}^{d}\cap [-R,R]^{d}}(z+S^{\ast})
\]
and
\[
S_{k, R_k}^{\ast}:=\bigcup_{z\in\mathbb{Z}^{d}\cap [-R_k,R_k]^{d}}(z+S_{k}^{\ast}).
\]

\begin{conj}
For each integer $m>1$ and integer $d>1$, there exists a nonempty set $S\subseteq\mathbb{Z}_{m}^{d}$ such that (1) there exist $R_{k}\geq 1$ with $R:=\sup_{k\geq 1}R_{k}<\infty$ satisfying
\[
\limsup_{k\to\infty}\rho(S_{k, R_k}^{\ast})\leq \rho(S_{R}^{\ast})
\]
for which some $\varepsilon_{k}>0$ with $\sup_{k\geq 1}\varepsilon_{k}<\infty$ exist satisfying
\[
\abs{\rho(S_{k, R_k}^{\ast})-\rho(S_{k})}<\varepsilon_{k};
\]
and (2) the sets $\{S_{k}\}_{k=1}^{\infty}$ satisfy $\rho(S_{k})\to\infty$ as $k\to\infty$. 
\end{conj}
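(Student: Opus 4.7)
The strategy is a multi-scale attack: choose $S\subseteq \mathbb{Z}_m^d$ so that at every scale $\ell\leq k$ the discrete set $S_k$ inherits a rescaled copy of a configuration that forces divergence of the Riesz ratio, and then transfer this behavior through the fractal-to-Euclidean window. The simplest candidate is $m=d=2$ with $S=\{(0,0),(1,0)\}$, in which case $S_k\subseteq \mathbb{Z}_{2^k}^2$ is a ``dyadic staircase'' that at each scale $\ell$ contains an affinely embedded image of the divergence set $(\{0\}\times \mathbb{Z}_n)\cup\{(1,0)\}$ from Example \ref{ex:condtoinfty}. For part (2), I would show by induction on $k$ that this embedding survives after taking all scales simultaneously. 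Concretely, the subset of $S_k$ lying in the coordinate subspace $\mathbb{Z}_{2^k}\times\{0\}$ together with a single carefully chosen offset point is affinely equivalent to a scale-$2^k$ copy of Example \ref{ex:condtoinfty}. By the affine restriction property (Prop.\ \ref{prop:affinerestrict}), this sub-configuration behaves in $\mathbb{Z}_{2^k}^2$ exactly as it does in its ambient coordinate plane, so the Pigeonhole collision in Proposition \ref{prop:almostinsubgp} yields $L(S_k)\leq O(1)$ while $|S_k|\to\infty$, hence $\rho(S_k)\geq |S_k|/L(S_k)\to\infty$.

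For the first half of part (1), observe that $S_{k,R_k}^*$ is a finite union of translates of the pre-fractal $S_k^*$, and $S_k^*$ is literally the small-cube lift of $S_k$ through $g_k$, consisting of half-open cubes of side $m^{-k}$ indexed by points of $S_k$. This structure is dual to the multi-tiling setup of Section \ref{subsec:decomposeMultiTile}, with the translation lattice $\mathbb{Z}^d\cap[-R_k,R_k]^d$ playing the role of the external factor and $[0,m^{-k})^d$ playing the role of a base cell. Using the tensor-product philosophy behind Proposition \ref{prop:TEBtensor}, I would build a candidate Riesz partner for $S_{k,R_k}^*$ by pairing a near-optimal discrete partner $B\subseteq \widehat{\mathbb{Z}_{m^k}^d}$ for $S_k$ with a Fourier-series partner on $\mathbb{Z}^d\cap[-R_k,R_k]^d$ and a continuous orthogonal basis of $L^2([0,m^{-k})^d)$. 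A Schur-type estimate on the Gramian should then yield $\rho(S_{k,R_k}^*)\leq \rho(S_k)\cdot(1+o(1))$, and a matching lower bound obtained by restricting frequencies onto the dual lattice would give $|\rho(S_{k,R_k}^*)-\rho(S_k)|<\varepsilon_k$ with $\sup_k\varepsilon_k<\infty$, as desired.

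The main obstacle, and the step where this plan becomes genuinely speculative, is the upper semi-continuity claim
\[
\limsup_{k\to\infty}\rho(S_{k,R_k}^*)\leq \rho(S_R^*).
\]
Section \ref{subsec:continuityRiesz} computes explicitly in the $\mathbb{Z}_2$ toy model that $\rho$ is only lower, not upper, semi-continuous at points where the support of the measure contracts; this is exactly the \emph{wrong} direction for the argument to close. Because $S_k^*\nearrow S^*$ but the Lebesgue measures $|S_k^*|$ strictly increase to $|S^*|$, a naive continuity argument will almost certainly fail here. To bypass this, one likely needs either a new notion of convergence tailored to Riesz bases (for instance, convergence of Gramians on a fixed exponent set), or an additional structural hypothesis on $S$ ensuring that a ``skeleton'' of nearly optimal exponential partners is shared by $S_R^*$ and all $S_{k,R_k}^*$. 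A successful resolution of this last step, combined with the previous two paragraphs, would yield $\rho(S_R^*)=\infty$ and hence a fractal subset of $\mathbb{R}^d$ with no exponential Riesz basis, answering Question \ref{question:olevskii} for this candidate.
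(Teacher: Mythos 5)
The statement you are trying to prove is labeled a \emph{conjecture} in Section 7.2, and the paper offers no proof of it; there is nothing in the paper to compare your argument against. What you have written is a research plan for an open problem, and it contains at least two genuine gaps. First, your argument for part (2) rests on the claim that because $S_k$ contains an affinely embedded copy of the divergent configuration of Example \ref{ex:condtoinfty}, we get $L(S_k)\leq O(1)$ and hence $\rho(S_k)\to\infty$. But the tightness quantities are not monotone under passing from a subset to a superset: the affine restriction property (Prop.\ \ref{prop:affinerestrict}) applies only when the \emph{entire} set lies in a coset of a direct summand, and the smallest singular value of $T(S_k,B)$ is not controlled by that of a sub-pair obtained by deleting both rows and columns. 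Moreover, the pigeonhole mechanism of Proposition \ref{prop:almostinsubgp} forces a near-collision of two columns of $B$ only when the number $\ell$ of points of $E$ outside the subgroup $H$ is small relative to $|E|$; for the dyadic staircase $S_k$ the number of points off the coordinate axis grows with $k$, so the inequality $\sum(\operatorname{cnt}(C)-1)\leq\ell$ no longer forces $|B\cap C|\geq 2$ for any class $C$, and the argument does not close.

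Second, as you yourself acknowledge, the upper semi-continuity step $\limsup_k\rho(S_{k,R_k}^{\ast})\leq\rho(S_R^{\ast})$ is exactly the direction that the paper's explicit computation in Section \ref{subsec:continuityRiesz} shows can fail: in the $\Z_2$ toy model $\rho$ is only \emph{lower} semi-continuous at points where the support changes, and $S_k^{\ast}\nearrow S^{\ast}$ is precisely a situation where the support grows in the limit. Your proposed fixes (convergence of Gramians on a fixed exponent set, or a shared skeleton of near-optimal partners) are not carried out and there is no evidence in the paper that they can be. The transfer estimate $\abs{\rho(S_{k,R_k}^{\ast})-\rho(S_k)}<\varepsilon_k$ is likewise asserted via a ``Schur-type estimate'' that is not supplied; the discrete-to-continuous comparison between $L^2(S_k)$ and $L^2(S_k^{\ast})$ involves infinitely many exponentials on the continuous side and is not a finite-dimensional tensor product of the kind handled by Proposition \ref{prop:TEBtensor}. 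In short, your proposal correctly identifies the shape of an attack and its principal obstruction, but it does not constitute a proof, and the statement remains open.
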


\begin{cor}
The above conjecture implies that there exist bounded sets $S_{R}^{\ast}$ of positive Lebesgue measure such that $L^{2}(S_{R}^{\ast})$ has no exponential Riesz basis.
\end{cor}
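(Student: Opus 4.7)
The plan is to unpack the two parts of the conjecture directly into the claimed conclusion, using the paper's convention (from the discussion of equation \eqref{eq:Rieszratio}) that ``$\rho(E) = \infty$'' is equivalent to the nonexistence of an exponential Riesz basis for $L^2(E)$.

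First, I would verify the elementary geometric properties of $S_R^{*}$. Since $S^{*} \subseteq [0,1)^d$ by definition and $R = \sup_k R_k < \infty$, the set $S_R^{*} = \bigcup_{z \in \mathbb{Z}^d \cap [-R,R]^d}(z + S^{*})$ is contained in $[-R, R+1]^d$, hence bounded. For positive Lebesgue measure, $S^{*}$ contains $g_1^{-1}(i_1(S)) \cap [0,1)^d$, which is a nonempty union of half-open cubes of side length $1/m$, so $|S^{*}| \geq |S|/m^d > 0$. Different integer translates $z + S^{*}$ lie in distinct unit cubes and are thus pairwise disjoint, so $|S_R^{*}| = |\mathbb{Z}^d \cap [-R,R]^d|\cdot |S^{*}| > 0$.

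Next, I would chain the two conditions of the conjecture to force $\rho(S_R^{*}) = \infty$. By condition (2), $\rho(S_k) \to \infty$ as $k \to \infty$. The perturbation bound in condition (1) gives
\[
\rho(S_{k, R_k}^{*}) > \rho(S_k) - \varepsilon_k \geq \rho(S_k) - \sup_{j\geq 1}\varepsilon_j,
\]
and since $\sup_j \varepsilon_j < \infty$ by hypothesis, this forces $\rho(S_{k, R_k}^{*}) \to \infty$. Hence $\limsup_{k\to\infty} \rho(S_{k, R_k}^{*}) = \infty$, and the inequality $\limsup_{k\to\infty} \rho(S_{k, R_k}^{*}) \leq \rho(S_R^{*})$ from condition (1) then gives $\rho(S_R^{*}) = \infty$. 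By the convention cited above, $L^2(S_R^{*})$ admits no exponential Riesz basis.

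There is no real technical obstacle here; the conjecture does all the heavy lifting, and the corollary is essentially a bookkeeping argument that propagates the bounded-perturbation estimate through a $\limsup$. The only point requiring care is to confirm that the Riesz-ratio formalism, developed in this paper chiefly for finite abelian groups, is interpreted in the Euclidean setting consistently with Question \ref{question:olevskii}; since the paper's introduction commits to the interpretation ``$\rho(E) = \infty$ iff no exponential Riesz basis exists'' for Euclidean $E$, and since the conjecture's hypotheses are stated entirely in terms of Riesz ratios, no separate justification is needed.
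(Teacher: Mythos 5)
Your proof is correct, and it is the evident intended argument: the paper states this corollary without supplying a proof, and your unpacking (boundedness and positive measure of $S_R^{\ast}$ from the definition of $S^{\ast}$, then chaining the perturbation bound and the $\limsup$ inequality to force $\rho(S_R^{\ast})=\infty$) is exactly the bookkeeping the authors leave implicit. No gaps.
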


\vspace{0.2in}
\noindent
Sam Ferguson, Division of Advanced Mathematics Applications, Metron, Inc., Ste 600, 1818 Library St, Reston, VA 20190, USA \\ 
\href{mailto:sjf370@nyu.edu}{sjf370@nyu.edu}

\vspace{0.2in}
\noindent
Azita Mayeli, Department of Mathematics, The City University of New York,  The Graduate Center and Queensborough, New York, NY, 10016, USA \\ 
\href{mailto:amayeli@gc.cuny.edu}{amayeli@gc.cuny.edu}
 
\vspace{.2in}
\noindent
Nat Sothanaphan \\
\href{mailto:natsothanaphan@gmail.com}{natsothanaphan@gmail.com} 

\end{document}